\newtheorem{thm}{Theorem}[section]
\newtheorem{cor}[thm]{Corollary}
\newtheorem{lem}[thm]{Lemma}
\newtheorem{prop}[thm]{Proposition}
\theoremstyle{definition}
\newtheorem{defn}{Definition}
\theoremstyle{remark}
\newtheorem{rem}{Remark}
\newtheorem{eg}{Example}
\DeclareMathOperator*{\dom}{\mathrm{dom}}
\let\im\Im
\DeclareMathOperator*{\PLUS}{\oplus} \allowdisplaybreaks
\begin{document}
\allowdisplaybreaks
\title[Isospectrality for graph Laplacians]{Isospectrality for graph Laplacians under the change of coupling at graph vertices: necessary and sufficient conditions}
\author{Yulia Ershova}
\address{Institute of Mathematics, National Academy of Sciences of Ukraine. 01601 Ukraine, Kiev-4,
3, Tereschenkivska st.} \email{julija.ershova@gmail.com}
\author{Irina I. Karpenko}
\address{Department of Algebra and Functional Analysis, V.I. Vernadsky Taurida National university. 95007 Ukraine, Autonomous Republic of Crimea, Simferopol, 4 Vernadsky pr.}
\email{i\_karpenko@ukr.net}
\author{Alexander V. Kiselev}
\address{Department of Functional Analysis, Pidstryhach Institute for Applied Problems of Mechanics and Mathematics,
National Academy of Sciences of Ukraine, 3-b Naukova Str. 79060,
L'viv, Ukraine}
\address{Department of Higher Mathematics and Mathematical Physics,
St. Petersburg State University, 1 Ulianovskaya Street, St.
Petersburg, St. Peterhoff 198504 Russia}
\email{alexander.v.kiselev@gmail.com}
\thanks{The third authors' work was partially supported by the RFBR, grant no. 12-01-00215-a.}
\subjclass[2000]{Primary 47A10; Secondary 34A55, 81Q35}

\keywords{Quantum graphs, Laplace operator, inverse spectral
problem, boundary triples, isospectral graphs}

\begin{abstract}
Laplace operators on finite compact metric graphs are considered
under the assumption that matching conditions at graph vertices
are of $\delta$ and $\delta'$ types. Assuming rational
independence of edge lengths, necessary and sufficient conditions
of isospectrality of two Laplacians defined on the same graph are
derived and scrutinized. It is proved that the spectrum of a graph
Laplacian uniquely determines matching conditions for ``almost
all'' graphs.
\end{abstract}
\maketitle

\section{Introduction}
In the present paper we study the so-called quantum graph, i.e., a
metric graph $\Gamma$  with an associated second-order
differential operator acting in Hilbert space $L^2(\Gamma)$ of
square summable functions with an additional assumption that
functions belonging to the domain of the operator are coupled by
certain matching conditions at graph vertices. These matching
conditions reflect the graph connectivity and usually are assumed
to guarantee self-adjointness of the operator. Recently these
operators have attracted a considerable interest of both
physicists and mathematicians due to a number of important
physical applications, e.g., to the study of quantum wavequides.
Extensive literature on the subject is surveyed in, e.g.,
\cite{Kuchment,Kuchment2}.

The present paper is devoted to the study of an inverse spectral
problem for Laplace  operators on compact metric graphs. The
inverse problem we have in mind is this: does the spectrum of a
graph Laplacian uniquely determine matching conditions at graph
vertices (and therefore, the operator itself)?

The related problem of whether the spectrum determines the graph
topology (in the case when matching conditions are assumed to be
standard, or Kirchhoff) has been studied extensively. To name just
a few, we would like to mention the pioneering works
\cite{Roth,Smil1,Smil2} and later contributions
\cite{Kura1,Kura2,Kura3,Kostrykin}. These papers utilize an
approach to the problem based on the so-called trace formula which
relates the spectrum of a quantum graph to the set of closed paths
on the underlying metric graph. Different approaches to the same
problem were developed, e.g., in
\cite{pivo,Belishev_tree,Belishev_cycles}, see also
\cite{NabokoKurasov} for the analysis of yet another related
problem.

On the other hand, the inverse problem we study in the present
paper has to the best of our knowledge surprisingly attracted much
less interest. We believe it was first treated in \cite{Carlson}.
In the cited paper the square of self-adjoint operator of the
first derivative was considered (thus disallowing both $\delta-$
and $\delta'-$ coupling at graph vertices) on a subset of metric
graphs. Then, after being mentioned in \cite{Kura2,Kura3}, it was
treated in \cite{Avdonin}, but only in the case of star graphs. In
our papers \cite{Yorzh1,Yorzh2,Yorzh3} we suggested an approach
based on the theory of boundary triples, leading to the asymptotic
analysis of Weyl-Titchmarsh $M$-function of the graph.

In the present paper we consider the case of a general connected
compact finite metric graph (in particular, this graph is allowed
to possess both cycles and loops), but only for two possible
classes of matching conditions at graph vertices. Namely, each
vertex is allowed to have matching of either $\delta$ or $\delta'$
type (see Section 2 for definitions). The named two classes
singled out by us prove to be physically viable \cite{Exner1,
Exner2}. We further restrict ourselves to the case when all edge
lengths of the graph $\Gamma$ are assumed to be rationally
independent. This case is known to yield the most complete results
in the inverse topology problem as well.

The results we present demonstrate that \emph{most} graphs prevent
non-trivial isospectral configurations. Moreover, isospectrality
turns out to be only possible in ``simple'' graphs (chains or
simple cycles). Nevertheless, the answer to the problem of
isospectrality is not trivial: in particular, we demonstrate that
there exists a stark difference between graph Laplacians with
matching conditions of a ``pure'' type (be it $\delta$ or
$\delta'$) and those with matching conditions of ``mixed'' type.

We point out that techniques developed in the present paper also
allow for the treatment of inverse topology problem; moreover,
consideration of Schr\"odinger operators on metric graphs is also
possible (see, e.g., \cite{Yorzh2,Yorzh3}). The corresponding
results will be published elsewhere.

The approach suggested is based on the celebrated theory of
boundary triples \cite{Gor}. The notion of a generalized
Weyl-Titchmarsh $M$-function for a properly chosen maximal
operator allows us to reduce the study of spectra of graph
Laplacians to the study of ``zeroes'' of the corresponding
finite-dimensional analytic matrix function. The results are then
obtained by asymptotic analysis.

The results of the present paper are in a sense complete, i.e.,
can be used to ascertain the absence of isospectral configurations
or otherwise for \emph{any} compact metric graph; yet at the same
time a number of meaningful questions are left open.

The paper is organized as follows.

Section 2 introduces the notation and contains a brief summary of
the material on the boundary triples used by us in the sequel as
well as an explicit formula (derived in \cite{Yorzh3}) for the
Weyl-Titchmarsh $M$-function written down in what we would like to
think of as its ``natural'' form.

In Section 3 we derive conditions necessary and sufficient for
isospectrality of a pair of graph Laplacians under the additional
restriction that all edge lengths of the graph are rationally
independent.

Finally, in Section 4 we justify the procedure of reducing the
problem of isospectrality to the same for a pair of Laplacians
defined on a ``smaller'' graph which is nothing but the original
one, ``trimmed'' in a proper way.

In our view, the main results of the paper are contained in the
following assertions:
\begin{itemize}
\item Theorem \ref{Thm_n_and_s}, which, although unsuitable for
most applications, serves not only as a cornerstone of our further
analysis, but also allows to give complete solution to the problem
of isospectrality for \emph{sufficiently simple} graphs. \item
Theorem \ref{thm_uniqueness_nonzero} ascertains that any
non-trivial graph with all edges (except loops) of mixed type,
i.e., with any two adjacent vertices having different ($\delta$
and $\delta'$) types of coupling, disallows isospectral
configurations of coupling constants provided that the latter are
all assumed to be non-zero. \item Theorem \ref{Thm_bloody} then
provides the ultimate answer to the problem of isospectrality for
graphs with all edges of mixed type, stating that if the graph is
clean \cite{Aharonov} and the coupling constants \emph{are}
allowed to zero out, the only non-trivial graph permitting
isospectral configurations is the chain $A_3$. \item Theorem
\ref{thm_trimming} tells one that under the assumption that two
graph Laplacians are isospectral on a graph $\Gamma$, the
corresponding pair of Laplacians has to be isospectral on a
smaller (``trimmed'', see Definition \ref{def_trimming}) graph
$\Gamma^{(e)}$ with both sets of coupling constants changing in a
controllable way. This result shows that for an arbitrary graph
one can indeed reduce the study of isospectrality phenomenon to
the same on either an arbitrarily simple graph (if all graph
vertices have the same type of coupling) \emph{or} a graph with
all edges of mixed type, the answer in both cases being either
known or easy to obtain.

\item Finally, the balancing conditions \eqref{eq_balance_general}
of Theorem \ref{thm_uniqueness_nonzero} and the assertion of
Theorem \ref{Thm_uniqueness_zero} can be viewed as ``local''
uniqueness results. In particular, Theorem
\ref{Thm_uniqueness_zero} ascertains that on a clean graph
$\Gamma$ isospectrality of two graph Laplacians immediately yields
uniqueness of coupling constants at all vertices, adjacent to the
one where coupling constants of both Laplacians are known to zero
out simultaneously; the balancing conditions
\eqref{eq_balance_general} guarantee in a sense the converse,
allowing to ascertain the uniqueness of coupling constants at a
given vertex based on equality of coupling constants at adjacent
vertices.
\end{itemize}

\section{Preliminaries}

We call $\Gamma=\Gamma(\mathbf{E_\Gamma},\sigma)$ a finite compact
metric graph, if it is a collection of a finite non-empty set
$\mathbf{E_\Gamma}$ of compact intervals $e_j=[x_{2j-1},x_{2j}]$,
$j=1,2,\ldots, n$, called \emph{edges}, and of a partition
$\sigma$ of the set of endpoints $\{x_k\}_{k=1}^{2n}$ into $N$
classes, $\mathbf{V_\Gamma}=\bigcup^N_{m=1} V_m$. The equivalence
classes
 $V_m$, $m=1,2,\ldots,N$ will be called \emph{vertices} and the number of elements belonging to the set $V_m$ will be called the \emph{valence} of the vertex
$V_m$ (denoted $\deg V_m$).

With a finite compact metric graph $\Gamma$ we associate Hilbert
spaces
$$L_2(\Gamma)=\PLUS_{j=1}^n L_2(\Delta_j) \mbox{ and }\ W_2^2(\Gamma)=\oplus_{j=1}^n W_2^2(\Delta_j).$$
These spaces obviously do not ``feel'' the graph connectivity,
being the same for each graph with the same number of edges of
same lengths.

In what follows, we single out two natural \cite{Exner1} classes
of so-called \emph{matching conditions} which lead to a correctly
defined self-adjoint operator on the graph $\Gamma$, namely,
matching conditions of $\delta$ and $\delta'$ types. In order to
describe these, we introduce the following notation. For a
function $f\in W_2^2(\Gamma)$, we will use  the following
definition of the normal derivative $\partial_nf(x_{j})$ at the
endpoints of an interval $e_k$ throughout:
\begin{equation*}\label{fdnfx}
\partial_n f(x_j)=\left\{ \begin{array}{rl} f'(x_j),&\mbox{ if } x_j \mbox{ is the left endpoint of an edge},\\
-f'(x_j),&\mbox{ if } x_j \mbox{ is the right endpoint of an
edge.}
\end{array}\right.
\end{equation*}

Associate either of two symbols, $\delta$ or $\delta'$, to each
vertex of the graph $\Gamma$. The graph thus obtained will be
referred to as \textbf{marked} and denoted $\Gamma_{\delta}$. A
vertex $V$ will be called \emph{a vertex of $\delta$ type}, if $V$
is marked by the symbol $\delta$, and \emph{a vertex of $\delta'$
type} in the opposite case. Any marked graph $\Gamma_{\delta}$
determines the lineal
$$
\mathcal{D}(\Gamma_{\delta}):=\left\{f\in W_2^2(\Gamma)\ \left|\
                                                 \begin{array}{l}
                                                   f\ \mbox{is continuous at all}\\ \mbox{internal vertices of }\ \delta \mbox{ type}, \\
                                                   \partial_nf(x_i)=\partial_nf(x_j) \forall i,j: x_i,x_j\in V \mbox{ at all}\\ \mbox{internal vertices $V$ of }\  \delta' \mbox{ type} \\
                                                 \end{array}                                          \right. \right\}.
$$
Note that the latter definition imposes no restrictions on the
functions from $\mathcal{D}(\Gamma_{\delta})$ at boundary vertices
of the graph, i.e., at vertices of valence 1. For reasons of
convenience, we refer to all graph vertices of higher valence as
\emph{internal vertices} throughout.

We are now all set to define the Laplacian $A_{\vec\alpha}$ on the
graph $\Gamma_{\delta}$ which is the operator of the negative
second derivative on functions from
$f\in\mathcal{D}(\Gamma_{\delta})$ subject to the following
additional \emph{matching conditions}.
\begin{itemize}
\item[\textbf{($\delta$)}] If $V_k$ is of $\delta$ type, then
$$
\sum_{x_j \in V_k} \partial _n f(x_j)={\alpha_k} f(V_k).
$$
\item[\textbf{($\delta'$)}] If $V_k$ is of $\delta'$ type, then
$$
\sum_{x_j \in V_k} f(x_j)=-{\alpha_k}\partial_n f(V_k).
$$
\end{itemize}
Here $\vec\alpha=(\alpha_1,\alpha_2,...\alpha_N)$ is a set of
arbitrary real constants which we will refer to as \emph{coupling
constants}, whereas $f(V_k)$ and $\partial_n f(V_k)$ are
well-defined on $\mathcal D(\Gamma_{\delta})$ at vertices of
$\delta$ and $\delta'$ type, respectively.

Provided that all coupling constants $\alpha_m$, $m=1\dots N$, are
real, the operator $A_{\vec{\alpha}}$ is self-adjoint in  Hilbert
space $L_2(\Gamma)$ \cite{Exner1,KostrykinS}. Throughout the
present paper, we are going to consider this self-adjoint
situation only, although it has to be noted that the approach
developed can be used for the purpose of analysis of the general
non-self-adjoint situation as well.

Clearly, the self-adjoint operator thus defined on a finite
compact metric graph has purely discrete spectrum that accumulates
to $+\infty$. In order to ascertain this, one only has to note
that the operator considered is a finite-dimensional perturbation
in the resolvent sense of the direct sum of Sturm-Liouville
operators on individual edges.

Note that w.l.o.g. each edge $e_j$ of the graph $\Gamma$ can be
considered to be an interval $[0,l_j]$, where
$l_j=x_{2j}-x_{2j-1}$, $j=1\dots n$ is the length of the
corresponding edge.  Throughout the present paper we will
therefore only consider this situation.

The analysis presented in the present paper is essentially based
on the theory of boundary triples \cite{Gor,Ko1,Koch,DM} applied
to the class of operators introduced above. Two fundamental
concepts of this theory which we will recall below are the
concepts of a boundary triple and of the Titchmarsh-Weyl
generalized matrix-function. Assume that $A_{\min}$ is a symmetric
densely defined operator in Hilbert space $H$, and that its
deficiency indices are equal. Put $A_{\max}:=A_{\min}^*$.

\begin{defn}[\cite{Gor,Ko1,DM}]\label{Def_BoundTrip}
Let $\Gamma_0,\ \Gamma_1$ be linear mappings of $\dom(A_{\max})$
to $\mathcal{H}$ which is a separable Hilbert space. The triple
$(\mathcal{H}, \Gamma_0,\Gamma_1)$ is called \emph{a boundary
triple} for the operator $A_{\max}$ if:
\begin{enumerate}
\item for all $f,g\in \dom(A_{\max})$
$$
(A_{\max} f,g)_H -(f, A_{\max} g)_H = (\Gamma_1 f, \Gamma_0
g)_{\mathcal{H}}-(\Gamma_0 f, \Gamma_1 g)_{\mathcal{H}}.
$$
\item the mapping $\gamma$ defined as $f\longmapsto (\Gamma_0 f;
\Gamma_1 f),$ $f\in \dom(A_{\max})$ is surjective, i.e., for all
$Y_0,Y_1\in\mathcal{H}$ there exists an element $y\in
\dom(A_{\max})$ such that $\Gamma_0 y=Y_0,\ \Gamma_1 y =Y_1.$
\end{enumerate}

A non-trivial extension ${A}_B$ of the operator $A_{\min}$ such
that $A_{\min}\subset  A_B\subset A_{\max}$  is called
\emph{almost solvable} if there exists a boundary triple
$(\mathcal{H}, \Gamma_0,\Gamma_1)$ for $A_{\max}$ and a bounded
linear operator $B$ defined on $\mathcal{H}$ such that for every
$f\in \dom(A_{\max})$
$$
f\in \dom({A_B})\text{ if and only if } \Gamma_1 f=B\Gamma_0 f.
$$

The operator-function $M(\lambda),$ defined by
\begin{equation*}\label{Eq_Func_Weyl}
M(\lambda)\Gamma_0 f_{\lambda}=\Gamma_1 f_{\lambda}, \
f_{\lambda}\in \ker (A_{\max}-\lambda),\  \lambda\in
\mathbb{C}_\pm,
\end{equation*}
is called the Weyl-Titchmarsh $M$-function of the operator
$A_{\max}$ w.r.t. the corresponding boundary triple.
\end{defn}

The property of the Weyl-Titchmarsh $M$-function that makes it the
tool of choice for the analysis of isospectral Laplacians on
graphs can be formulated \cite{RyzhovOTAA} in the following way:
provided that $A_B$ is an almost solvable extension of a
simple\footnote{I.e., there exists no reducing subspace $H_0$ such
that the restriction $A_{\min}|H_0$ is a selfadjoint operator in
$H_0.$} symmetric operator $A_{\min}$, $\lambda_0\in \rho(A_B)$ if
and only if $(B-M(\lambda))^{-1}$ admits analytic continuation
into the point $\lambda_0$.

In \cite{Yorzh3}, we have obtained the following
\begin{prop}[\cite{Yorzh3}]\label{Prop_M}
Let $\Gamma_{\delta}$ be a marked compact metric graph. There
exists a closed densely defined symmetric operator $A_{\min}$ and
a boundary triple such that the operator $A_{\vec{\alpha}}$ is an
almost solvable extension of $A_{\min}$, for which the
parameterizing matrix $B$ is nothing but
$\mathrm{diag}\{\alpha_1,\dots,\alpha_N\}$, whereas the
generalized Weyl-Titchmarsh $M$-function is a $N\times N$ matrix
with matrix elements given by the following formula for a vertex
$V_k$ of $\delta$ type: $m_{jk}(\lambda)=$
\begin{equation}\label{Eq_Weyl_Func_Delta}
\begin{cases}\scriptsize
-\mu\Bigl(\sum_{e_t\in E_k}\cot\mu l_t-\sum_{e_t\in
E'_k}\tan\mu l_t-    \\
-2\sum_{e_t\in L_k}\tan\frac{\mu l_t}{2}\Bigr),
            & j=k,\\
\mu\sum_{e_t\in C_{kj}}\frac{1}{\sin\mu l_t},
            & j\not=k,\  V_j \mbox{ is a vertex of }\\
            & \delta\mbox{ type adjacent to}\ V_k,\\
-\sum_{e_t\in C'_{kj}}\frac{1}{\cos\mu l_t},
            & j\not=k,\  V_j \mbox{ is a vertex of }\\
            & \delta'\mbox{ type adjacent to }\ V_k,\\
            0,
            & j\not=k,\  V_j \mbox{ is a vertex }\\
            & \mbox{ not adjacent to }\ V_k,\\
     \end{cases}
\end{equation}
and by the following formula for a vertex $V_k$ of $\delta'$ type:
$m_{jk}(\lambda)=$
\begin{equation}\label{Eq_Weyl_Func_Delta'}
\begin{cases}
-\frac{1}{\mu}\Bigl(\sum_{e_t\in E_k}\cot\mu l_t-\sum_{e_t\in
E'_k}\tan\mu
l_t+\\
+2\sum_{e_t\in L_k}\cot\frac{\mu
 l_t}{2}\Bigr),  & j=k,\\
-\sum_{e_t\in C'_{kj}}\frac{1}{\cos\mu l_t},
            & j\not=k,\  V_j \mbox{ is a vertex of }\\
            & \delta\mbox{ type adjacent to }\ V_k,\\
-\frac{1}{\mu}\sum_{e_t\in C_{kj}}\frac{1}{\sin\mu l_t},
            & j\not=k,\  V_j  \mbox{ is a vertex of }\\
            & \delta'\mbox{ type adjacent to }\ V_k,\\
            0,
            & j\not=k,\  V_j \mbox{ is a vertex}\\
            & \mbox{not adjacent to }\ V_k.\\
     \end{cases}
\end{equation}
Here $\mu=\sqrt{\lambda}$ (the branch such that $\im\mu\geq 0$),
$l_t$ is the length of $e_t$, $L_k$ is the set of loops at the
vertex $V_k,$ $E_k$ is the set of graph edges incident to the
vertex $V_k$ with both endpoints of the same type,  $E'_k$ is the
set of graph edges incident to the vertex $V_k$ with endpoints of
different types, $C_{kj}$ is the set of graph edges connecting
vertices $V_k$ and $V_j$ of the same type, and finally, $C'_{kj}$
is the set of graph edges connecting vertices  $V_k$ and $V_j$ of
different types.
\end{prop}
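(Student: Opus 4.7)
The plan is to start by choosing an appropriate minimal symmetric operator $A_{\min}$ on the graph, then verify that specific formulas for $\Gamma_0$ and $\Gamma_1$ yield a boundary triple whose almost solvable extension associated with $B=\mathrm{diag}(\alpha_1,\ldots,\alpha_N)$ coincides with $A_{\vec\alpha}$, and finally compute the $M$-function by solving the edge ODEs separately on each edge.

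First I would take $A_{\min}$ to be the restriction of $-d^2/dx^2$ to those $f\in\mathcal{D}(\Gamma_{\delta})$ that additionally satisfy $f(V_k)=0$ and $\sum_{x_j\in V_k}\partial_n f(x_j)=0$ at every $\delta$-type vertex $V_k$, and $\partial_n f(V_k)=0$ and $\sum_{x_j\in V_k} f(x_j)=0$ at every $\delta'$-type vertex. This is closed, densely defined and symmetric with equal deficiency indices $(N,N)$, and its adjoint is just $-d^2/dx^2$ on all of $\mathcal{D}(\Gamma_{\delta})$. Setting $\mathcal{H}=\mathbb{C}^N$, define at each vertex $V_k$ of $\delta$ type $(\Gamma_0 f)_k := f(V_k)$ and $(\Gamma_1 f)_k := \sum_{x_j\in V_k}\partial_n f(x_j)$, and at each vertex $V_k$ of $\delta'$ type $(\Gamma_0 f)_k := \partial_n f(V_k)$, $(\Gamma_1 f)_k := -\sum_{x_j\in V_k} f(x_j)$; the sign chosen at $\delta'$ vertices is exactly what is needed so that the condition $\Gamma_1 f = B\Gamma_0 f$ with $B=\mathrm{diag}(\alpha_k)$ reproduces both families of matching conditions in the statement. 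The Lagrange identity then follows at once from integration by parts on each edge: the boundary term at an endpoint $x_j$ equals $\partial_n f(x_j) g(x_j) - f(x_j)\partial_n g(x_j)$, and grouping these by vertex---using continuity at $\delta$ vertices and the equality of normal derivatives at $\delta'$ vertices built into $\mathcal{D}(\Gamma_{\delta})$---reduces the total to $(\Gamma_1 f,\Gamma_0 g)-(\Gamma_0 f,\Gamma_1 g)$. Surjectivity of $(\Gamma_0,\Gamma_1)\colon\dom(A_{\max})\to\mathbb{C}^N\oplus\mathbb{C}^N$ is a standard construction with bump functions supported near each vertex.

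With the boundary triple in hand, the $M$-function is computed edge-wise. Any $f_\lambda\in\ker(A_{\max}-\lambda)$ solves $-u''=\lambda u$ independently on each edge $e_t=[0,l_t]$, so it suffices to parameterize the data by the vector $\Gamma_0 f_\lambda$ and collect the contributions to $\Gamma_1 f_\lambda$ from each edge at each of its endpoints. On a generic edge with endpoints $V_a$ at $x=0$ and $V_b$ at $x=l_t$, writing $u(x)=A\cos(\mu x)+B\sin(\mu x)$ and imposing the two prescribed boundary values from $\Gamma_0 f_\lambda$ (a function value at a $\delta$ endpoint, a normal derivative at a $\delta'$ endpoint) determines $A,B$ uniquely, and the required contributions follow by differentiation or direct evaluation. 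Four configurations arise: a $\delta$--$\delta$ edge produces $-\mu\cot(\mu l_t)$ on the diagonal and $\mu/\sin(\mu l_t)$ off-diagonal; a $\delta'$--$\delta'$ edge produces $-\cot(\mu l_t)/\mu$ and $-1/(\mu\sin(\mu l_t))$; a mixed $\delta$--$\delta'$ edge contributes $\mu\tan(\mu l_t)$ on the $\delta$ diagonal, $\tan(\mu l_t)/\mu$ on the $\delta'$ diagonal, and $-1/\cos(\mu l_t)$ off-diagonal on both sides; finally, a loop is the case in which the two boundary data from $\Gamma_0 f_\lambda$ are forced to coincide, and the half-angle identities $1+\cos\theta=2\cos^2(\theta/2)$ and $\sin\theta=2\sin(\theta/2)\cos(\theta/2)$ give rise to the $\tan(\mu l_t/2)$ term (at $\delta$ loops) or $\cot(\mu l_t/2)$ term (at $\delta'$ loops), the factor $2$ reflecting summation over the loop's two endpoints. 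Summing the edge contributions at each vertex reproduces exactly the formulas \eqref{Eq_Weyl_Func_Delta} and \eqref{Eq_Weyl_Func_Delta'}.

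The main obstacle is not conceptual but organizational: tracking signs and multiplicities consistently across four edge-type cases and two vertex-type cases, and verifying that the half-angle simplifications at loops correctly produce the factor $2$ and the change from $\mu l_t$ to $\mu l_t/2$ in the argument of the trigonometric functions. Since the result is attributed to \cite{Yorzh3}, it is enough to work out one representative case in each of the four edge configurations above, with all remaining verifications being entirely parallel.
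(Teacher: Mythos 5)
The paper does not actually prove this proposition---it is imported verbatim from \cite{Yorzh3}---so there is no in-text argument to compare against; your proposal supplies the standard derivation and is correct. Specifically, your choice of $A_{\min}$ (cutting down $\mathcal D(\Gamma_\delta)$ by the vanishing of both the $\Gamma_0$- and $\Gamma_1$-data at every vertex), the vertex-wise boundary maps with the sign flip at $\delta'$ vertices so that $\Gamma_1f=B\Gamma_0f$ reproduces both matching conditions, and the edge-by-edge solution of $-u''=\lambda u$ all check out: the four edge-type contributions you list ($-\mu\cot\mu l_t$ and $\mu\csc\mu l_t$ for $\delta$--$\delta$, $-\mu^{-1}\cot\mu l_t$ and $-\mu^{-1}\csc\mu l_t$ for $\delta'$--$\delta'$, $\mu\tan\mu l_t$, $\mu^{-1}\tan\mu l_t$ and $-\sec\mu l_t$ for mixed edges, and the doubled half-angle terms at loops) agree exactly with \eqref{Eq_Weyl_Func_Delta} and \eqref{Eq_Weyl_Func_Delta'}.
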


This explicit description of the $M$-function w.r.t. the boundary
triple that we are inclined to consider as the natural one has
allowed us to prove necessary conditions of quantum graphs
isospectrality. In order to achieve this goal, one has to look as
the asymptotic expansion of the M-function as $\lambda\to-\infty$
along the real line, see details in \cite{Yorzh3}.

\section{Necessary and sufficient conditions of isospectrality}

In the present Section we will obtain necessary and sufficient
conditions of isospectrality provided that edge lengths of the
graph are assumed to be rationally independent.

In what follows we will utilize the following three rather
straightforward lemmata.

\begin{lem}\label{Lemma_entire}
Let $A_{\vec{\alpha}}$ be a Laplacian on the marked graph
$\Gamma_{\delta}$. If all edge lengths of the graph
$\Gamma_\delta$ are rationally independent, the function
$\Pi(\lambda)\det(M(\lambda)-B)$ with
$B=\mathrm{diag}\{\alpha_1,\dots,\alpha_N\}$ and
\begin{equation}\label{Eq_Pi}
\Pi(\lambda)=\lambda^\rho\prod_{e_j\in \mathcal C}\frac{\sin l_j
\sqrt{\lambda}}{\sqrt{\lambda}} \prod_{e_j\in \mathcal C'}{\cos
l_j \sqrt{\lambda}}\prod_{e_j\in \mathcal L}{\cos l_j
\frac{\sqrt{\lambda}}2}\prod_{e_j\in \mathcal L'}\frac{\sin l_j
\frac{\sqrt{\lambda}}2}{\sqrt{\lambda}}
\end{equation}
is an entire function of exponential type and order not greater
than $1/2$ in $\mathbb C$, if $\rho$ is the number of vertices of
$\delta'$ type connected in $\Gamma_\delta$ with at least one
vertex of this same type (either via an edge or a loop).

Here $\mathcal C$ ($\mathcal C'$) is the set of edges connecting
vertices of same type (of different types, respectively);
$\mathcal L$ ($\mathcal L'$) is the set of loops attached to
vertices of type $\delta$ (of type $\delta'$, respectively).
\end{lem}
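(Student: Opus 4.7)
The plan is to prove entirety by localizing the singularities of $\det(M(\lambda)-B)$ and matching them to the zeros of $\Pi(\lambda)$, then to obtain the growth bound from elementary estimates on trigonometric functions. Each entry of $M(\lambda)$ is meromorphic by \eqref{Eq_Weyl_Func_Delta}--\eqref{Eq_Weyl_Func_Delta'}, with possible poles only at zeros of $\sin(\mu l_t)$, $\cos(\mu l_t)$, $\cos(\mu l_t/2)$, or $\sin(\mu l_t/2)$ for some edge $e_t$, and at $\lambda=0$ due to the $1/\mu$ prefactor in rows indexed by $\delta'$-type vertices. Rational independence of edge lengths guarantees that at any nonzero $\lambda_0$ where such a trigonometric factor vanishes, exactly one edge $e_t$ is responsible---otherwise the lengths involved would have to be in rational ratio. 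Consequently the singular entries of $M-B$ at $\lambda_0$ are confined to the $2\times 2$ principal submatrix indexed by the endpoints of that single edge $e_t$, or to the single diagonal entry if $e_t$ is a loop.

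Next I would check case by case that the residue matrix at $\lambda_0$ is of rank one. For example, at a zero of $\sin(\mu l_t)$ with $e_t\in\mathcal{C}$ joining two $\delta$-type vertices $V_i,V_j$, writing $\mu l_t = \pi n + \varepsilon$ and expanding shows the singular $2\times 2$ block equals
$$
\frac{\mu_0 l_t}{\varepsilon}\begin{pmatrix} -1 & (-1)^n \\ (-1)^n & -1 \end{pmatrix},
$$
whose determinant is $1-(-1)^{2n}=0$. The same rank-one structure is verified for two $\delta'$-type endpoints (with $-1/\mu$ prefactors throughout), for mixed endpoints (where the product $\mu\cdot(1/\mu)=1$ still cancels the $(-1)^{2n}$), and trivially for loops (where the singular part is concentrated in a single diagonal entry). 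Writing $M-B = A_{\mathrm{reg}} + (\lambda-\lambda_0)^{-1}\,u v^\top$ and invoking the matrix determinant identity
$$
\det\bigl(A_{\mathrm{reg}} + (\lambda-\lambda_0)^{-1} u v^\top\bigr) = \det A_{\mathrm{reg}} + (\lambda-\lambda_0)^{-1}\, v^\top \mathrm{adj}(A_{\mathrm{reg}})\, u,
$$
one concludes that $\det(M-B)$ has at most a simple pole at $\lambda_0$, which is cancelled exactly by the simple zero of the corresponding factor of $\Pi$: $\sin(l_t\sqrt\lambda)/\sqrt\lambda$ for $e_t\in\mathcal{C}$, $\cos(l_t\sqrt\lambda)$ for $e_t\in\mathcal{C}'$, and analogously for the loop factors in $\mathcal{L}$ and $\mathcal{L}'$.

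For the singularity at $\lambda=0$, a direct Taylor expansion in $\mu$ reveals that precisely those rows indexed by $\delta'$-type vertices having at least one $\delta'$-type neighbour (via an edge or a loop) carry a $\mu^{-2}$ contribution, while all other rows remain bounded; hence $\det(M-B)$ has a pole of order at most $\rho$ at $\lambda=0$, cancelled by $\lambda^\rho$. This establishes entirety of $\Pi\det(M-B)$. The order and type bound is then immediate: each factor of $\Pi$ is entire of order $1/2$ in $\lambda$ and exponential type at most $l_t$ in $\mu=\sqrt\lambda$, whereas the entries of $M-B$ grow at most polynomially in $\mu$ off the real axis (since $\cot,\tan$ are bounded there while $1/\sin,1/\cos$ decay exponentially); the product is therefore an entire function of order $\leq 1/2$ in $\lambda$ and exponential type bounded by $\sum_j l_j$ in $\mu$. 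The main obstacle is the case analysis yielding the rank-one residue structure, most notably for mixed-type ($\mathcal{C}'$) edges where one row carries the prefactor $\mu$ and the other $1/\mu$, so that the algebraic cancellation underlying the rank-one degeneracy is less transparent than in the pure $\delta$-$\delta$ case; the bookkeeping at $\lambda=0$ is conceptually similar but requires one to keep track of how many of the simultaneously-singular rows contribute to any given term of the determinant.
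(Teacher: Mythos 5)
Your proof is correct, and it reaches the paper's conclusion by a genuinely different packaging of the key cancellation. Where the paper performs an explicit row operation (add the $j$-th row multiplied by $\cos\mu l_t$ to the $k$-th one, which regularizes row $k$, then factor $1/\sin\mu l_t$ out of row $j$), you isolate the reason that operation works: by rational independence each candidate pole is owned by a single edge, the residue of $M(\lambda)$ there is a rank-one matrix supported on the $2\times2$ block of that edge's endpoints (your case check of $r_1r_2-r_3^2=0$, including the mixed case where the $\mu$ and $\mu^{-1}$ prefactors conspire, is exactly right), and the matrix determinant lemma then caps the pole of $\det(M-B)$ at order one, which the matching simple zero of the corresponding factor of $\Pi$ removes. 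The two arguments are equivalent in substance -- the row operation is Gaussian elimination adapted to the rank-one residue -- but yours is more systematic and makes the degeneracy structurally visible rather than a computational accident. Your treatment of $\lambda=0$ by counting the $\rho$ rows of $\delta'$ vertices with a same-type neighbour that carry a $\mu^{-2}$ singularity is also what the paper intends but does not spell out. The one point where you are looser than you should be is the growth estimate: polynomial boundedness of the entries of $M-B$ \emph{off} the real $\mu$-axis does not by itself bound an entire function everywhere, since the excluded set is a neighbourhood of the positive real $\lambda$-axis where the poles accumulate. You need either a Phragm\'en--Lindel\"of argument in that narrow sector (harmless, since the order $1/2$ is far below the critical order of a thin sector), or the paper's shortcut: write $\Pi(\lambda)\det(M(\lambda)-B)$ as a quotient of two entire functions of order $\le 1/2$ and finite exponential type and invoke the corresponding theorem from Levin's book. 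This is a one-line repair rather than a gap in the strategy.
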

\begin{proof}
Due to Proposition \ref{Prop_M} it suffices to prove that the
function in question has no poles in $\mathbb C$. Then the fact
that it has claimed exponential type and order follows immediately
from \cite{Levin} since it can be represented as a fraction of two
entire functions of this same order.

From Proposition \ref{Prop_M} we deduce that poles could only be
located at zeroes of the function $\Pi(\lambda)$. Consider the
possibilities.

(i) Let the edge $e_t$ of length $l_t$ connect vertices $V_k$ and
$V_j$ both of type  $\delta$. Then by \eqref{Eq_Weyl_Func_Delta}
the entries $m_{kk},\ m_{jj}$ of
 $M(\lambda)$ contain the summand $-\mu\cot\mu l_t$ corresponding to $e_t$, whereas the entries $m_{kj},\ m_{jk}$ contain
 $\mu\csc\mu l_t$. Moreover, no other entries of
$M(\lambda)$ contain trigonometric functions of the same argument.
 Add the $j$th row multiplied by $\cos\mu l_t$ to the $k$th one.
Then in $m_{kj}$ the term
 $\mu\csc\mu l_t$ admits the form  $\mu\sin\mu l_t$,
whereas the term  $-\mu\cot\mu l_t$ in $m_{kk}$ cancels out.
Factoring out $\frac{1}{\sin\mu l_t}$ from the $j$th row we see
that the function $\Pi(\lambda)\det(M(\lambda)-B)$ has no poles
associated with zeroes of ${\sin\mu l_t}$.

(ii) If the edge $e_t$ of length $l_t$ connects vertices
 $V_k$ and $V_j$ both of type $\delta'$, the same argument
 as in (i) above applies.

(iii) Let the edge $e_t$ of length $l_t$ connect vertices
 $V_k$ and $V_j$ having $\delta$ and $\delta'$ types, respectively. In this case by \eqref{Eq_Weyl_Func_Delta'}  $m_{kk}$ contains the summand $\mu\tan\mu
l_t$ and $m_{jj}$ contains $\frac{1}{\mu}\tan\mu l_t$,
corresponding to this edge, whereas the entries $m_{kj},\ m_{jk}$
contain the term $-\sec\mu l_t$. Moreover, no other matrix
elements of $M(\lambda)$ contain trigonometric functions of the
same argument.  Add the $j$th row multiplied by $\mu\cos\mu l_t$
to the $k$th one. Then in $m_{kj}$ the term
 $-\sec\mu l_t$ admits the form  $-\cos\mu l_t$,
whereas the term  $\mu\tan\mu l_t$ in $m_{kk}$ cancels out.
Factoring out $\frac{1}{\cos\mu l_t}$ from the $j$th row we see
that the function $\Pi(\lambda)\det(M(\lambda)-B)$ has no poles
associated with zeroes of ${\cos\mu l_t}$.

(iv) The case of loops attached to vertices of either $\delta$ or
$\delta'$ type is of course trivial, see
\eqref{Eq_Weyl_Func_Delta} and \eqref{Eq_Weyl_Func_Delta'}.

Finally, as it is easily seen, the factor $\lambda^\rho$ ensures
that the function in question has no pole at zero, cf. Theorem
\ref{Thm_det} below.
\end{proof}

\begin{lem}\label{Lemma_ness_and_suff}
Assume that $A_{\vec{\alpha}}$ and $A_{\vec{\widetilde{\alpha}}}$
are two Laplacians on the graph $\Gamma_{\delta}$. Assume that all
edge lengths are rationally independent. These operators are
isospectral if and only if the numbers of zero coupling constants
at vertices of $\delta'$ type are equal in $\vec\alpha$ and
$\vec{\tilde\alpha}$ and
\begin{equation}\label{eq_lemma_ness_and_suff}
\Phi(\vec\alpha)\Pi(\lambda)\det(M(\lambda)-B)\equiv
\Phi(\vec{\tilde\alpha}) \Pi(\lambda)\det(M(\lambda)-\tilde B)
\text{ for all } \lambda\in\mathbb C,
\end{equation}
where $B=\mathrm{diag}\{\alpha_1,\dots,\alpha_N\}$ and $\tilde
B=\mathrm{diag}\{\tilde\alpha_1,\dots,\tilde\alpha_N\}$;
\begin{equation}\label{eq_def_phi}
\Phi(\vec\alpha):=\prod_{V_j\text{ of }\delta'\text{ type:
}\alpha_j\not=0}\frac {\deg V_j}{\alpha_j}.
\end{equation}
\end{lem}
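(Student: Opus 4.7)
The plan is to combine the spectral characterization afforded by boundary triples with Hadamard's factorization theorem for entire functions of finite order. Thanks to Lemma \ref{Lemma_entire}, for every choice of coupling constants the function
\[
F_{\vec\alpha}(\lambda):=\Pi(\lambda)\det(M(\lambda)-B)
\]
is entire of order at most $1/2$. From the boundary-triples criterion recalled above, $\lambda_0\in\sigma(A_{\vec\alpha})$ exactly when $(B-M(\lambda))^{-1}$ fails to continue analytically through $\lambda_0$, and the first step is to verify that the zeros of $F_{\vec\alpha}$, counted with multiplicities, coincide with the eigenvalues of $A_{\vec\alpha}$, counted with multiplicities. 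Away from zeros of $\Pi$ this is routine: $\det(M-B)$ and $F_{\vec\alpha}$ have the same zero at $\lambda_0$. At a zero of $\Pi$ one uses the row/column manipulation already employed in the proof of Lemma \ref{Lemma_entire}: the simple zero of $\Pi$ kills a simple pole of the relevant minor, so that a zero of $F_{\vec\alpha}$ is produced there precisely when the cleaned-up matrix $M(\lambda_0)-B$ is singular.

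Once this identification is in hand, the necessity direction proceeds as follows. If the two operators are isospectral, then $F_{\vec\alpha}$ and $F_{\vec{\tilde\alpha}}$ are two entire functions of order at most $1/2$ with identical zero sets (multiplicities included). Their ratio is therefore an entire, nowhere-vanishing function of order at most $1/2$, which by Hadamard's theorem must reduce to a non-zero constant $c$. To identify $c$, I would examine the asymptotics of $F_{\vec\alpha}(\lambda)$ as $\mu\to i\infty$. In this regime every off-diagonal entry of $M$ decays exponentially, while the diagonal entries tend to $i\mu\deg V_k$ at vertices of $\delta$ type (since $\cot\mu l\to-i$, $\tan\mu l\to i$ combine with $|E_k|+|E'_k|+2|L_k|=\deg V_k$) and to $(i/\mu)\deg V_k$ at vertices of $\delta'$ type. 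Hence
\[
\det(M-B)\sim\!\!\prod_{V_k\text{ of }\delta}\!\!(i\mu\deg V_k)\ \prod_{V_k\text{ of }\delta',\,\alpha_k\neq 0}\!\!(-\alpha_k)\ \prod_{V_k\text{ of }\delta',\,\alpha_k=0}\!\!\frac{i\deg V_k}{\mu}.
\]

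Multiplying through by $\Phi(\vec\alpha)$ turns every factor $-\alpha_k$ into $-\deg V_k$, so the leading asymptotic of $\Phi(\vec\alpha)F_{\vec\alpha}(\lambda)$ depends on $\vec\alpha$ only through the number of $\delta'$-vertices at which the coupling constant vanishes, while its order in $\mu$ equals $N_\delta-N_{\delta',0}$. For the ratio $F_{\vec\alpha}/F_{\vec{\tilde\alpha}}$ to be a non-zero constant these orders must coincide, which forces the two counts of vanishing $\delta'$-couplings to be equal; once they agree, the asymptotic leading coefficients of $\Phi(\vec\alpha)F_{\vec\alpha}$ and $\Phi(\vec{\tilde\alpha})F_{\vec{\tilde\alpha}}$ match, so that $c=\Phi(\vec{\tilde\alpha})/\Phi(\vec\alpha)$, which is exactly \eqref{eq_lemma_ness_and_suff}. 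The sufficiency direction is then immediate: \eqref{eq_lemma_ness_and_suff} forces the zero sets of $F_{\vec\alpha}$ and $F_{\vec{\tilde\alpha}}$ to coincide with multiplicities, and by the first step the spectra of the two operators therefore coincide.

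The main obstacle I anticipate is the bookkeeping in the first step, namely matching the multiplicity of each zero of $F_{\vec\alpha}$ with the geometric (equivalently algebraic, since $A_{\vec\alpha}$ is self-adjoint) multiplicity of the corresponding eigenvalue, with due care at zeros of $\Pi$ and at $\lambda=0$ (which is why the factor $\lambda^\rho$ appears in $\Pi$). All of this is essentially local and can be handled by a careful inspection of the rank of $M(\lambda_0)-B$ after the cleaning procedure used in Lemma \ref{Lemma_entire}; everything that follows is genuinely a \emph{global} statement which reduces, via Hadamard's theorem and the asymptotic computation above, to a transparent algebraic identity.
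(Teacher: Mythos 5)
Your overall strategy---identify the zeros of $F_{\vec\alpha}=\Pi\det(M-B)$ with the spectrum, apply Hadamard's theorem to the ratio of the two entire functions, and pin down the constant via the $\mu\to i\infty$ asymptotics of the diagonal entries of $M$---is essentially the route the paper takes (the asymptotic computation, including $c=\Phi(\vec{\tilde\alpha})/\Phi(\vec\alpha)$ and the equality of the numbers of vanishing $\delta'$-couplings, is exactly the content the paper delegates to \cite{Yorzh3}). There is, however, a genuine gap in your ``first step'': the claim that the zeros of $F_{\vec\alpha}$, with multiplicities, coincide with the eigenvalues of $A_{\vec\alpha}$ is \emph{false} for graphs with loops, which the lemma explicitly allows (note the loop factors $\mathcal L$, $\mathcal L'$ in $\Pi$). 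The boundary-triple criterion you invoke---$\lambda_0\in\rho(A_B)$ iff $(B-M(\lambda))^{-1}$ continues analytically into $\lambda_0$---is stated only for a \emph{simple} $A_{\min}$, and with a loop of length $l$ attached to a $\delta$-type vertex $V$ the operator $A_{\min}$ is not simple: the functions $\sin(2\pi kx/l)$, extended by zero to the rest of the graph, satisfy the matching conditions for \emph{every} choice of $\vec\alpha$, so the points $(2\pi k/l)^2$ lie in $\sigma(A_{\vec\alpha})$ while being invisible to $M(\lambda)-B$ (the loop's contribution $2\mu\tan(\mu l/2)$ to $m_{kk}$ vanishes there and $\Pi$ does not). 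As written, this breaks both directions of your argument: isospectral operators need not have $F$'s with identical zero sets, and identical zero sets do not by themselves give the full spectra.

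The repair is short but must be made explicitly, and it is what the paper does. By \cite{Karpenko}, rational independence of the edge lengths guarantees simplicity of $A_{\min}$ whenever the graph contains no loops---so this hypothesis is doing work in your first step that you do not acknowledge. When loops are present, the part of $\sigma(A_{\vec\alpha})$ invisible to $M(\lambda)-B$ is precisely the point spectrum of $A_{\min}$, which is independent of the coupling constants and hence identical for $A_{\vec\alpha}$ and $A_{\vec{\tilde\alpha}}$; it therefore cancels from the isospectrality comparison, and the equivalence ``isospectral $\Leftrightarrow$ the visible zero sets coincide'' survives. With that amendment your argument goes through; the multiplicity bookkeeping you flag at the end is real but is resolved by the same simplicity statement.
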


\begin{proof}
The ``only if'' part follows from Lemma \ref{Lemma_entire} and
Proposition \ref{Prop_M} by Hadamard theorem, see \cite{Yorzh3}
for details.

Consider the ``if'' part. In the case when $A_{\min}$ is simple
(i.e., contains no reducing self-adjoint ``parts'') the proof is
trivially based on the fact mentioned above that ``zeroes'' of
$M(\lambda)-B$ are located exactly at the points of spectrum of
the operator $A_B\equiv A_{\vec{\alpha}}$ taking into account that
$M$-function is an R-function in both half-planes of the complex
plane with almost everywhere Hermitian values on $\mathbb R$, see
\cite{DM}.

By \cite{Karpenko} the condition of rational independence of edge
lengths in fact guarantees this simplicity whenever the graph
contains no loops.

If the graph contains loops, we are sure to have parts of the
spectrum of $A_B$ ``invisible'' to the function $M(\lambda)-B$,
but due to \cite{Yorzh1,Karpenko} the ``invisible'' part of the
spectrum coincides with the point spectrum of $A_{\min}$, which is
identical for $A_B$ and $A_{\tilde B}$.
\end{proof}

We remark that the condition that the numbers of zero coupling
constants at vertices of $\delta'$ type are equal in $\vec\alpha$
and $\vec{\tilde\alpha}$ is not needed to prove the ``if'' part of
the latter Lemma since by an argument of \cite{Yorzh3} it already
follows from $\det(M(\lambda)-B)/\det(M(\lambda)-\tilde B)=const.$
This condition is nonetheless vital for correctness of
\eqref{eq_def_phi} and \eqref{eq_lemma_ness_and_suff}.

Assume that $M$ is a symmetric $N\times N$ matrix with matrix
elements $m_{ij}$. Let the weighted \emph{oriented} graph
$\Gamma(M)$ of exactly $N$ vertices be constructed in the
following way. For every $i=1,\dots,N$ attach a loop carrying the
weight $m_{ii}$ to the $i$th vertex. For all $i\not=j$ such that
the matrix element $m_{ij}$ is non-zero, draw two edges (one in
each direction) connecting the $i$th and $j$th vertices, each
carrying the weight $m_{ij}$.

The following Lemma holds.

\begin{lem}\label{Lemma_determinant}
Let $M$ be a symmetric $N\times N$ matrix with matrix elements
$m_{ij}$; let $\Gamma(M)$ be the weighted oriented graph
associated with $M$ as described above.

Let $\mathcal G$ be the set of all spanning (i.e., containing all
vertices of $\Gamma(M)$) subgraphs of $\Gamma(M)$ which are unions
of non-intersecting loops and properly oriented cycles.

Then one has the following formula for $\det M$:
$$ \mathrm{det} M = \sum_{G\in \mathcal G}
(-1)^{N(G)}\prod\limits_{e\in E(G)}w(e),
$$
where the sum is taken over all subgraphs $G\in \mathcal G$;
$E(G)$ is the set of edges of $G$, $w(e)$ is the weight of the
edge $e$ and finally $N(G)$ is the number of cycles containing
even number of edges (even cycles) in $G$.
\end{lem}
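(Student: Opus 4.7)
The plan is to interpret the classical Leibniz expansion
\[\det M \;=\; \sum_{\sigma\in S_N} \mathrm{sgn}(\sigma)\prod_{i=1}^N m_{i,\sigma(i)}\]
combinatorially through the disjoint-cycle decomposition of permutations, and to exhibit a sign-preserving bijection between $S_N$ and $\mathcal G$ that matches the summands.

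First I would recall that every $\sigma\in S_N$ decomposes uniquely into disjoint cycles. A fixed point $\sigma(i)=i$ contributes the factor $m_{ii}$, which is precisely the weight of the loop at vertex $i$ in $\Gamma(M)$. A cycle $(i_1\,i_2\,\dots\,i_k)$ of length $k\geq 2$ (meaning $\sigma(i_j)=i_{j+1}$ cyclically) contributes $m_{i_1 i_2}m_{i_2 i_3}\cdots m_{i_k i_1}$, which equals the product of weights along the directed cycle $i_1\to i_2\to\cdots\to i_k\to i_1$ in $\Gamma(M)$. Assigning to each $\sigma$ the spanning subgraph $G_\sigma$ formed by these loops and directed cycles produces a candidate element of $\mathcal G$, with $\prod_i m_{i,\sigma(i)} = \prod_{e\in E(G_\sigma)} w(e)$ by construction.

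Next I would verify that $\sigma\mapsto G_\sigma$ is a bijection between the permutations whose Leibniz term is non-vanishing and the family $\mathcal G$. The Leibniz term vanishes exactly when some prescribed directed edge of $G_\sigma$ is absent in $\Gamma(M)$; otherwise $G_\sigma\in\mathcal G$. Conversely, any $G\in\mathcal G$ prescribes $\sigma(i)$ uniquely as the successor of $i$ along the loop or directed cycle of $G$ through $i$. One subtlety deserves attention: for $k\geq 3$, the two orientations of an undirected $k$-cycle correspond to distinct permutations $\sigma$ and $\sigma^{-1}$ \emph{and} to distinct subgraphs in $\mathcal G$, so both are legitimately counted — the symmetry $m_{ij}=m_{ji}$ merely makes their weight products coincide, but does not identify the two subgraphs; whereas for $k=2$ the two orientations use the same pair of directed edges and correspond to the single involution $(i\,j)$, producing a single subgraph.

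It remains to match the signs. A $k$-cycle is a product of $k-1$ transpositions and therefore contributes $(-1)^{k-1}$ to $\mathrm{sgn}(\sigma)$: loops ($k=1$) and odd cycles contribute $+1$, even cycles contribute $-1$. Multiplying over the disjoint cycles of $\sigma$ gives $\mathrm{sgn}(\sigma)=(-1)^{N(G_\sigma)}$, which is exactly the sign appearing in the claimed formula; substituting back into the Leibniz expansion concludes the proof. No substantial obstacle arises; the only delicate point is the orientation bookkeeping just described, which is precisely what ensures the bijection is exact rather than a double-count.
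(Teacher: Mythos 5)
Your proposal is correct and follows essentially the same route as the paper's own proof: expand the determinant by the Leibniz formula, identify each nonvanishing monomial with a spanning union of loops and properly oriented cycles via the disjoint-cycle decomposition of the permutation, and observe that the sign equals $(-1)^{N(G)}$ because each $k$-cycle contributes $(-1)^{k-1}$. Your explicit bookkeeping of the two orientations of a $k$-cycle for $k\geq 3$ versus the single 2-cycle is a welcome clarification of a point the paper leaves implicit, but it does not change the argument.
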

\begin{rem}
This Lemma is probably well known in some similar form. We have
picked up the idea in \cite{Tutte}, where it is used in the
framework of spectral theory of discrete Laplace operators (and
then $\sum_{j=1}^N m_{ij}=0$ $\forall i$, which simplifies the
result). For the sake of completeness, we provide the proof below.
\end{rem}

\begin{proof}
By the definition of determinant,
$$
\mathrm{det} M = \sum\limits_{\{a_1,...,a_n\}}
(-1)^{\tau(a_1,a_2,...,a_n)} m_{1a_1}...m_{na_n},
$$
where $\{a_1,...,a_n\}$ is a permutation of the set
$\{1,\dots,n\}$ and $\tau\{a_1,a_2,...,a_n\}$ is its parity.
Consider an arbitrary nonzero monomial $m_{1a_1}...m_{na_n}$ of
this sum. All factors of the form $m_{ii}$ correspond to loops of
 $\Gamma(M)$. Let exactly $k$ factors in this monomial be diagonal elements; consider the remaining part of it,
$m_{i_1a_{i_1}}...m_{i_{N-k}a_{i_{N-k}}}$. The permutation
 $\{a_{i_1},...,a_{i_{N-k}}\}$ of the set $\{i_1,...,i_{N-k}\}$ can obviously be  decomposed into the product of independent
 permutations, each of which corresponds to a certain simple cycle of $\Gamma(M)$.
Moreover, these cycles do not have common vertices.

Therefore, the monomial $m_{1a_1}...m_{na_n}$ is nothing but the
product of edge weights of a spanning subgraph $G\in\Gamma(M)$
which is a collection of loops and simple cycles. On the other
hand, it is easily seen that for any subgraph of this type there
exists a corresponding monomial in $\det M$.

As for the sign, $(-1)^{N(G)}=\tau\{a_1,a_2,...,a_n\}$ since the
parity of a permutation is exactly the number of even permutations
under the decomposition into the product of independent factors.
\end{proof}

We remark that if some  matrix elements $m_{ij}$ are decomposable
into sums, $m_{ji}=m_{ij}=\mu_{ij}+\nu_{ij}$, it is natural to
modify the construction of the graph $\Gamma(M)$ in a way such
that this graph: (i) has two loops with weights $\mu_{ii}$ and
$\nu_{ii}$ attached to the vertex $V_i$ in the case of $i=j$; (ii)
has two edges carrying weights $\mu_{ij}$ and $\nu_{ij}$,
respectively, directed from the vertex $V_i$ to the vertex $V_j$
\emph{and} two edges of same weights in the opposite direction. In
this situation, the proof just given applies verbatim.

An inspection of \eqref{Eq_Weyl_Func_Delta} and
\eqref{Eq_Weyl_Func_Delta'} shows how this idea can be utilized to
control the determinant of $M(\lambda)-B$. Note that this matrix
bears close resemblance to the adjacency matrix of the graph
$\Gamma_\delta$ in the sense that  (i) it is symmetric and (ii)
any non-diagonal matrix element of it is zero iff the
corresponding two vertices are disconnected in $\Gamma_\delta$. We
start by describing the corresponding modification procedure for
the graph $\Gamma_\delta$, which is to yield the graph
$\Gamma(M(\lambda)-B)$ in terms of Lemma \ref{Lemma_determinant}.

\subsubsection*{Modification of the graph
$\Gamma_{\delta}$}

1. The set of vertices of $\Gamma_{\delta}$ remains unchanged.

2. Every edge $e$ of length $l$ which is not a loop is doubled
(with both instances assigned opposite directions); the same
weight $w(e)$ is assigned to both instances:
\begin{equation*}
      \omega(e)=
     \begin{cases}
\mu\csc\mu l,
            & \text{if $e$ connects vertices}  \mbox{ of }\ \delta\mbox{ type};\\
\frac{1}{\mu}\csc\mu l,
            & \text{if $e$ connects vertices}  \mbox{ of }\ \delta'\mbox{ type}; \\

-\sec\mu l,
            & \text{if $e$ connects vertices}  \mbox{ of different types}.\\

     \end{cases}
\end{equation*}

3. Every edge $e$ of length $l$ connecting a vertex $V$ to any
other vertex $W$ induces a loop $\mathcal O$ attached to the
vertex $V$ carrying the weight $w(\mathcal O)$:
\begin{equation*}
      w(\mathcal O)=
     \begin{cases}
-\mu\cot\mu l,
            & \text{if } V, W \mbox{ are both of }\ \delta\mbox{ type};\\
-\frac{1}{\mu}\cot\mu l,
            & \text{if }V, W  \mbox{ are both of }\ \delta'\mbox{ type}; \\
\mu\tan\mu l,
            & \text{if }V \mbox{ is a vertex of }\ \delta \mbox{ type and } W  \mbox{ is of }\ \delta'\mbox{ type};\\
\frac{1}{\mu}\tan\mu l,
           & \text{if } V \mbox{ is a vertex of }\ \delta'\mbox{ type and } W  \mbox{ is of }\ \delta\mbox{ type}.\\
     \end{cases}
\end{equation*}

3. Each loop $e$ of length $l$ attached to a vertex $V$ to be
found in the graph $\Gamma_\delta$   is assigned the weight
$w(e)$:
\begin{equation*}
      w(e)=
     \begin{cases}
2\mu\tan\frac{\mu l}{2},
            &\text{if } V \mbox{ is of }\ \delta\mbox{ type};\\
-\frac{2}{\mu}\cot\frac{\mu l}{2},
            &\text{if }V  \mbox{ is of }\ \delta'\mbox{ type}. \\
     \end{cases}
\end{equation*}

4. Finally, attach a loop of weight $-\alpha_V$ to every vertex
$V$, where $\alpha_V$ is the coupling constant pertaining to the
vertex $V$. These loops will henceforth be treated specially; we
will refer to them as \emph{alpha-loops}.

We will denote the graph $\Gamma_\delta$ so modified by the symbol
$\Gamma_\delta^{(mod)}$.

\subsubsection*{Treatment of subgraphs for
$\Gamma_\delta^{(mod)}$}

Let as in Lemma \ref{Lemma_determinant} $\mathcal G$ be the set of
all spanning  subgraphs of $\Gamma_\delta^{(mod)}$ which are
unions of non-intersecting loops and properly oriented cycles.

The named Lemma then gives for $\det(M(\lambda)-B)$:
$$ \mathrm{det} (M(\lambda)-B) = \sum_{G\in \mathcal G}
(-1)^{N(G)}\prod\limits_{e\in E(G)}w(e).
$$

Each subgraph $G\in\mathcal G$ can be uniquely decomposed into a
graph which is a collection of alpha-loops $G_{\alpha}$ and the
remainder $G_{\bar\alpha}$:
$$
G=G_{\alpha}+G_{\bar\alpha}.
$$
When $G$ is the union of \emph{all} alpha-loops of
$\Gamma_\delta^{(mod)}$, for reasons of convenience we will write
$G_{\bar\alpha}:=\mathbb O$ -- the empty subgraph, to which we
ascribe the weight 1, $w(\mathbb O)=1$. The same convention will
be applied in the situation when $G$ contains no alpha-loops,
i.e., $G=G_{\bar\alpha}$; in this case, we will put
$G_\alpha:=\mathbb O$.

Let the set $\mathcal G_\alpha$ be the set of all $G_{\bar\alpha}$
including $\mathbb O$ as $G$ spans $\mathcal G$:
$$
\mathcal G_{\bar\alpha}=\{G_{\bar\alpha}|G\in\mathcal G\}.
$$
If one introduces the natural notation $w(G):=\prod_{e\in
E(G)}w(e)$ and takes into account that clearly $w(G)=w(G_\alpha)
w(G_{\bar\alpha})$, one immediately gets by Lemma
\ref{Lemma_determinant}
$$
\det (M(\lambda)-B) = \sum_{G\in \mathcal G_{\bar\alpha}}
(-1)^{N(G)}w(G_\alpha)w(G),
$$
where $G_\alpha$ is a collection of alpha-loops needed to build
$G$ up to a spanning subgraph of $\mathcal G$; obviously,
$G_\alpha$ is defined uniquely by $G$. Moreover, $w(G_\alpha)$ is
always a product (up to the sign) of those coupling constants
$\alpha_i$ which appear as the weights in $G_\alpha$.

Had no further reductions in the set $\mathcal G_{\bar\alpha}$
been possible, we would have been all done by now. Yet, clearly
$\mathcal G_{\bar\alpha}$ always contains exactly two graphs of
the form $G_1=G_0+G_f$ and $G_2=G_0+G_e$, where $G_f$ is a
properly directed loop of exactly two vertices, say $V$ and $W$,
connected in $\Gamma_\delta$ by an edge of length $l$, with equal
weights $w$ on both edges, whereas $G_e$ is a graph of two
disjoint loops of weights $w'=w'(w), w''=w''(w)$ attached to $V$
and $W$, respectively. Here $w'(w)$ and $w''(w)$ can be either:
(i) $-\mu \cot \mu l$, $-\mu \cot \mu l$, if both $V$ and $W$ are
of $\delta$ type, \emph{or} (ii) $-\mu^{-1}\cot\mu l$,
$-\mu^{-1}\cot\mu l$, if both $V$ and $W$ are of $\delta'$ type,
\emph{or}, finally, (iii) $\mu \tan\mu l$, $\mu^{-1}\tan\mu l$, if
$V$ and $W$ are of different types.

Consider the three named possibilities.

(i) If both $V$ and $W$ are of $\delta$ type, one gets
$w(G_f)=\mu^2/\sin^2(\mu l)$ and $w(G_e)=\mu^2 \cot^2(\mu l)$. By
the main trigonometric identity one then has:
\[
(-1)^{N(G_1)}w(G_1)+(-1)^{N(G_2)}w(G_2)=(-1)^{N(G_1)}\mu^2 w(G_0).
\]

(ii) If both $V$ and $W$ are of type $\delta'$, one has
$w(G_f)=\csc^2(\mu l)/\mu^2$ and $w(G_e)= \cot^2(\mu l)/\mu^2$ to
the same effect.

(iii) Finally, if $V$ and $W$ are of different types, one gets the
possibility of $w(G_f)=\sec^2(\mu l)$ and $w(G_e)= \tan^2(\mu l)$
leading to the cancellation of the same type.

Surely $G_0$ itself might be decomposable into either
$G_0=G_0'+G_f'$ or $G_0=G_0'+G_e'$, but if, say, the former of the
two decompositions holds, the subgraph $\tilde
G_0=G_0'+G_e'\in\mathcal G_{\bar\alpha}$. In turn, the
cancellation procedure for $\tilde G_0+G_f$ and $\tilde G_0+G_e$
as described above is applicable. Then one arrives at the
possibility of the corresponding cancellation procedure between
$G_0$ and $\tilde G_0$. The same argument applies to the second
named possibility.

Therefore, the cancellation described above can be applicable
repeatedly. Lemma \ref{Lemma_entire} easily implies that it
results in no subgraph weights which involve squares of
trigonometric functions; on the other hand, no higher powers are
possible due to the condition that edge lengths of $\Gamma_\delta$
are rationally independent.

The above argument gives ground to one further step in
modification of $\mathcal G_{\bar\alpha}$.

\begin{defn}\label{def_cancellation}
We define $\hat{\mathfrak{G}}$ as the set $\mathcal
G_{\bar\alpha}$, to each graph $G$ of which the following
procedure is applied.

I. Assume that $G\in\mathcal G_{\bar\alpha}$ contains two
vertices, connected in $\Gamma_\delta$ by an edge of length $l$,
to which the loops with weights of either $-\mu \cot \mu l$, $-\mu
\cot \mu l$, \emph{or} $\mu \tan\mu l$, $\mu^{-1}\tan\mu l$,
\emph{or} $-\mu^{-1}\cot\mu l$, $-\mu^{-1}\cot\mu l$ are attached,
respectively. In this case, eliminate this subgraph $G$
altogether.

II. Assuming that $G\in\mathcal G_{\bar\alpha}$ contains a
directed cycle of exactly two vertices with equal weights $w$ on
both edges, replace the weights $w$ by: (i) $\mu$, if both
vertices are of $\delta$ type; (ii) $1$, if they are of different
types and (iii) $1/\mu$, if both vertices are of type $\delta'$.

Finally, introduce the following equivalence relation. Put
$G_1\sim G_2$, if $w(G_1)=w(G_2)$, and define the factored set
\begin{equation}\label{Eq_sim}
\mathfrak{G}:=\hat{\mathfrak{G}}/\sim.
\end{equation}
\end{defn}

We have thus arrived at the following
\begin{thm}\label{Thm_det}
Assume that $A_{\vec{\alpha}}$ is a Laplacian on the graph
$\Gamma_{\delta}$ with coupling constants
$\{\alpha_1,\dots,\alpha_N\}$. Let $M(\lambda)$ be its
Weyl-Titchmarsh M-function provided by Proposition \ref{Prop_M},
and let $B=\mathrm{diag}\{\alpha_1,\dots,\alpha_N\}$. Then
\begin{equation}\label{Eq_det}
\det (M(\lambda)-B) =  \sum_{\gamma\in\mathfrak{G}}
f_\gamma(\vec\alpha)w(\gamma),
\end{equation}
where $\mathfrak{G}$ is defined by \eqref{Eq_sim}, $\gamma$ is
treated as an equivalence class belonging to $\mathfrak{G}$ with
the natural definition of $w(\gamma)$ as the weight of any of
subgraphs $G$ of this equivalence class and
$$
f_\gamma(\vec\alpha)=\sum_{G\in \gamma}(-1)^{N(G)}w(G_\alpha),
$$
$G_\alpha$ as above being a collection of alpha-loops needed to
build $G$ up to a spanning subgraph of $\mathcal G$, uniquely
determined by $G$.
\end{thm}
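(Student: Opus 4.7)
The plan is to combine the explicit structure of $M(\lambda)-B$ (via Proposition \ref{Prop_M}) with Lemma \ref{Lemma_determinant}, and then to carry out the cancellation procedure sketched above Definition \ref{def_cancellation} in a systematic manner. First I would observe that, by the construction of $\Gamma_\delta^{(mod)}$, this graph is precisely the graph $\Gamma(M(\lambda)-B)$ associated with the matrix $M(\lambda)-B$, where one uses the splitting of each diagonal entry $m_{kk}$ into a sum (one summand per incident edge or loop, plus the $-\alpha_k$ coming from $-B$) and the remark immediately following Lemma \ref{Lemma_determinant} justifying the use of several loops per vertex and several parallel directed edges per ordered pair. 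Applying Lemma \ref{Lemma_determinant} to $M(\lambda)-B$ one therefore obtains
\[
\det(M(\lambda)-B)=\sum_{G\in\mathcal G}(-1)^{N(G)}w(G),
\]
where $\mathcal G$ is the set of spanning subgraphs of $\Gamma_\delta^{(mod)}$ consisting of disjoint loops and properly oriented simple cycles.

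Next I would isolate the alpha-loops. Since every alpha-loop appears as an isolated vertex contribution (it is a loop and cannot enter any cycle of length $\ge 2$), each $G\in\mathcal G$ decomposes uniquely as $G=G_\alpha+G_{\bar\alpha}$ with $G_\alpha$ a union of alpha-loops and $G_{\bar\alpha}\in\mathcal G_{\bar\alpha}$ (with the conventions about $\mathbb O$ explained before the theorem). This turns the determinant into
\[
\det(M(\lambda)-B)=\sum_{G\in\mathcal G_{\bar\alpha}}(-1)^{N(G)}w(G_\alpha)\,w(G),
\]
as already noted in the discussion preceding the theorem.

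The crucial step is to reduce $\mathcal G_{\bar\alpha}$ to $\hat{\mathfrak G}$ via the pairing argument. For any fixed $G_0\in\mathcal G_{\bar\alpha}$ which, on a pair of adjacent vertices $V,W$ connected in $\Gamma_\delta$ by an edge of length $l$, accommodates on these two vertices either a $2$-cycle of equal weights, or the two loops with matching cotangent/tangent weights from the construction of $\Gamma_\delta^{(mod)}$, the cancellation computation carried out above Definition \ref{def_cancellation} in the three cases (both $\delta$, both $\delta'$, mixed) shows that the corresponding pair of subgraphs $G_0+G_f$ and $G_0+G_e$ contributes
\[
(-1)^{N(G_0+G_f)}w(G_0+G_f)+(-1)^{N(G_0+G_e)}w(G_0+G_e)=(-1)^{N(G_0+G_f)}\,c\,w(G_0),
\]
with $c$ equal to $\mu^2$, $1/\mu^2$, or $1$ according to the three cases, by virtue of $\csc^2-\cot^2=1$ and $\sec^2-\tan^2=1$. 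Applying this replacement to every such pair of vertices (the possibility of iterating on $G_0$ itself being handled by the remark after (iii) preceding the definition), one arrives exactly at the operations I and II of Definition \ref{def_cancellation}, i.e.\ at $\hat{\mathfrak G}$; Lemma \ref{Lemma_entire} together with rational independence of the edge lengths guarantees that the procedure terminates without producing squared trigonometric factors and that no further cancellations survive. Grouping the remaining terms by the equivalence relation $\sim$ of \eqref{Eq_sim} collects those subgraphs which share the same weight $w(\gamma)$, and the prefactor becomes exactly $f_\gamma(\vec\alpha)=\sum_{G\in\gamma}(-1)^{N(G)}w(G_\alpha)$, yielding \eqref{Eq_det}.

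The step I expect to be the most delicate is the rigorous justification that the iterated cancellation is well-defined and exhaustive: one must check that, when several adjacent pairs of the type described coexist in the same subgraph, the pairing into $G_0+G_f$ versus $G_0+G_e$ can be organised consistently so that each class of subgraphs is paired exactly once, and that the surviving subgraphs (those retained in $\hat{\mathfrak G}$) are precisely those containing no such adjacent cotangent/tangent pair. Rational independence of edge lengths is essential here, since it prevents accidental coincidences of weights across non-paired subgraphs, so that equivalence under $\sim$ reflects only the intended cancellations.
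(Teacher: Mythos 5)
Your proposal is correct and follows essentially the same route as the paper: the theorem is in fact stated there as the culmination of exactly this chain of reasoning (identifying $\Gamma_\delta^{(mod)}$ with $\Gamma(M(\lambda)-B)$ via the splitting of diagonal entries, applying Lemma \ref{Lemma_determinant}, factoring out the alpha-loops, and performing the $\csc^2-\cot^2=1$ / $\sec^2-\tan^2=1$ cancellations encoded in Definition \ref{def_cancellation}). The delicate point you flag at the end — consistency and exhaustiveness of the iterated pairing — is treated at the same (informal) level in the paper, which likewise appeals to Lemma \ref{Lemma_entire} and rational independence to rule out surviving squared trigonometric factors.
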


\begin{rem}
One can easily ascertain that in the situation of all vertices of
$\Gamma_\delta$ being of the same type (i.e., either $\delta$ or
$\delta'$) the function $f_\gamma(\vec\alpha)$ for each
$\gamma\in\mathfrak{G}$ is homogeneous w.r.t. its argument. It is
in particular linear in $\{\alpha_i\}_{i=1}^N$ for any class
$\gamma$ such that $G_\alpha$, $\forall G\in\gamma$ is a single
alpha-loop.
\end{rem}

The result of Theorem \ref{Thm_det} allows to reformulate Lemma
\ref{Lemma_ness_and_suff} in the following explicit form.

\begin{thm}\label{Thm_n_and_s}
Assume that $A_{\vec{\alpha}}$ and $A_{\vec{\widetilde{\alpha}}}$
are two Laplacians on the graph $\Gamma_{\delta}$. Assume that all
 edge lengths are rationally independent. These operators are
isospectral if and only if the numbers of zero coupling constants
at vertices of $\delta'$ type are equal and
\begin{equation}\label{eq_n_and_s}
\Phi(\vec\alpha) f_\gamma(\vec\alpha)=\Phi(\vec{\tilde\alpha})
f_\gamma(\vec{\tilde\alpha}), \quad \forall \gamma\in\mathfrak{G},
\end{equation}
where $f_\gamma$ and $\mathfrak{G}$ are as in Theorem
\ref{Thm_det}; the function $\Phi$ is defined in Lemma
\ref{Lemma_ness_and_suff}.
\end{thm}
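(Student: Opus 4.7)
The plan is to combine Lemma \ref{Lemma_ness_and_suff} with the explicit determinant formula of Theorem \ref{Thm_det} and then extract the desired coefficient-by-coefficient identity by exploiting rational independence of edge lengths. First, I would note that the part of the statement concerning zero coupling constants at $\delta'$ vertices is already present in Lemma \ref{Lemma_ness_and_suff}, so one only needs to show that, assuming this numerical condition, the identity \eqref{eq_lemma_ness_and_suff} is equivalent to the family of identities \eqref{eq_n_and_s} indexed by $\gamma\in\mathfrak{G}$.

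Substituting \eqref{Eq_det} into \eqref{eq_lemma_ness_and_suff} reduces the question to the identity
\[
\Pi(\lambda)\sum_{\gamma\in\mathfrak{G}}\bigl[\Phi(\vec\alpha)f_\gamma(\vec\alpha)-\Phi(\vec{\tilde\alpha})f_\gamma(\vec{\tilde\alpha})\bigr]w(\gamma)\equiv 0,\quad \lambda\in\mathbb C.
\]
The $\vec\alpha$-dependence is factored out into scalar coefficients, so the whole issue becomes linear independence of the functions $\{\Pi(\lambda)w(\gamma)\}_{\gamma\in\mathfrak{G}}$ on $\mathbb C$. Once this is established, one reads off \eqref{eq_n_and_s} as a coefficient comparison, and the converse follows by direct substitution.

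The heart of the argument, which I expect to be the main obstacle, is therefore the linear independence step. Here is how I would carry it out. Each $w(\gamma)$, after the cancellations of Definition \ref{def_cancellation}, is (up to a power of $\mu$) a monomial in the quantities $\sin\mu l_t$, $\cos\mu l_t$, $\sin\tfrac{\mu l_t}{2}$, $\cos\tfrac{\mu l_t}{2}$ for edges $e_t$ of $\Gamma_\delta$, in which no edge length appears in the denominator to a power higher than one (this is exactly what the two-step cancellation procedure guarantees, together with the factor $\Pi(\lambda)$ which clears the remaining denominators). Writing each sine and cosine as a linear combination of the exponentials $e^{\pm i\mu l_t/2}$ converts $\Pi(\lambda)w(\gamma)$ into a Laurent polynomial in these exponentials, i.e., an exponential sum of the form $\sum_{\mathbf n}c_{\mathbf n}(\mu)\exp\bigl(i\mu\sum_t n_t l_t/2\bigr)$ with $n_t\in\mathbb Z$ and $c_{\mathbf n}(\mu)$ a (half-integer) power of $\mu$. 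Rational independence of the $l_t$ ensures that exponentials with distinct integer vectors $\mathbf n$ are linearly independent as entire functions of $\mu$, and the equivalence relation $G_1\sim G_2\Leftrightarrow w(G_1)=w(G_2)$ built into \eqref{Eq_sim} guarantees that distinct classes $\gamma\in\mathfrak{G}$ produce distinct monomials and hence disjoint supports (modulo the involution $\mathbf n\leftrightarrow -\mathbf n$ that merely reflects realness). Consequently the identity above forces every bracketed coefficient to vanish, which is precisely \eqref{eq_n_and_s}.

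Once this linear independence is in hand, the rest is bookkeeping: the ``only if'' direction is just what we have derived, while the ``if'' direction amounts to multiplying \eqref{eq_n_and_s} by $w(\gamma)\Pi(\lambda)$, summing, and invoking Lemma \ref{Lemma_ness_and_suff} together with the equality of the numbers of zero coupling constants at $\delta'$ vertices (which is hypothesised explicitly). The delicate point I would double-check is that the cancellation procedure of Definition \ref{def_cancellation} indeed removes all terms in which an edge length appears with a squared trigonometric factor, so that the resulting monomials have a canonical reduced form; this is precisely what was verified in cases (i)--(iii) preceding Definition \ref{def_cancellation}, and it is what makes the classes in $\mathfrak{G}$ pairwise distinguishable.
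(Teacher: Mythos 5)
Your proposal is correct and follows essentially the same route as the paper: the published proof of Theorem \ref{Thm_n_and_s} likewise combines Lemma \ref{Lemma_ness_and_suff} with Theorem \ref{Thm_det} and invokes the linear independence of the functions $\Pi(\lambda)w(\gamma)$ provided by the rational independence of the edge lengths. Your exponential-sum justification of that linear independence merely spells out a step the paper leaves implicit.
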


\begin{proof}
The ``only if'' part follows immediately from Lemma
\ref{Lemma_ness_and_suff}, taking into account linear independence
of all functions of the form $\Pi(\lambda)w(\gamma)$, provided for
by the condition of rational independence of edge lengths.

The ``if'' part is trivial by Theorem \ref{Thm_det} and Lemma
\ref{Lemma_ness_and_suff}.
\end{proof}



One has to admit that the result of Theorem
\ref{Lemma_ness_and_suff} at the first glance does not seem to be
suitable for applications. Indeed, for a given graph
$\Gamma_\delta$, even having explicitly calculated  functions
$f_\gamma$, one faces the necessity to consider all possibilities
for $\Phi(\vec\alpha)$ and $\Phi(\vec{\tilde\alpha})$ in turn.
These are to include all admissible configurations of zero
coupling constants at $\delta'$ type vertices independently for
$\vec\alpha$ and $\vec{\tilde\alpha}$. Each of these
configurations on the face of it gives rise to a different set of
necessary and sufficient conditions of isospectrality.

However, matters do simplify to a great extent provided that one
faces the situation of $\Phi(\vec\alpha)=\Phi(\vec{\tilde\alpha})$
under the assumption of isospectrality. Indeed, not only the
awkward first factors then disappear from conditions
\eqref{eq_n_and_s}, but functions $f_\gamma$ also admit further
simplification. In order to ascertain this, note that by
construction some of functions $f_\gamma$ may contain constant
summands. These clearly come from $\gamma\in\mathfrak G$
containing at least one $G\in \mathcal G$ such that
$G_\alpha=\mathbb O$. Under the additional assumption
$\Phi(\vec\alpha)=\Phi(\vec{\tilde\alpha})$ these constant terms
can clearly be dropped altogether. For any $\gamma$ decompose
$f_\gamma(\vec\alpha)=c_\gamma+g_\gamma(\vec\alpha)$, where
$c_\gamma$ is the term in $f_\gamma$ independent of $\vec\alpha$.
Then one has

\begin{cor}\label{cor_n_and_s}
Assume that $A_{\vec{\alpha}}$ and $A_{\vec{\widetilde{\alpha}}}$
are two Laplacians on the graph $\Gamma_{\delta}$. Assume that all
edge lengths are rationally independent. Let the condition of
isospectrality yield
\begin{equation}\label{eq_phi}\Phi(\vec\alpha)=\Phi(\vec{\tilde\alpha}).\end{equation} Then the
operators are isospectral if and only if the numbers of zero
coupling constants at vertices of $\delta'$ type are equal and
\begin{equation}\label{eq_n_and_s1}
g_\gamma(\vec\alpha)= g_\gamma(\vec{\tilde\alpha}), \quad \forall
\gamma\in\mathfrak{G}.
\end{equation}
\end{cor}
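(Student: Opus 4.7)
The plan is to derive this corollary as a cosmetic simplification of Theorem \ref{Thm_n_and_s}, using the extra hypothesis \eqref{eq_phi} to cancel the $\Phi$-factors and to discard the terms that are common to both sides of \eqref{eq_n_and_s}.

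First I would invoke Theorem \ref{Thm_n_and_s} directly: isospectrality of $A_{\vec\alpha}$ and $A_{\vec{\tilde\alpha}}$ is equivalent to equality of the numbers of vanishing coupling constants at vertices of $\delta'$ type together with the system
$$\Phi(\vec\alpha)f_\gamma(\vec\alpha) = \Phi(\vec{\tilde\alpha})f_\gamma(\vec{\tilde\alpha}), \qquad \forall\,\gamma\in\mathfrak{G}.$$
Under the standing hypothesis \eqref{eq_phi} the two scalars $\Phi(\vec\alpha)$ and $\Phi(\vec{\tilde\alpha})$ coincide. A brief inspection of the definition \eqref{eq_def_phi} shows that $\Phi(\vec\alpha)$ is a product of the nonzero factors $\deg V_j/\alpha_j$ (taken over $\delta'$-type vertices with $\alpha_j\neq 0$), the empty product being $1$; in either case $\Phi(\vec\alpha)\neq 0$, so this common value can be cancelled in both directions of the desired equivalence.

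It then remains to pass between $f_\gamma$ and $g_\gamma$. Writing $f_\gamma(\vec\alpha) = c_\gamma + g_\gamma(\vec\alpha)$, I would record that $c_\gamma$ collects exactly the contributions to $f_\gamma$ coming from those $G\in\gamma$ with $G_\alpha = \mathbb O$; by our convention $w(\mathbb O)=1$, so these summands are signs $(-1)^{N(G)}$ that depend only on the equivalence class $\gamma\in\mathfrak{G}$ and not on the coupling constants. Consequently $c_\gamma$ takes the same value on the $\vec\alpha$-side and on the $\vec{\tilde\alpha}$-side. Thus, after cancellation of $\Phi$, the identity $f_\gamma(\vec\alpha) = f_\gamma(\vec{\tilde\alpha})$ is equivalent to \eqref{eq_n_and_s1}; and conversely, adding $c_\gamma$ to both sides of \eqref{eq_n_and_s1} and then multiplying by the common value of $\Phi$ restores \eqref{eq_n_and_s}.

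I do not foresee a genuine obstacle: the corollary is a book-keeping reformulation of the theorem, and the only points requiring an explicit sentence each in the final write-up are the non-vanishing of $\Phi(\vec\alpha)$ and the $\vec\alpha$-independence of $c_\gamma$, both of which follow at once from \eqref{eq_def_phi} and from the construction of $\mathfrak{G}$ and $G_\alpha$ preceding Theorem \ref{Thm_det}.
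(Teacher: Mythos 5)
Your argument is correct and coincides with the paper's own (implicit) justification: cancel the common nonzero factor $\Phi$ in \eqref{eq_n_and_s} and observe that the constant parts $c_\gamma$, coming from the subgraphs $G\in\gamma$ with $G_\alpha=\mathbb O$ and weight $w(\mathbb O)=1$, are independent of the coupling constants and hence drop out of the equality $f_\gamma(\vec\alpha)=f_\gamma(\vec{\tilde\alpha})$. The two points you single out for explicit mention --- $\Phi(\vec\alpha)\neq 0$ and the $\vec\alpha$-independence of $c_\gamma$ --- are exactly the ones the paper relies on.
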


It turns out that in general \eqref{eq_phi} cannot be guaranteed
(see Example \ref{A3} below) even in the situation of rationally
independent edge lengths. Nevertheless one might argue that
\emph{almost all} graphs $\Gamma_\delta$ have this property. This
claim is in fact a corollary of what is to follow.

Note first that equality \eqref{eq_phi} under the assumption of
isospectrality is in fact guaranteed \cite{Yorzh3} in the case
when all graph vertices are of the same type (trivially in the
$\delta$ case).

In the general situation, first assume that the graph
$\Gamma_\delta$ is not a tree. Then one is sure to have a subgraph
$G\in\mathcal G$ (cf. Lemma \ref{Lemma_determinant}) with
$G_\alpha=\mathbb O$ such that for no graph $\tilde G\in \mathcal
G$ with $G_\alpha\neq \mathbb O$ the equality $w(\tilde
G_{\bar\alpha})=w(G)$ holds.

This subgraph is nothing but a set of $N$ regular loops (i.e.,
non-alpha loops).

The proof of this is a straightforward application of the
following simple

\begin{lem}\label{Lemma_marking}
Assume that $\Gamma$ is a graph with each edge assigned a
different weight $l_j$. Call a marking of graph vertices
\emph{admissible} if any vertex is allowed to be marked by either
of the weights $l_j$ of the edges incident to it \emph{and} no
edge has both endpoints marked by its weight at the same time,
unless this edge is a loop. Then

(i) if the graph is not a tree, there exists an admissible marking
of vertices such that no proper subgraph $G\subsetneq\Gamma$ has
an admissible marking with the same set of marks;

(ii) if the graph is a tree, there exists no admissible marking.
In this situation, the maximal number of different weights
appearing in a marking equals $n=N-1$; this ``maximal'' marking
has exactly one edge having both endpoints marked with its weight.
For any edge of the graph, a maximal marking is constructed
uniquely such that this particular edge and this edge only has
both endpoints marked by the same weight.
\end{lem}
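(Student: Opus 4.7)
The plan is to treat the two parts in parallel, with the tree case (ii) carrying the combinatorial core and the non-tree case (i) then following from the existence of an injective admissible marking.

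For (ii), I would read an admissible marking of a tree $\Gamma$ as each vertex ``claiming'' an incident edge, namely the one whose weight it is marked by. Admissibility together with the absence of loops in a tree forces each edge to be claimed by at most one of its two endpoints; hence the total of $N$ claims cannot exceed $|E|=N-1$. This pigeonhole gives both the non-existence of admissible markings and the bound $|f(V)|\le N-1$ for any marking, with equality occurring exactly when $f$ is surjective onto the edge weights and thus exactly one edge is claimed by both of its endpoints. To realize a maximal marking whose over-claimed edge is a prescribed $e=(u,v)$, I would delete $e$, root the two resulting subtrees $T_u$ and $T_v$ at $u$ and $v$ respectively, and set $f(w)$ equal to the weight of $w$'s parent edge for every non-root $w$. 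Uniqueness of this construction follows by iterated leaf removal on the rooted forest: a leaf distinct from its root has a unique incident edge and hence a forced mark, and after removing that leaf with its edge one applies induction to the smaller forest.

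For (i), $\Gamma$ is connected and not a tree, so it contains a cycle $C=V_1V_2\cdots V_kV_1$. The plan is to construct an injective admissible marking $f$: set $f(V_i)$ equal to the weight of $(V_i,V_{i+1})$ cyclically on $C$, and, considering a BFS tree rooted at $V(C)$, set $f(w)$ equal to the weight of the edge from $w$ to its BFS parent for every non-cycle $w$. Injectivity is immediate because cycle edges and BFS-tree edges form disjoint families each used without repetition. Admissibility can be checked edge by edge: a cycle edge is claimed only by its first endpoint in the cyclic orientation; a BFS-tree edge $(w,p(w))$ is claimed only by $w$, since $p(w)$'s own mark lies one BFS level closer to the cycle; and any remaining edges of $\Gamma$ are claimed by nobody. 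Because $|f(V)|=N$, no admissible marking of a subgraph on strictly fewer than $N$ vertices can produce an image of the same cardinality, and hence none can reproduce the mark set---this is precisely the form of the conclusion needed in the application that follows, where the competing subgraphs arise as non-alpha parts of spanning subgraphs in $\mathcal G$ and hence have vertex support strictly smaller than $V$.

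The step I expect to require the most care is the uniqueness assertion in (ii). The subtlety is that $u$ and $v$, already marked by $l_e$, have to act as absorbing boundary of the leaf-removal process rather than as ordinary vertices awaiting marks; rooting $T_u$ at $u$ and $T_v$ at $v$ is precisely the device that prevents the induction from ever forcing a re-marking of the roots, and it is this rooting that gives the ``For any edge of the graph$\ldots$'' clause its force: each choice of $e$ selects the rooting, which in turn determines the marking uniquely.
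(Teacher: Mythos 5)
Your proof is correct and follows essentially the same route as the paper's: a parent-edge marking on a rooted spanning tree, with the surplus (cycle) edge supplying the one extra, injective mark in the non-tree case and a counting argument in the tree case; your pigeonhole bound and the leaf-removal uniqueness argument are in fact spelled out in more detail than in the paper. The only point to add is that a connected non-tree multigraph need not contain a cycle of length $\ge 2$ when its only surplus edges are loops; the paper covers this case by marking the loop's vertex with the loop's own weight, which the ``unless this edge is a loop'' clause of the definition permits.
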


\begin{proof}
We start with (ii). Take any vertex $V$ of the tree $\Gamma$ as
the root. Mark any vertex $V'$ adjacent to it with the weight of
the edge connecting it to $V$. Repeat this step for every $V'$ in
the r\^ole of $V$ and carry on until the graph is over. Now pick
any of the edges incident to the root $V$ and mark $V$ with its
weight. The claim follows.

Consider (i). Pick a vertex $V$ belonging to at least one cycle of
the graph or a vertex $V$ with a loop attached to it provided that
$\Gamma$ contains no cycles. Take a spanning tree $T$ such that at
least one edge $e$ of $\Gamma$ incident to $V$ is missing from it.
Mark all vertices but $V$ as above, then mark the vertex $V$ with
the weight of $e$.

The second part of the claim follows from the fact that any proper
subgraph of $\Gamma$ has at least one vertex less than $\Gamma$.
By construction of the marking, there are no repeating weights in
it, thus any proper subgraph will have at least one weight less.
\end{proof}

Applying Theorem \ref{Thm_n_and_s} to a pair of graph Laplacians
on a non-tree graph $\Gamma_\delta$ which are assumed to be
isospectral one  then immediately arrives at the identity
$\Phi(\vec{\alpha})\equiv\Phi(\vec{\tilde\alpha})$.

We now pass over to the surprisingly much more involved analysis
of the situation when $\Gamma_\delta$ is a tree.

Assume again that we have two isospectral Laplacians on the graph
$\Gamma_\delta$. Then by \cite{Yorzh3}, one has the set equality
$S(\vec{\alpha})=S(\vec{\tilde\alpha})$, where
$S(\vec{\alpha}):=\{\sigma_1,\dots,\sigma_N\}$ with
\begin{equation}\label{eq_sigma}
\sigma_i:=\begin{cases}
 -\alpha_i/\deg V_i& \text{ if } V_i \text{ is of type } \delta\\
 \deg V_i/\alpha_i& \text{ if } V_i \text{ is of type } \delta'
 \text{ and } \alpha_i\not=0\\
 0& \text{ otherwise}
\end{cases}
\end{equation}
for any $i=1,\dots,N$.

It follows that
\begin{equation}\label{eq_products}
\prod_{\delta}{\vphantom{\prod}}^\prime (-\alpha_i/\deg
V_i)\prod_{\delta'}{\vphantom{\prod}}^\prime (\deg
V_i/\alpha_i)=\prod_{\delta}{\vphantom{\prod}}^\prime
(-\tilde\alpha_i/\deg V_i)\prod_{\delta'}{\vphantom{\prod}}^\prime
(\deg V_i/\tilde\alpha_i),
\end{equation}
 where the symbol $\prod'$ means that
the product is taken over non-zero values of $\alpha_i$; the
products are taken over all vertices of types indicated by the
subscript.

Consider the case when there are no zero coupling constants in
$\vec{\alpha}$  (and hence, in $\vec{\tilde\alpha}$). The
reduction of the general situation to the named one will be
discussed in the next Section.

The identity \eqref{eq_products} implies that the only way of
having $\Phi(\vec\alpha)\neq \Phi(\vec{\tilde\alpha})$ under the
assumption of isospectrality is to have a number of values
$\sigma_i$ redistributed somehow between graph vertices of
different types.

We will henceforth assume that every edge of the graph
$\Gamma_\delta$ is of ``mixed'' type (i.e., its endpoints have
different types) and consider three different cases separately.
The general case can be reduced to this one, see Section 4 for
details.

\subsubsection*{A vertex of valence 2} Assume w.l.o.g. that $V$ is
a $\delta$ type vertex of valence 2 (the case of $\delta'$ type
can be treated along the same lines). Consider the following
special choice of $G\in\mathcal G$: take an alpha-loop at each
graph vertex of $\delta'$ type and the alpha-loop at the vertex
$V$. Take a regular loop at each other vertex. In order to fix $G$
uniquely, we use the marking provided by Lemma \ref{Lemma_marking}
for the graph $\Gamma$ under the assumption that the vertex $V$ is
taken as a root; clearly, since at $V$ subgraph $G$ has an
alpha-loop, any other $\delta$ type vertex will then contribute a
loop of a unique weight to $G$.

Consider all subgraphs $\tilde G\in\mathcal G$ which are
equivalent to $G$ in the following sense: $\tilde
G_{\bar\alpha}\sim G_{\bar\alpha}$ under the usual decomposition
$G=G_{\alpha}+G_{\bar\alpha}$ and $\tilde G=\tilde G_\alpha+\tilde
G_{\bar\alpha}$. This set of subgraphs $\tilde G$ can be
explicitly described. Note that any edge $e$ incident to the
vertex $V$ contributes alpha-loops associated to both its
endpoints to $G$. For a fixed $e$ replace now these two
alpha-loops by the corresponding directed cycle of two vertices.
Clearly, the subgraph $\tilde G_e$ thus constructed is equivalent
to $G$ since the weight of a two-cycle is equal to 1.

There are no other subgraphs $\tilde G\in\mathcal G$ equivalent to
$G$. To prove this claim, one has to note that any subgraph
equivalent to $G$ must not only have the same collection of
tangents $\tan l_j\mu$, but also the same combined power of $\mu$
in its weight (cf. Proposition \ref{Prop_M}). Since all graph
vertices of $\delta$ type carry loops with weights of the form
$\mu\tan l_j\mu$, whereas vertices of $\delta'$ type have loops
with $\mu^{-1}\tan l_j\mu$ weights, $G$ by construction has the
factor $\mu^{\#(\delta)-1}$ in its weight, where $\#(\delta)$ is
the number of $\delta$ type vertices of $\Gamma$. It follows that
any subgraph equivalent to $G$ must have exactly the same number
of loops coming from $\delta$ type vertices, i.e., $\#(\delta)-1$.
Since the selection of edge lengths appearing in tangents is fixed
by the choice of $G$ and at the same time there are no edges of
$\delta-\delta$ and $\delta'-\delta'$ type in $\Gamma$ by
assumption, this implies: away from $V$ and the vertices adjacent
to it $\tilde G$ should be exactly the same as $G$. Trivial
combinatorics now completes the proof.

By Theorem \ref{Thm_n_and_s} one then easily obtains the following
formula linking together coupling constants of two isospectral
Laplacians:
\begin{equation}\label{eq_balance}
\alpha_V-\sum_{V' \text{ adjacent to }V} \frac
1{\alpha_{V'}}=\tilde\alpha_V-\sum_{V' \text{ adjacent to }V}
\frac 1{\tilde\alpha_{V'}},
\end{equation}
where $\alpha_V$ is the coupling constant at the vertex $V$. We
will henceforth refer to this condition as to \emph{the balancing
condition at the vertex $V$}.

One can go one step further. Consider any of the two subgraphs
$\tilde G_e$ equivalent to $G$ described above. Modify $\tilde
G_e$ in the following way. Replace the directed two-cycle
corresponding to the edge $e$ of $\Gamma$ by two regular loops in
a way such that there are no repeating weights in the subgraph
loops still. There clearly exists a unique way of doing so. Denote
the subgraph thus constructed by $\hat G_e$. The weight of $\hat
G_e$ contains exactly the same power of $\mu$ as that of $\tilde
G_e$ and $G$, differing from them in two additional tangents. By
inspection, the weights of $\hat G_e$ for \emph{both} edges $e$
incident to the vertex $V$ contain exactly the same collection of
tangents. An argument similar to the one presented above shows
that there are no further subgraphs equivalent to $\hat G_e$.
Theorem \ref{Lemma_ness_and_suff} applied to the class of
equivalence $\gamma$ such that $\hat G_e\in\gamma$ now implies
that
$$
\sum_{V' \text{ adjacent to }V} \frac 1{\alpha_{V'}}=\sum_{V'
\text{ adjacent to }V} \frac 1{\tilde\alpha_{V'}},
$$
and thus $\alpha_V=\tilde\alpha_V$ by the balancing condition
\eqref{eq_balance} for the vertex $V$ considered.

\subsubsection*{A vertex of valence 1} Now consider the case of a
boundary vertex $V$ of $\delta$ type (the case of $\delta'$ type
is treated in a similar way).

The balancing condition \eqref{eq_balance} is obtained as in the
case of $\deg V=2$ discussed above, verbatim. The difference with
the latter case is that here one cannot take the second step
described above. This follows from the fact that for a boundary
vertex there exists no possibility to construct $\hat G_e$: any
attempt at doing so results in a subgraph having two loops with
exactly the same weights at adjacent vertices. On the other hand,
such subgraphs have to be eliminated by Definition
\ref{def_cancellation} and thus do not belong to $\mathfrak{G}$.

\subsubsection*{A vertex of valence greater than 2} Finally assume
that $V$ is a $\delta$ type (w.l.o.g.: the case of $\delta'$ is
treated similarly) vertex of higher valence.

Proceeding as above, one gets the balancing condition
\eqref{eq_balance}.

As in the case of $\deg V=2$, one can go one step further. In
order to do so, note that in the case of higher valences there are
$C_d^2$ non-directed 2-paths through the vertex $V$, where $d=\deg
V$. Each of these paths gives rise to a pair of \emph{equivalent}
subgraphs $\hat G_e$, $\hat G_{e'}$, where $(e,e')$ is the
corresponding path through $V$. Moreover, it is easily seen that
for different paths $e,e'$ one arrives at different equivalence
classes $\gamma\in\mathfrak G$. A straightforward argument shows
that by Theorem \ref{Thm_n_and_s} one finally arrives at the
following system of $C_d^2$ linear equations on $d$ variables:
$$
x_e+x_{e'}=\tilde x_e+\tilde x_{e'} \text{ for any path } (e,e')
\text{ through } V,
$$
the new variables being introduced as follows:
$$
x_e:=\frac 1{\alpha_{V_e}};\quad \tilde x_e:=\frac 1{
\tilde\alpha_{V_e}},
$$
where $V_e$ is a vertex such that together with $V$ it constitutes
the pair of endpoints of the edge $e$, incident to $V$. Clearly
this system admits the only solution $x_e=\tilde x_e$ for any $e$
incident to the vertex $V$, provided that $d>2$.

Therefore, in the case of valences higher than 2 one not only gets
$\alpha_V=\tilde\alpha_V$ for the vertex $V$ itself as in the case
of $\deg V=2$, but also automatically
$\alpha_{V'}=\tilde\alpha_{V'}$ for any vertex $V'$ adjacent to
$V$.

The argument presented above in fact applies to an arbitrary graph
$\Gamma_\delta$ rather than to just a tree. One has the following

\begin{thm}\label{thm_uniqueness_nonzero}
Assume that $A_{\vec{\alpha}}$ and $A_{\vec{\widetilde{\alpha}}}$
are two Laplacians on an arbitrary compact metric graph
$\Gamma_{\delta}$ with all edges (except loops) of mixed type.
Suppose that the number of vertices is greater than
two\footnote{Clearly the degenerate case of $N=2$ leads to trivial
isospectrality since one can always swap both coupling constants
which leads to the same graph Laplacian.}. Assume that all edge
lengths are rationally independent. Let $A_{\vec{\alpha}}$ and
$A_{\vec{\widetilde{\alpha}}}$ be isospectral for $\vec\alpha$ and
$\vec{\tilde\alpha}$  such that  $\alpha_i\neq 0$ (and hence
$\tilde\alpha_i\neq 0$) for all $i=1,\dots,N$. Then
$\alpha_i=\tilde \alpha_i$ for all $i=1,\dots,N$.
\end{thm}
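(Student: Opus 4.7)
The plan is to apply Theorem~\ref{Thm_n_and_s} and carry out, for an arbitrary graph rather than just a tree, the vertex--by--vertex analysis developed in the three subsections immediately preceding the theorem. Without loss of generality one may assume $\Gamma_\delta$ connected, since otherwise the spectrum splits over components and each component is handled separately; the assumption $N>2$ together with the footnote then rules out any degenerate two-vertex component having the trivial swap isospectrality.

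At each vertex $V$ one fixes the distinguished spanning subgraph $G\in\mathcal G$ described earlier: an alpha-loop at $V$, alpha-loops at every $\delta'$ type vertex, and at each remaining $\delta$ type vertex a regular loop whose weight is prescribed by an admissible marking of $\Gamma_\delta$ furnished by Lemma~\ref{Lemma_marking} (case (ii) rooted at $V$ for trees, case (i) for non-trees). Rational independence of edge lengths together with the fact that regular loops at $\delta$ and $\delta'$ type vertices carry different powers of $\mu$ in their weights pins down the equivalence class of $G$ in $\mathfrak G$ to consist of exactly $G$ itself and the $\deg V$ modifications $\tilde G_e$ obtained by replacing the pair of alpha-loops along an edge $e$ incident to $V$ by a directed two-cycle. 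Substituting into Theorem~\ref{Thm_n_and_s} and cancelling the $\delta'$-factors in $\Phi(\vec\alpha)$ against the alpha-loop weights in $w(G_\alpha)$ and $w(\tilde G_{e,\alpha})$ produces the balancing condition~\eqref{eq_balance} at $V$. If $\deg V\ge 2$, the further construction $\hat G_e$, in which the two-cycle along $e$ in $\tilde G_e$ is re-opened into two regular loops of distinct weights in the unique manner that avoids repeated weights, gives, for each 2-path $(e,e')$ through $V$, an equivalence class in $\mathfrak G$ containing precisely $\hat G_e$ and $\hat G_{e'}$, whence
\[
\frac{1}{\alpha_{V_e}}+\frac{1}{\alpha_{V_{e'}}}=\frac{1}{\tilde\alpha_{V_e}}+\frac{1}{\tilde\alpha_{V_{e'}}}.
\]
When $\deg V=2$ this single additional equation combined with~\eqref{eq_balance} forces $\alpha_V=\tilde\alpha_V$; when $\deg V>2$ the collection of $\binom{\deg V}{2}$ such equations is easily seen to have full rank $\deg V$ in the variables $1/\alpha_{V'}-1/\tilde\alpha_{V'}$ indexed by neighbours $V'$ of $V$, so that in addition $\alpha_{V'}=\tilde\alpha_{V'}$ holds for every such $V'$.

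It remains to treat boundary vertices. If $V$ has valence one, the assumption $N>2$ together with connectedness forces its unique neighbour $V'$ to have valence at least two, so that $\alpha_{V'}=\tilde\alpha_{V'}$ is already known from the preceding step; the balancing condition at $V$ then reduces to $\alpha_V-1/\alpha_{V'}=\tilde\alpha_V-1/\tilde\alpha_{V'}$, yielding $\alpha_V=\tilde\alpha_V$ at once. The main obstacle, and essentially the only point that genuinely goes beyond the tree analysis, is the verification that for a graph with cycles the equivalence classes containing $G$, $\tilde G_e$ and $\hat G_e$ do not pick up any extra members beyond those enumerated above: this is a purely combinatorial check, resting on the uniqueness of weights in the admissible marking provided by Lemma~\ref{Lemma_marking}(i), on the rational independence of the $l_j$, and on the clean separation of $\mu$-powers between the two vertex types exploited throughout the construction of $\mathfrak G$.
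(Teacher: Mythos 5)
Your proposal is correct and follows essentially the same route as the paper: the distinguished subgraphs $G$, $\tilde G_e$, $\hat G_e$ yield the balancing condition \eqref{eq_balance} and the $2$-path relations, internal vertices are settled first (with the rank argument for $\deg V>2$ matching the paper's system of $C_d^2$ equations), and boundary vertices are then handled by balancing against their already-determined internal neighbour. Your edge-indexed sums also implicitly reproduce the multiplicity-weighted condition \eqref{eq_balance_general} for multiple edges, so nothing essential is missing.
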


\begin{proof}
The case of a tree graph has in fact been proven above. Indeed,
one immediately obtains the claimed result for all internal
vertices. Then one uses balancing conditions at all boundary
vertices.

Consider the situation when $\Gamma_\delta$ has no multiple edges
and no loops. Then for any vertex $V$ of the graph there exists a
spanning tree $T$ such that every edge incident to $V$ belongs to
it. One can then proceed analogously to the case of a tree graph,
cf. the proof of Lemma \ref{Lemma_marking}.

The situation when $\Gamma_\delta$ is allowed to contain multiple
edges is somewhat different, although can be considered in much
the same way. The balancing condition \eqref{eq_balance} in
particular admits the form
\begin{equation}\label{eq_balance_general}
\alpha_V-\sum_{V' \text{ adjacent to }V} \frac
{\nu(V')}{\alpha_{V'}}=\tilde\alpha_V-\sum_{V' \text{ adjacent to
}V} \frac {\nu(V')}{\tilde\alpha_{V'}},
\end{equation}
where $\nu(V')$ is the multiplicity of edge connection betwen $V$
and $V'$, i.e., the number of edges having vertices $V$ and $V'$
as their endpoints. The rest of the proof remains virtually
unchanged.

Finally, one notes that the possible presence of loops does not
change the argument a single bit; balancing conditions at graph
vertices do not involve any loops-related information.
\end{proof}

\begin{rem}
Balancing conditions \eqref{eq_balance_general} at the first
glance look awkwardly non-linear. Nevertheless, passing over to
the variables $\sigma_j$ defined in \eqref{eq_sigma} one arrives
at the following systems of balancing equations:
$$
\deg V\sigma_V+\sum_{V'\text{ adjacent to
}V}\frac{{\nu(V')}\sigma_{V'}}{\deg V'}=\deg
V\tilde\sigma_V+\sum_{V'\text{ adjacent to
}V}\frac{{\nu(V')}\tilde\sigma_{V'}}{\deg V'}
$$
for all vertices $V$ of $\delta$ type and
$$
\frac{\deg V}{\sigma_V}+\sum_{V'\text{ adjacent to
}V}\frac{\nu(V')}{\deg V'\sigma_{V'}}=\frac{\deg
V}{\tilde\sigma_V}+\sum_{V'\text{ adjacent to
}V}\frac{\nu(V')}{\deg V' \tilde\sigma_{V'}}
$$
for all vertices $V$ of $\delta'$ type, which are linear with
respect to $\{\sigma_i\}$ and $\{1/\sigma_i\}$, respectively.
\end{rem}

We now sum up all the information obtained so far pertaining to
the question of whether or not isospectrality yields
\eqref{eq_phi} in the form of the following

\begin{cor}\label{cor_phi}
Assume that $A_{\vec{\alpha}}$ and $A_{\vec{\widetilde{\alpha}}}$
are two isospectral Laplacians on the graph $\Gamma_{\delta}$.
Then if $\Gamma_\delta$ is either

(i)  a graph with all vertices of $\delta$ type or a graph with
all vertices of $\delta'$ type,

or (ii)  a non-tree graph with vertices of different types,

or (iii)  a tree with all edges of mixed type and the additional
condition $\alpha_i\neq 0$ (and hence $\tilde\alpha_i\neq0$) for
all $i=1,\dots,N$,

condition \eqref{eq_phi} holds.
\end{cor}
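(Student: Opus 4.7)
The plan is to observe that each of the three cases of the corollary has essentially already been addressed during the preceding discussion, so the proof is mostly a bookkeeping exercise: I would collect the pieces into a single statement and indicate where each one comes from.

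For case (i), where every vertex has the same type, I would simply cite \cite{Yorzh3}. When every vertex is of $\delta$ type the function $\Phi$ is identically $1$ by its very definition \eqref{eq_def_phi}, so \eqref{eq_phi} is automatic. When every vertex is of $\delta'$ type, equality of the numbers of zero coupling constants at $\delta'$ vertices (built into Lemma \ref{Lemma_ness_and_suff} as a necessary consequence of isospectrality) together with the asymptotic analysis of $\det(M(\lambda)-B)$ carried out in \cite{Yorzh3} yields $\Phi(\vec\alpha)=\Phi(\vec{\tilde\alpha})$.

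For case (ii), where $\Gamma_\delta$ is not a tree but the vertex types are mixed, I would invoke Lemma \ref{Lemma_marking}(i). It produces an admissible marking of the vertices by incident edge lengths in which no proper spanning collection of loops carries the same multiset of weights. This marking translates directly into a spanning subgraph $G_0\in\mathcal G$ consisting of exactly $N$ non-alpha loops with pairwise distinct weights. The uniqueness part of Lemma \ref{Lemma_marking}(i) ensures that $G_0$ is the only element of $\mathcal G_{\bar\alpha}$ carrying this particular collection of trigonometric factors, so the equivalence class $\gamma_0\in\mathfrak G$ containing $G_0$ consists entirely of graphs with empty alpha-part. Hence $f_{\gamma_0}(\vec\alpha)=(-1)^{N(G_0)}$ is a nonzero constant independent of the coupling constants, and applying \eqref{eq_n_and_s} of Theorem \ref{Thm_n_and_s} to $\gamma_0$ immediately gives $\Phi(\vec\alpha)=\Phi(\vec{\tilde\alpha})$. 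I would still have to verify that $G_0$ is not eliminated by part I of Definition \ref{def_cancellation}, but this is automatic because admissibility of the marking forbids both endpoints of any non-loop edge from being marked with the same weight, so no two loops of $G_0$ attached to the endpoints of a single edge of $\Gamma_\delta$ carry equal weights.

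For case (iii), where $\Gamma_\delta$ is a tree with all edges of mixed type and all coupling constants nonzero, the much stronger conclusion $\alpha_i=\tilde\alpha_i$ for every $i$ has already been proved in Theorem \ref{thm_uniqueness_nonzero}, and \eqref{eq_phi} is an immediate consequence. The only genuine work therefore sits in case (ii); the main obstacle there is to see that the combinatorial marking of Lemma \ref{Lemma_marking} does in fact isolate an equivalence class $\gamma_0\in\mathfrak G$ on which $f_{\gamma_0}$ is a nonzero constant, after which everything else is a direct appeal to Theorem \ref{Thm_n_and_s}.
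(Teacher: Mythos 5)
Your proposal is correct and follows essentially the same route as the paper: case (i) is delegated to \cite{Yorzh3} (trivial for all-$\delta$ graphs), case (ii) rests on Lemma \ref{Lemma_marking}(i) producing a spanning collection of $N$ regular loops whose equivalence class $\gamma_0$ has $f_{\gamma_0}$ a nonzero constant so that \eqref{eq_n_and_s} forces $\Phi(\vec\alpha)=\Phi(\vec{\tilde\alpha})$, and case (iii) follows from the stronger conclusion of Theorem \ref{thm_uniqueness_nonzero}. You even supply a detail the paper leaves implicit, namely that admissibility of the marking prevents $G_0$ from being eliminated by part I of Definition \ref{def_cancellation}.
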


Tightness of this result is demonstrated by the following

\begin{eg}\label{A3}
Let $\Gamma_\delta=A_3$, i.e., the chain graph of three vertices,
with rationally independent edge lengths. Let the types of its
vertices be either $\delta-\delta'-\delta$ or
$\delta'-\delta-\delta'$. In both cases there exist non-trivial
isospectral configurations of coupling constants, leading to
$\Phi(\vec\alpha)=-\Phi(\vec{\tilde\alpha})$.
\end{eg}

\begin{proof}
A direct application of Theorem \ref{Thm_n_and_s} gives the
following description of all isospectral configurations for the
case $\delta-\delta'-\delta$:
$$
\vec\alpha=(a,\frac 2 a,0);\quad \vec{\tilde\alpha}=(0,-\frac 2
a,-a)\text { for all } a\in\mathbb R.
$$
The second case is analogous.
\end{proof}

\section{Graph reductions}

The aim of the present Section is to demonstrate how one can
reduce the problem of isospectrality for graph Laplacians defined
on $\Gamma_\delta$ to the consideration of ``smaller'', or trimmed
graphs. In particular, we will show how the general situation of
mixed $\delta$ and $\delta'$ type vertices can be reduced to the
case considered towards the end of the Section 3, i.e., to the
case when all graph edges are of mixed type.

The reductions we have in mind are actually threefold: (i)
trimming away a boundary vertex together with the edge it belongs
to; (ii) trimming away any internal edge, be it simple or
multiple; (iii) trimming away a loop attached to any graph vertex.

In all these three cases the analysis is in fact very similar. It
is based on explicit comparison of residues of
$\det(M(\lambda)-B)$ and $\det(M(\lambda)-\tilde B)$ using results
of Section 3 and in particular Theorem \ref{Thm_n_and_s}.

We start with the following setup. Let $\Gamma_\delta$ be an
arbitrary compact metric marked graph. Assume that two Laplacians
$A_{\vec\alpha}$ and $A_{\vec{\tilde\alpha}}$ are isospectral on
this graph. Further assume that the graph contains an edge $e$
with both endpoints of $\delta$ type. Consider the graph
$\Gamma_\delta^{(e)}=\Gamma_\delta-e$, i.e., the graph
$\Gamma_\delta$ with the edge $e$ removed. This means, that the
vertices $V_j$ and $V_k$ connected in the original graph by the
edge $e$ have been ``glued'' together to form the vertex
$V_{j,k}^{(e)}$ (of the same type $\delta$), whereas the rest of
the graph remains unchanged. We do allow the situation of multiple
edges, when $V_j$ and $V_k$ are connected in $\Gamma_\delta$ by
more than one edge. In this situation all these edges, but the
edge $e$, become nothing but loops of their respective lengths,
attached to the vertex $V_{j,k}^{(e)}$. We are going to argue that
under the assumption of isospectrality of $A_{\vec\alpha}$ and
$A_{\vec{\tilde\alpha}}$ one is guaranteed isospectrality of
$A_{\vec{\alpha_e}}^{(e)}$ and $A_{\vec{\tilde{\alpha_e}}}^{(e)}$,
where these two operators are graph Laplacians defined on
$\Gamma_\delta^{(e)}$ with vectors of coupling constants
$\vec\alpha_e$ and $\vec{\tilde{\alpha_e}}$, respectively. These
vectors in turn are from $\mathbb R^{N-1}$ and are constructed
based on $\vec\alpha$ and $\vec{\tilde\alpha}$ by the following
rule: a coupling constant at any unaffected vertex remains the
same, whereas the coupling constants at $V_{j,k}^{(e)}$ are
$\alpha_j+\alpha_k$ and $\tilde\alpha_j+\tilde\alpha_k$,
respectively.

The argument goes as follows. Having assumed isospectrality, one
can use the result of Lemma \ref{Lemma_ness_and_suff}. Having
divided both sides of \eqref{eq_lemma_ness_and_suff} by
$\Pi(\lambda)$ away from  poles of determinants, one can now
compute residues of both sides at the sequence of first order (by
the condition of rational independence of edge lengths) poles
$\mu_e(m)$, $m=1,2,\dots$, associated with the length $l$ of the
edge $e$. These poles are obviously located at zeroes of $\sin
l\mu$. The named residues then must coincide as well.

For any of the two determinants, one adds the $k$-th row of the
matrix $M(\lambda)-B$ multiplied by $\cos l\mu$ to the $j$-th one.
By Proposition \ref{Prop_M}, this cancels out singularities of all
matrix elements of the $j$-th row at the sequence $\mu_e(m)$. Then
the residues are calculated by passing to the limit as
$\mu\to\mu_e(m)$ in
$(\mu+\mu_e(m))(\mu-\mu_e(m))\det(M(\lambda)-B)$, where the factor
$(\mu-\mu_e(m))$ is introduced into the $k$-th row of the matrix
since by inspection all other rows are now regular at the sequence
of points $\mu_e(m)$. Thereafter, we add the $k$-th column to the
$j$-th one and, having noted that the $k$-th row now only contains
a single non-zero element, reduce the determinant to the one of a
matrix of lower dimension.

The procedure outlined above yields:
\begin{equation}\label{eq_residues}
\Phi(\vec\alpha)\det(M^{(e)}(\mu_e^2(m))-B_e)=
\Phi(\vec{\tilde\alpha})\det(M^{(e)}(\mu_e^2(m))-\tilde B_e)\quad
\forall m,
\end{equation}
where $M^{(e)}(\lambda)$ is the Weyl-Titchmarsh $M$-matrix of the
graph $\Gamma_\delta^{(e)}$ defined above, $B_e$ and $\tilde B_e$
are two diagonal matrices of coupling constants $\vec\alpha_e$ and
$\vec{\tilde{\alpha_e}}$, respectively.

Note that $\Phi(\vec\alpha)=\Phi_e(\vec\alpha_e)$ and
$\Phi(\vec{\tilde\alpha})=\Phi_e(\vec{\tilde{\alpha_e}})$ since
the set of $\delta'$ vertices and their respective coupling
constants are the same for $\Gamma_\delta$ and
$\Gamma_\delta^{(e)}$ by construction. Here the function $\Phi_e$
is defined by the same expression \eqref{eq_def_phi} as the
function $\Phi$, but for the modified graph $\Gamma_\delta^{(e)}$.

By Lemma \ref{Lemma_ness_and_suff} again, it remains to be seen
that the equality \eqref{eq_residues} holds everywhere in $\mathbb
C$ with the exception of the countable set of poles rather than
just at the sequence of points $\mu_e(m)$. In order to ascertain
this, we first multiply both sides of \eqref{eq_residues} by
$\Pi_e(\mu_e^2(m))$, where the function $\Pi_e$ is defined by the
same expression \eqref{Eq_Pi} as the function $\Pi$, but for the
graph $\Gamma_\delta^{(e)}$. By Theorem \ref{Thm_det}, one
immediately gets the following expansion in powers of $m$:
\begin{multline*}
\Phi_e(\vec{\alpha_e})\Pi_e(\mu_e^2(m))\det(M^{(e)}(\mu_e^2(m))-B_e)\\
=\sum_{j}m^j
\Psi_j(\vec{\alpha_e};l_1\mu_e(m),\dots,l_{n-1}\mu_e(m))
\end{multline*}
with functions $\Psi_j$ which are linear combinations (with
coefficients depending on $\vec{\alpha_e}$) of various products of
sines and cosines of \emph{different} arguments, belonging to the
set $\{l_1\mu_e(m), \dots, l_{n-1}\mu_e(m)\}$. Here
$l_1,\dots,l_{n-1}$ are (with a slight abuse of notation) the
collection of edge lengths pertaining to the graph
$\Gamma_\delta^{(e)}$ and are thus rationally independent.

Let $j_0$ be the topmost power of $m$ in the above representation
and consider the function
$\Psi_{j_0}(\vec{\alpha_e};l_1\mu_e(m),\dots,l_{n-1}\mu_e(m))$. On
the one hand, an application of \eqref{eq_residues} immediately
yields
\begin{multline*}
\lim_{m\to\infty}\Psi_{j_0}(\vec{\alpha_e};l_1\mu_e(m),\dots,l_{n-1}\mu_e(m))\\=\lim_{m\to\infty}\Psi_{j_0}(\vec{\tilde{\alpha_e}};l_1\mu_e(m),\dots,l_{n-1}\mu_e(m))
\end{multline*}
On the other hand, consider an analytic \cite{Shabat} function
$\Theta_{j_0}$ of $n-1$ complex variables parametrically depending
on $\vec{\alpha_e}$ which is defined in the following way:
\begin{equation}\label{eq_def_theta}
\Theta_{j_0}
(z_1,\dots,z_{n-1};\vec{\alpha_e}):=\Psi_{j_0}(\vec{\alpha_e};z_1,\dots,z_{n-1}).
\end{equation}
Note that this function is in fact $2\pi$-periodic with respect to
any of its variables restrained to the real line.

We will now use a result of \cite{Shulman} which says that for
$n-1$ rationally independent real values $\hat l_1,\dots,\hat
l_{n-1}$ (here we have put $\hat l_j:=l_j/2l$, where $l$ is the
length of the edge $e$) the set of points $(\{m\hat l_1\}, \dots,
\{m \hat l_{n-1} \})$, where $\{x\}$ denotes the fractional part
of $x$ and $m$ is a natural number, is dense in the unit cube
$\mathbb K_1\subset \mathbb R^{n-1}$.

Since $\mu_e(m)=\pi m/l$, one then has using Hartogs' theorem the
following identities for any real $z_1,\dots,z_n$ in $2\pi \mathbb
K_{1}\subset \mathbb R^{n-1}$ and a subsequence
$\{m_k\}_{k=1}^\infty$:
\begin{multline*}
\Theta_{j_0}(z_1,\dots,z_{n-1};\vec{\alpha_e})=\lim_{k\to\infty}
\Theta_{j_0}(2\pi \{\hat l_1 m_k\},\dots,\{\hat l_{n-1} m_k\};\vec{\alpha_e})=\\
\lim_{k\to\infty} \Theta_{j_0}(l_1 \mu_e(m_k),\dots,l_{n-1}
\mu_e(m_k);\vec{\alpha_e})=\\
 \lim_{k\to\infty}
\Psi_{j_0}(\vec{\alpha_e};l_1 \mu_e(m_k),\dots,l_{n-1}
\mu_e(m_k))=\\
\lim_{k\to\infty} \Psi_{j_0}(\vec{\tilde{\alpha_e}};l_1
\mu_e(m_k),\dots,l_{n-1} \mu_e(m_k))=
\Theta_{j_0}(z_1,\dots,z_{n-1};\vec{\tilde{\alpha_e}}),
\end{multline*}
where the last equality follows by reversing the first three.

Thus,
$$
\Theta_{j_0}(z_1,\dots,z_{n-1};\vec{\alpha_e})=
\Theta_{j_0}(z_1,\dots,z_{n-1};\vec{\tilde{\alpha_e}})
$$
everywhere in $2\pi\mathbb K_1$, from where by the corresponding
uniqueness theorem \cite{Shabat} it follows immediately, that the
last equality holds everywhere in $\mathbb C^{n-1}$. In
particular, it holds on the ray $z_1=l_1\mu$, $z_2=l_2 z_1/l_1$,
$\dots$, $z_{n-1}=l_{n-1}z_1/l_1$, which finally yields
$$
\Psi_{j_0}(\vec{\alpha_e};l_1\mu,\dots,l_{n-1}\mu)=\Psi_{j_0}(\vec{\tilde{\alpha_e}};l_1\mu,\dots,l_{n-1}\mu)\quad
\forall \mu\in\mathbb C.
$$
Now repeating the argument first for $\Psi_{j_1}$, where $j_1$ is
the second highest power of $\mu$, and then for all the other
consecutive powers, one obtains that
$$
\Psi_{j}(\vec{\alpha_e};l_1\mu,\dots,l_{n-1}\mu)=\Psi_{j}(\vec{\tilde{\alpha_e}};l_1\mu,\dots,l_{n-1}\mu)\quad
\forall \mu\in\mathbb C \text{ and }\forall j.
$$

Ultimately, writing down the corresponding expansion in powers of
$\mu$ for
$\Phi_e(\vec{\alpha_e})\Pi_e(\mu^2)\det(M^{(e)}(\mu^2)-B_e)$ and
comparing it with the one for
$\Phi_e(\vec{\tilde{\alpha_e}})\Pi_e(\mu^2)\det(M^{(e)}(\mu^2)-\tilde
B_e)$, one ascertains the identity sought:
\begin{equation}\label{eq_residues_finish}
\Phi_e(\vec\alpha_e)\Pi_e(\lambda)\det(M^{(e)}(\lambda)-B_e)=
\Phi_e(\vec{\tilde{\alpha_e}})\Pi_e(\lambda)\det(M^{(e)}(\lambda)-\tilde
B_e),
\end{equation}
for all $\lambda$, which completes the proof of the claim.

The corresponding result in the case of $e$ being an edge with
both endpoints of $\delta'$ type follows from a virtually
unchanged argument. The only bit that probably deserves a comment
is the following one. The identities
$\Phi(\vec\alpha)=\Phi_e(\vec\alpha_e)$ and
$\Phi(\vec{\tilde\alpha})=\Phi_e(\vec{\tilde{\alpha_e}})$ in this
setup follow from \eqref{eq_products}. This identity immediately
implies that the statement of Lemma \ref{Lemma_ness_and_suff}
holds if one replaces the factor $\Phi$ on both sides by the
function $\hat \Phi$, where
\begin{equation*}
\hat\Phi(\vec\alpha):=\prod_{V_j\text{ of }\delta\text{ type:
}\alpha_j\not=0}\frac {\deg V_j}{\alpha_j}.
\end{equation*}
Then the identities in question follow immediately since now the
set of $\delta$ type vertices and their respective coupling
constants are the same for $\Gamma_\delta$ and
$\Gamma_\delta^{(e)}$ by construction.

We now shift our attention to the case when $e$ is a loop attached
to the vertex $V$. The analysis of this situation is if anything
simpler than the one presented above. In this setup, one has to
consider the modified graph $\Gamma_\delta^{(e)}=\Gamma_\delta-V$,
that is, the graph $\Gamma_\delta$ with the vertex $V$ removed
together with all graph edges incident to it (including the loop
$e$, of course). The modified vector of coupling constants
$\vec{\alpha_e}$ is nothing but the vector $\vec{\alpha}$ with the
element pertaining to the vertex $V$ dropped (the same
modification yields $\vec{\tilde{\alpha_e}}$, of course). In order
to ensure that $\Gamma_\delta^{(e)}$ is connected, we additionally
require that the vertex $V$ in question either belongs to the
graph boundary or to a cycle of $\Gamma_\delta$.

Assume w.l.o.g. that $V$ is of $\delta$ type. The residue
calculation then trivially yields:
$$
\Phi_e(\vec\alpha_e)\det(M^{(V)}(\mu_e^2(m))-B_e)=
\Phi_e(\vec{\tilde{\alpha_e}})\det(M^{(V)}(\mu_e^2(m))-\tilde
B_e)\quad \forall m,
$$
where as above $B_e$ and $\tilde B_e$ are two diagonal matrices of
coupling constants $\vec\alpha_e$ and $\vec{\tilde{\alpha_e}}$,
respectively. The matrix $M^{(V)}$ however is not a
Weyl-Titchmarsh matrix of any graph since it is actually equal to
$M$ in which the row and the column pertaining to the vertex $V$
have been dropped.

It is nevertheless possible by the same line of argumentation as
above to ascertain the identity
$$
\Phi_e(\vec\alpha_e)\Pi_e(\lambda)\det(M^{(V)}(\lambda)-B_e)=
\Phi_e(\vec{\tilde{\alpha_e}})\Pi_e(\lambda)\det(M^{(V)}(\lambda)-\tilde
B_e)
$$
for all $\lambda\in\mathbb C$, where $\Pi_e$ has the same meaning
as in the analysis presented above. Then one is able to expand the
determinant as
$$
\det(M^{(V)}(\lambda)-B_e)=\det(M^{(e)}(\lambda)-B_e)+\Omega(\lambda),
$$
where $M^{(e)}(\lambda)$ is the Weyl-Titchmarsh matrix of the
graph $\Gamma_\delta^{(e)}$, whence by the linear independence
argument (using the fact that $\Omega(\lambda)$ is sure to contain
trigonometric functions with arguments not to be found in the
expression for $M^{(e)}$) one ultimately has the identity sought
$$
\Phi_e(\vec\alpha_e)\Pi_e(\lambda)\det(M^{(e)}(\lambda)-B_e)=
\Phi_e(\vec{\tilde{\alpha_e}})\Pi_e(\lambda)\det(M^{(e)}(\lambda)-\tilde
B_e)
$$
$\forall \lambda\in\mathbb C.$ In order to give precise
formulation of results obtained so far, we start with the
following

\begin{defn}\label{def_trimming}
Either of the following graph operations on $\Gamma_\delta$ will
be called an \emph{admissible trimming} of the graph:

(i) a removal of an edge connecting two different vertices $V$ and
$V'$ of the same type, $\Gamma_\delta^{(e)}=\Gamma_\delta-e$;

(ii) a removal of a vertex $V$ to which a loop $e$ is attached
provided that this vertex either belongs to the graph boundary or
to one of the graph cycles, $\Gamma_\delta^{(e)}=\Gamma_\delta-V$.
\end{defn}

Then the following Theorem holds.

\begin{thm}\label{thm_trimming}
Assume that $A_{\vec{\alpha}}$ and $A_{\vec{\widetilde{\alpha}}}$
are two Laplacians on the graph $\Gamma_{\delta}$. Assume that all
the edge lengths are rationally independent. Finally, let
$A_{\vec{\alpha}}$ and $A_{\vec{\widetilde{\alpha}}}$ be
isospectral. Then for any admissible trimming
$\Gamma_\delta^{(e)}$ of the graph $\Gamma_\delta$, the Laplacians
$A_{\vec{\alpha_e}}$ and $A_{\vec{\widetilde{\alpha_e}}}$ on the
trimmed graph are isospectral.

Here the vectors of coupling constants $\vec{\alpha_e}$ and
$\vec{\tilde{\alpha_e}}$ are: in the case of an admissible
trimming (i) equal to the coupling constants of $A_{\vec{\alpha}}$
and $A_{\vec{\widetilde{\alpha}}}$, respectively, for all vertices
except $V$ and $V'$ with the remaining ones being equal to
$\alpha_V+\alpha_{V'}$ and $\tilde\alpha_V+\tilde\alpha_{V'}$,
respectively; in the case of an admissible trimming (ii) equal to
the coupling constants of $A_{\vec{\alpha}}$ and
$A_{\vec{\widetilde{\alpha}}}$, respectively, for all vertices
except $V$.
\end{thm}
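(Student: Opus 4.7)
The plan is to start from Lemma \ref{Lemma_ness_and_suff}, which recasts isospectrality as the functional identity $\Phi(\vec\alpha)\Pi(\lambda)\det(M(\lambda)-B)\equiv\Phi(\vec{\tilde\alpha})\Pi(\lambda)\det(M(\lambda)-\tilde B)$, and extract from it an analogous identity for the trimmed graph by a residue calculation at the poles generated by the trimmed edge $e$ of length $l$, followed by an analytic-continuation step. Throughout, observe that for trimming of type (i) with both endpoints of $\delta$ type the set of $\delta'$ vertices is unchanged, so $\Phi(\vec\alpha)=\Phi_e(\vec{\alpha_e})$ by construction; in the $\delta'-\delta'$ subcase one argues symmetrically using the variant $\hat\Phi$ of Lemma \ref{Lemma_ness_and_suff} obtained from \eqref{eq_products}; the loop case (ii) is handled below.

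For case (i) I would compute residues of both sides at the first-order poles $\mu=\mu_e(m):=\pi m/l$ (zeros of $\sin l\mu$), which by rational independence of edge lengths really are of first order. Inspecting \eqref{Eq_Weyl_Func_Delta} (resp.\ \eqref{Eq_Weyl_Func_Delta'}), only the $j$th and $k$th rows of $M(\lambda)-B$ are singular at these points, with the singular contributions coupled exactly through the edge $e$. Adding the $k$th row multiplied by $\cos l\mu$ to the $j$th one kills all singularities in the $j$th row, after which the residue is obtained as $\lim_{\mu\to\mu_e(m)}(\mu-\mu_e(m))\det(M(\lambda)-B)$; adding the $k$th column to the $j$th column reduces the surviving determinant to that of an $(N-1)\times(N-1)$ matrix which, by direct identification with Proposition \ref{Prop_M} on $\Gamma_\delta^{(e)}$, is precisely $M^{(e)}(\mu_e^2(m))-B_e$ with the coupling constant at the merged vertex $V_{j,k}^{(e)}$ equal to $\alpha_j+\alpha_k$. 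This yields the pointwise identity \eqref{eq_residues} on the sequence $\{\mu_e(m)\}$.

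The hard part is promoting this sequential identity to a genuine identity in $\lambda\in\mathbb C$. After multiplying both sides by $\Pi_e(\mu_e^2(m))$ and invoking Theorem \ref{Thm_det}, I would expand the result as $\sum_j m^j\Psi_j(\vec{\alpha_e};l_1\mu_e(m),\dots,l_{n-1}\mu_e(m))$, where each $\Psi_j$ is a linear combination of products of sines and cosines of the listed arguments. For the topmost power $j_0$, extend $\Psi_{j_0}$ to an entire $2\pi$-periodic (in real directions) function $\Theta_{j_0}$ of $n-1$ complex variables as in \eqref{eq_def_theta}; Weyl/Shulman equidistribution, applied to the rationally independent numbers $l_j/(2l)$, makes $(\{l_1 m/(2l)\},\dots,\{l_{n-1}m/(2l)\})$ dense in the unit cube, so $\Theta_{j_0}(\,\cdot\,;\vec{\alpha_e})$ and $\Theta_{j_0}(\,\cdot\,;\vec{\tilde{\alpha_e}})$ agree on a dense subset of $2\pi\mathbb K_1$, hence on all of $\mathbb C^{n-1}$ by Hartogs' theorem and the uniqueness theorem for analytic functions. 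Restricting to the ray $z_j=l_j\mu$ and then peeling off successive $\Psi_{j}$ for descending $j$ gives the full identity on $\mathbb C$, which by Lemma \ref{Lemma_ness_and_suff} is isospectrality on $\Gamma_\delta^{(e)}$.

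For case (ii), when $e$ is a loop at $V$, the connectivity hypothesis (boundary or cycle vertex) guarantees that $\Gamma_\delta^{(e)}=\Gamma_\delta-V$ is connected. Taking residues at the poles of the loop's contribution ($\tan(\mu l/2)$ or $\cot(\mu l/2)$, according to the type of $V$) simply deletes the row and column of $V$, leaving the reduced determinant $\det(M^{(V)}(\lambda)-B_e)$. This $M^{(V)}$ is \emph{not} the Weyl-Titchmarsh matrix of $\Gamma_\delta^{(e)}$ — it still remembers, via the diagonal entries associated to the neighbours of $V$, the edges that used to join $V$ to the rest of the graph. The fix is to write $\det(M^{(V)}(\lambda)-B_e)=\det(M^{(e)}(\lambda)-B_e)+\Omega(\lambda)$: the residual $\Omega$ necessarily involves trigonometric functions of arguments $l_t\mu$ associated with the edges formerly incident to $V$, which are absent from the expansion of $\det(M^{(e)}-B_e)$. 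Running the same density/Hartogs machinery as in case (i), the linear independence of the relevant trigonometric monomials (again from rational independence of edge lengths) forces the $\Omega$ contributions on the two sides to match separately, and one concludes as before. I expect the analytic continuation step to be the principal technical hurdle; the residue calculus itself is essentially bookkeeping dictated by Proposition \ref{Prop_M}.
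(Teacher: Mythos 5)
Your proposal follows the paper's own argument essentially step for step: the residue calculation at the poles $\mu_e(m)$ via the row/column manipulations of Proposition \ref{Prop_M}, the identification of the reduced determinant with $M^{(e)}-B_e$ and the merged coupling constant $\alpha_j+\alpha_k$, the promotion from the sequence $\{\mu_e(m)\}$ to all of $\mathbb C$ via the expansion in powers of $m$, the equidistribution of $(\{m\hat l_1\},\dots,\{m\hat l_{n-1}\})$, Hartogs' theorem and the uniqueness theorem, and finally the decomposition $\det(M^{(V)}-B_e)=\det(M^{(e)}-B_e)+\Omega$ with the linear-independence argument in the loop case. The approach is correct and matches the paper's proof.
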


\begin{rem}
1. Note that a procedure of trimming away graph boundary vertices
was suggested in a similar context in \cite{Aharonov}. The latter
Theorem provides a much more general recipe of this procedure,
allowing in particular to deal with graph cycles.

2. Theorem \ref{thm_trimming} in the case of mixed $\delta$ and
$\delta'$ type vertices clearly allows to reduce the analysis of
isospectral Laplacians to the setup studied towards the end of
Section 3, i.e., when all graph edges are assumed to be of mixed
type.

3. Trimming away an edge when one of its endpoints $V$ has
$\alpha_V=\tilde\alpha_V=0$ yields more information, see Theorem
\ref{Thm_uniqueness_zero} below.
\end{rem}

The following immediate Corollary of Theorem \ref{thm_trimming}
demonstrates how one could use this result in order to prove
uniqueness (i.e., the absence of isospectral configurations of
coupling constants).

\begin{cor}\label{cor_trimming}
Assume that there exist either two different admissible trimmings
of the same type or a sequence of two consecutive admissible
trimmings of type (i) for the graph $\Gamma_\delta$. Let each of
these two trimmings result in a graph $\Gamma_\delta^{(e)}$ for
which the absence of isospectral configuration of coupling
constants is known. Then there are no isospectral configurations
for the graph $\Gamma_\delta$.
\end{cor}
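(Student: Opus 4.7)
The plan is to invoke Theorem \ref{thm_trimming} once or twice to transfer an assumed isospectrality on $\Gamma_\delta$ to isospectrality on the trimmed graph(s), and then use the hypothesized absence of isospectral configurations on the trimmed graph(s) to force $\vec\alpha = \vec{\tilde\alpha}$. Recall that Theorem \ref{thm_trimming} relates the two coupling vectors on $\Gamma_\delta$ to those on $\Gamma_\delta^{(e)}$ by elementary rules: a type (i) trimming removing an edge with endpoints $V,V'$ preserves all coupling constants except at $V$ and $V'$, which are combined into a single value $\alpha_V + \alpha_{V'}$ (respectively $\tilde\alpha_V + \tilde\alpha_{V'}$); a type (ii) trimming removing a vertex $V$ carrying a loop simply drops the coupling constant at $V$.

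For two distinct admissible trimmings of the same type, Theorem \ref{thm_trimming} applied independently to each yields isospectrality on both trimmed graphs, and hence by hypothesis the coupling vectors on each trimmed graph agree. In the type (ii) case, the two removed vertices are distinct, so the pointwise equalities from each trimming combine to give $\vec\alpha = \vec{\tilde\alpha}$ at once. In the type (i) case the two removed edges share at most one vertex; if their endpoint sets are disjoint the conclusion is immediate, while if they share a single vertex the pointwise equalities coming from one trimming together with the sum relation coming from the other still force equality at every remaining coordinate. For a sequence of two consecutive admissible trimmings of type (i), apply Theorem \ref{thm_trimming} twice in succession: the first transfers isospectrality to $\Gamma_\delta^{(e_1)}$, and the second transfers it to the twice-trimmed graph. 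Equality of coupling constants on the final graph yields, by unwinding the two successive sum relations in turn, equality first on $\Gamma_\delta^{(e_1)}$ and then on $\Gamma_\delta$.

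The main obstacle is a modest amount of combinatorial bookkeeping needed to verify that in every possible interaction pattern between the two trimmings (disjoint removed vertex sets, a single shared endpoint, or a chain of two consecutive type (i) trimmings involving a merged vertex) the resulting system of linear relations on the difference $\vec\alpha - \vec{\tilde\alpha}$ admits only the trivial solution. Once this verification is carried out, the proof reduces to a direct chase of definitions through Theorem \ref{thm_trimming}.
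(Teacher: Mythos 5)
The paper offers no written proof of this Corollary (it is announced as ``immediate'' from Theorem \ref{thm_trimming}), and your treatment of the first alternative --- two different admissible trimmings of the same type --- is exactly the evident intended argument and is correct: each trimming, via Theorem \ref{thm_trimming} and the uniqueness hypothesis on the trimmed graph, gives $\alpha_W=\tilde\alpha_W$ at every vertex outside the affected pair (resp.\ outside the removed vertex), and the two exception sets overlap in at most one vertex, at which the surviving sum relation $\alpha_V+\alpha_{V'}=\tilde\alpha_V+\tilde\alpha_{V'}$ then closes the gap. (You should still note the degenerate sub-case where the two type (i) trimmings remove two parallel edges joining the \emph{same} pair of vertices: both trimmings then produce the identical system of relations and nothing forces $\alpha_V=\tilde\alpha_V$ individually, so ``different'' must be read as ``merging different vertex pairs''.)

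The genuine gap is in your handling of the second alternative, the sequence of two consecutive type (i) trimmings. You assert that equality of the coupling vectors on the twice-trimmed graph can be propagated back ``by unwinding the two successive sum relations in turn'' to give equality first on $\Gamma_\delta^{(e_1)}$ and then on $\Gamma_\delta$. This step fails: a type (i) trimming replaces the pair $(\alpha_V,\alpha_{V'})$ by the single datum $\alpha_V+\alpha_{V'}$, and the relation $\alpha_V+\alpha_{V'}=\tilde\alpha_V+\tilde\alpha_{V'}$ is not invertible --- it never recovers $\alpha_V=\tilde\alpha_V$ and $\alpha_{V'}=\tilde\alpha_{V'}$ separately. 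A second trimming applied to $\Gamma_\delta^{(e_1)}$ acts on a graph in which $V$ and $V'$ are already merged into a single vertex carrying only the sum, so it is strictly downstream and cannot restore the lost resolution; even assuming uniqueness on \emph{both} $\Gamma_\delta^{(e_1)}$ and the twice-trimmed graph, the total information extracted is still only ``equality away from $\{V,V'\}$ plus the sum relation''. To close this case one needs an input external to Theorem \ref{thm_trimming} --- e.g.\ the multiset equality $S(\vec\alpha)=S(\vec{\tilde\alpha})$ of \eqref{eq_sigma} inherited from isospectrality, or a direct appeal to the conditions \eqref{eq_n_and_s} of Theorem \ref{Thm_n_and_s} --- and you must say which and verify that it separates $\alpha_V$ from $\alpha_{V'}$. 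As written, the consecutive case is not proved.
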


Results obtained in \cite[Section 5]{Yorzh3} now yield
\begin{cor}
If $\Gamma_\delta$ is a tree graph with all vertices of the same
type (i.e., either of type $\delta$ or of type $\delta'$), the
spectrum of a graph Laplacian defined on $\Gamma_\delta$ uniquely
determines all coupling constants.
\end{cor}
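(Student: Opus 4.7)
The plan is to proceed by induction on the number $N$ of vertices of the tree, using Corollary \ref{cor_trimming} as the inductive engine and the results of \cite[Section 5]{Yorzh3} to supply the base case. The observation underpinning the argument is that an admissible trimming of type (i) in Definition \ref{def_trimming} is applicable to \emph{every} edge of a tree whose vertices all share the same type, since any such edge connects two vertices of the same ($\delta$ or $\delta'$) type. Moreover, this trimming preserves both the tree structure (removing an edge of a tree and gluing its endpoints produces a tree with one fewer vertex) and the uniformity of vertex type (by construction the glued vertex $V_{j,k}^{(e)}$ retains the common type of $V_j$ and $V_k$).

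For the base case I would take $N=2$, a single edge joining two vertices of the same type; here the uniqueness of coupling constants from the spectrum is furnished by \cite[Section 5]{Yorzh3}. Should one wish to be self-contained, this smallest instance can also be verified directly via Theorem \ref{Thm_n_and_s}, as the determinantal description simplifies dramatically for a two-vertex graph.

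For the inductive step, assume the statement holds for all trees of the same-type kind with fewer than $N$ vertices, where $N \geq 3$. Given such a tree $\Gamma_\delta$ on $N$ vertices, it has $N-1 \geq 2$ edges, so we may select two distinct edges $e_1, e_2$. Each of the admissible trimmings $\Gamma_\delta^{(e_1)}$ and $\Gamma_\delta^{(e_2)}$ yields a tree on $N-1$ vertices, still with all vertices of the same type. By the inductive hypothesis, neither of these trimmed graphs admits non-trivial isospectral configurations of coupling constants. Corollary \ref{cor_trimming} (in the form of two \emph{different} admissible trimmings of the same type (i)) then forces the absence of non-trivial isospectral configurations on $\Gamma_\delta$ itself, which is precisely the claim.

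The principal obstacle is pinning down the base case and verifying that the trimming transformation behaves as advertised on trees: one must check that type-preserving gluing of an edge in a tree indeed yields a smaller tree and that the induction correctly chains together through Corollary \ref{cor_trimming}, which strictly requires \emph{two} distinct trimmings. The latter is why the inductive step is activated only from $N \geq 3$, with $N = 2$ absorbed into the base. A subsidiary point, easily checked, is that the two trimmings involved need not preserve any specific geometric shape (such as a chain or star) of the tree — we merely rely on them producing \emph{some} smaller same-type tree to which the induction hypothesis applies.
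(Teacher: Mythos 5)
Your overall strategy -- induct on the number of vertices, using Corollary \ref{cor_trimming} to pass from a tree to its type-(i) trimmings and the results of \cite[Section 5]{Yorzh3} to seed the induction -- is exactly the argument the paper intends (the paper itself offers nothing beyond the citation), and your verifications that a type-(i) trimming of a same-type tree is again a same-type tree, and that $N\geq 3$ supplies two distinct edges, are correct. The problem is your base case. For $N=2$ (a single edge with both endpoints of, say, $\delta$ type) the claim you need is \emph{false}: the operator is $-f''$ on $[0,l]$ with Robin parameters $\alpha_1,\alpha_2$ at the two ends, and the reflection $x\mapsto l-x$ shows that swapping $\alpha_1\leftrightarrow\alpha_2$ yields a unitarily equivalent, hence isospectral, Laplacian with a different coupling vector. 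The paper itself acknowledges this swap degeneracy for $A_2$ in the footnote to Theorem \ref{thm_uniqueness_nonzero}, and your own suggested check via Theorem \ref{Thm_n_and_s} confirms it rather than refuting it: for the two-vertex same-type chain one computes $\det(M(\lambda)-B)=-\mu^2+(\alpha_1+\alpha_2)\mu\cot\mu l+\alpha_1\alpha_2$, so the spectrum determines only the unordered pair $\{\alpha_1,\alpha_2\}$. Consequently the hypothesis of Corollary \ref{cor_trimming} -- that the trimmed graph is \emph{known to admit no isospectral configurations} -- fails whenever the trimmed graph is $A_2$, and your inductive step from $N=3$ down to $N=2$ does not go through.

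The repair is to take $N=3$ (the same-type chain $A_3$) as the base case and establish it directly, either from \cite[Section 5]{Yorzh3} or from Theorem \ref{Thm_n_and_s}: for the all-$\delta$ chain the equivalence classes of weights $\mu^2\cot\mu l_1\cot\mu l_2$ and $\mu^2$ give $\alpha_1+\alpha_2+\alpha_3=\tilde\alpha_1+\tilde\alpha_2+\tilde\alpha_3$ and $\alpha_1+\alpha_3=\tilde\alpha_1+\tilde\alpha_3$, hence $\alpha_2=\tilde\alpha_2$, and the remaining classes then pin down $\alpha_1$ and $\alpha_3$ individually (at least when $\alpha_2\neq 0$; when $\alpha_2=\tilde\alpha_2=0$ the middle vertex is a Kirchhoff vertex of valence $2$, the graph ``cleans'' to a single interval, and the swap degeneracy reappears -- so, as with Theorem \ref{Thm_bloody}, the corollary should be read modulo the cleaning convention of Remark \ref{cleansing}). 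With the base case placed at $N=3$, every trimming occurring in your induction produces a tree on at least $3$ vertices and the argument closes as you describe.
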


A further corollary of Theorem \ref{thm_trimming} shows that in
the process of graph trimming any cycles, multiple edges and loops
(note that graph cycles reduce to multiple edges, which in turn
reduce to loops under consecutive trimmings) are in fact helpful
if one wishes to exclude the potentially problematic situation of
Example \ref{A3}, whereby some coupling constants are allowed to
zero out.

\begin{cor}
Assume that $A_{\vec{\alpha}}$ and $A_{\vec{\widetilde{\alpha}}}$
are two Laplacians on the graph $\Gamma_{\delta}$. Assume that all
the edge lengths are rationally independent.  Let the graph
contain a vertex $V$ such that: (i) $V$ has a loop attached to it
and (ii) $\alpha_V=0$, $\tilde\alpha_V\neq 0$. Then
$A_{\vec{\alpha}}$ and $A_{\vec{\widetilde{\alpha}}}$ cannot be
isospectral.
\end{cor}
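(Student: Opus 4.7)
The plan is to derive from the hypothetical isospectrality a functional identity whose two sides have irreconcilable asymptotic behaviour as $\lambda\to-\infty$.

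Since the loop at $V$ makes $\Gamma_\delta$ not a tree, Corollary \ref{cor_phi} --- via its part (i) if all vertices of $\Gamma_\delta$ share a single type, or its part (ii) in the mixed case --- gives $\Phi(\vec\alpha)=\Phi(\vec{\tilde\alpha})$ under the hypothetical isospectrality. Combined with Lemma \ref{Lemma_ness_and_suff} this upgrades to the pointwise equality $\det(M(\lambda)-B)\equiv\det(M(\lambda)-\tilde B)$ for every $\lambda\in\mathbb C$. The residue calculation employed in the proof of Theorem \ref{thm_trimming} case (ii), applied at the poles of $m_{VV}$ contributed by the loop, yields in addition the minor-determinant identity
\[\Phi_e(\vec\alpha_e)\det(M^{(V)}(\lambda)-B_e)\equiv\Phi_e(\vec{\tilde\alpha}_e)\det(M^{(V)}(\lambda)-\tilde B_e),\]
where $M^{(V)}(\lambda)$ is the minor of $M(\lambda)$ obtained by deleting the row and column indexed by $V$. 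Tracking the relation between $\Phi$ and $\Phi_e$ (trivially an equality when $V$ is of $\delta$ type, with the extra factor $\deg V/\tilde\alpha_V$ when $V$ is of $\delta'$ type), the above implies that the two minor determinants $D(\lambda):=\det(M^{(V)}(\lambda)-B_e)$ and $\tilde D(\lambda):=\det(M^{(V)}(\lambda)-\tilde B_e)$ are equal when $V$ is of $\delta$ type, respectively proportional by the nonzero constant $\deg V/\tilde\alpha_V$ when $V$ is of $\delta'$ type.

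Expanding $\det(M(\lambda)-B)$ along the $V$-th row gives
\[\det(M(\lambda)-B)=(m_{VV}(\lambda)-\alpha_V)D(\lambda)+R_V(\lambda),\]
with $R_V(\lambda)=\sum_{j\neq V}m_{Vj}(\lambda)C_{Vj}(\lambda)$ independent of $\alpha_V$, and likewise for the tilded side. Setting $\alpha_V=0$, invoking $\det(M-B)\equiv\det(M-\tilde B)$ and substituting the proportionality between $\tilde D$ and $D$ produces after elementary algebra the identity
\[D(\lambda)\cdot P(\lambda)=\tilde R_V(\lambda)-R_V(\lambda)\qquad\forall\lambda\in\mathbb C,\]
where $P(\lambda)=\tilde\alpha_V$ in the $\delta$-case and $P(\lambda)=m_{VV}(\lambda)(1-\deg V/\tilde\alpha_V)+\deg V$ in the $\delta'$-case. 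The contradiction is obtained by setting $\lambda=-\tau^2$ and letting $\tau\to+\infty$, using the hyperbolic asymptotics of the entries of $M$ from Proposition \ref{Prop_M}: every diagonal entry $m_{ii}(-\tau^2)$ with $i\neq V$ grows polynomially in $\tau$ with a nonzero leading coefficient ($-\tau\deg V_i$ at $\delta$-vertices, $\deg V_i/\tau$ at $\delta'$-vertices), whereas every off-diagonal entry $m_{ij}(-\tau^2)$ decays exponentially fast. Consequently $D$ is asymptotically dominated by the product of its diagonal entries and does not decay faster than polynomially; $P$ is asymptotically nonzero --- either the constant $\tilde\alpha_V\neq 0$, or tending to $\deg V\neq 0$. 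On the other hand, every summand of $R_V$ contains both a factor $m_{Vj}$ and, within the associated minor, a further entry $m_{iV}$ from column $V$, both of which decay exponentially, so $R_V$ and $\tilde R_V$ are exponentially small. The left-hand side of the displayed identity thus behaves polynomially in $\tau$ while the right-hand side decays exponentially, which is impossible.

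The principal technical obstacle is the derivation of the minor-determinant identity in the $\delta'$-case, where one has to carefully track the relation between $\Phi_e$ and $\tilde\Phi_e$ using both Corollary \ref{cor_phi} and the constraint from Lemma \ref{Lemma_ness_and_suff} on the numbers of zero $\delta'$-type coupling constants in $\vec\alpha$ and $\vec{\tilde\alpha}$; thereafter the asymptotic comparison is routine.
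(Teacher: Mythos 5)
Your argument is sound in substance but takes a genuinely different, and much heavier, route than the paper's. The paper disposes of this corollary in two lines: assuming isospectrality, Theorem \ref{thm_trimming} (admissible trimming (ii), i.e.\ removal of the looped vertex $V$) produces isospectral Laplacians on $\Gamma_\delta-V$ whose coupling vectors are obtained by simply dropping the entries at $V$; since the number of zero coupling constants is an isospectral invariant (the result of \cite{Yorzh3} already quoted in the proof of Theorem \ref{Thm_uniqueness_zero}), and since $\vec\alpha$ loses a zero while $\vec{\tilde\alpha}$ loses a non-zero, the trimmed pair violates this invariant --- contradiction. Your proof replaces the appeal to that invariant by a direct asymptotic analysis of the determinant identity at $\lambda=-\tau^2$, which is perfectly valid (the diagonal dominance of $D$, the exponential smallness of $R_V$ and $\tilde R_V$, and the non-vanishing of $P$ all check out), but note that you do not actually save any machinery: you still need the extension of the residue identity from the pole sequence $\mu_e(m)$ to all of $\mathbb C$, which is the genuinely hard (several-complex-variables) part of the proof of Theorem \ref{thm_trimming}. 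One slip worth flagging: computing the residue of $\Phi(\vec\alpha)\det(M-B)=\Phi(\vec{\tilde\alpha})\det(M-\tilde B)$ at the loop-induced poles of $m_{VV}$ gives directly $\Phi(\vec\alpha)\,D=\Phi(\vec{\tilde\alpha})\,\tilde D$, so that once \eqref{eq_phi} is granted one has $D\equiv\tilde D$ in \emph{both} cases; your proportionality factor $\deg V/\tilde\alpha_V$ in the $\delta'$ case comes from transplanting the paper's formula written in terms of $\Phi_e$, which the paper states only for $\delta$-type $V$ (where $\Phi_e(\vec{\alpha_e})=\Phi(\vec\alpha)$ trivially). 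The error is harmless here, since your $P(\lambda)\to\deg V\neq0$ still yields the contradiction, but the correct identity makes the $\delta'$ branch literally identical to the $\delta$ branch and removes the ``principal technical obstacle'' you describe at the end.
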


\begin{proof}
Assume the opposite. Then Theorem \ref{thm_trimming} provides
isospectrality for the corresponding graph Laplacians on
$\Gamma_\delta-V$. One immediately arrives at a contradiction,
since isospectrality requires that the total number of zero
coupling constants has to be the same.
\end{proof}

The following Example demonstrates how Theorem \ref{thm_trimming}
can be used to single out isospectral configurations provided that
these in fact exist.

\begin{eg}
Consider the graph $\Gamma_\delta$ which is a pure cycle of 4
vertices with rationally independent edge lengths. Let all
vertices be of type $\delta$.

On the one hand, it is known \cite{Yorzh3} that in this situation
there are isospectral configurations of coupling constants,
namely, for an arbitrary non-zero $a\in\mathbb R$,
$\vec\alpha=(a,-a,a,-a)$, $\vec{\tilde\alpha}=(-a,a,-a,a)$, and
for these configurations only the corresponding graph Laplacians
are isospectral.

On the other hand, Theorem \ref{thm_trimming} and Corollary
\ref{cor_trimming} immediately imply, that either there are no
isospectral configurations by the result \cite{Yorzh3} pertaining
to the case of odd cycles, or
$\alpha_1+\alpha_2=0=\tilde\alpha_1+\tilde\alpha_2$ (in which case
the graph $\Gamma_\delta^{(e)}$ reduces by the procedure of
cleaning, see \cite{Aharonov}, to a cycle of two vertices for
which one might easily swap the coupling constants with no effect
on the spectrum as explained above). The remaining three
admissible trimmings of $\Gamma_\delta$ leave
$\alpha_2+\alpha_3=0=\tilde\alpha_2+\tilde\alpha_3$,
$\alpha_3+\alpha_4=0=\tilde\alpha_3+\tilde\alpha_4$ and finally
$\alpha_1+\alpha_4=0=\tilde\alpha_1+\tilde\alpha_4$ for possible
isospectral configurations. These four relations clearly yield
precisely the result of \cite{Yorzh3}.

\end{eg}

The remaining part of the paper is devoted to the analysis of
isospectrality for graph Laplacians in the situation when all
graph edges of $\Gamma_\delta$ are of mixed type, i.e., when for
each edge one of the endpoints is of type $\delta$ whereas the
other one is of $\delta'$ type. The general situation can be
reduced to the one outlined by  consecutive applications of
Theorem \ref{thm_trimming}. As no further trimming is possible
once we have arrived at a graph of this class, one has no choice
but to resort to Theorem \ref{Thm_n_and_s}. Nevertheless, there is
room still for simplifications based on residue calculus.

We are going to argue that in this case there are no isospectral
graph Laplacians unless the underlying graph is \emph{essentially}
the graph of Example \ref{A3}, i.e., the chain graph of exactly 3
vertices.

We use the following notation throughout for the sake of brevity.
Having fixed two configurations of coupling constants,
$\vec\alpha$ and $\vec{\tilde\alpha}$, which by assumption lead to
isospectral graph Laplacians, we will say that a graph vertex $V$
is of $0\bar0$ class iff $\alpha_V=0$ and $\tilde\alpha_V\neq 0$;
is of $\bar00$ class iff $\alpha_V\neq0$ and $\tilde\alpha_V= 0$;
is of $00$ class iff $\alpha_V=0$ and $\tilde\alpha_V= 0$ and
finally is of $\bar0\bar0$ class iff $\alpha_V\neq 0$ and
$\tilde\alpha_V\neq 0$. In these terms, Theorem
\ref{thm_uniqueness_nonzero} reads: if all graph vertices are of
$\bar0\bar0$ class, one has $\vec\alpha=\vec{\tilde\alpha}$.

As demonstrated by Example \ref{A3}, the situation when coupling
constants at graph vertices are allowed to zero out can in fact
lead to complications, including the possibility that the
condition \eqref{eq_phi} is not satisfied. As shown by the named
example, this in fact is due to the presence of vertices of
$0\bar0$ and $\bar00$ classes.

We start however with consideration of $00$ class of vertices. The
residue-based analysis used above will allow us to reduce the
general situation to the one when one is sure to have no vertices
of this class. The problem we face is that residue calculation in
fact removes an edge rather than a vertex as such; on the other
hand, in the situation considered endpoints of any given edge are
of different types. It is not feasible to expect that a removal of
this edge will as above leave us with an $M$-function of
\emph{any} $\Gamma_\delta$. In order to give a description of what
is to happen here, introduce the following notion.

\begin{defn}\label{def_quasigraph}
(i) \emph{Removal of a mixed edge giving rise to a vertex of
$\delta'\to\delta$ type}. The graph $\Gamma_\delta$ is subject to
the following modification resulting in the \emph{quasigraph}
$\hat \Gamma_\delta^{(e)}$. An edge $e$ connecting vertices $V$
and $V'$ of types $\delta$ and $\delta'$, respectively, is
removed. The vertices $V$ and $V'$ are glued together to form a
\emph{quasivertex} $\hat V$ of $\delta'\to\delta$ type. Any other
edge $e'$ of length $l'$ connecting $V$ and $V'$ in
$\Gamma_\delta$ gives rise to a \emph{quasiloop} of the same
length $l'$ attached to $\hat V$. Any loop $\mathcal O$ attached
to $V'$ becomes a \emph{quasiloop} of the same length attached to
$\hat V$. Any edge $e'$ incident to $V'$ and $V''\neq V$ in
$\Gamma_\delta$ becomes a \emph{quasiedge} of the same length
connecting $\hat V$ and $V''$. The rest of $\Gamma_\delta$ remains
unchanged.

(ii) \emph{Removal of a mixed edge giving rise to a vertex of
$\delta\to\delta'$ type}. The graph $\Gamma_\delta$ is subject to
the following modification resulting in the \emph{quasigraph}
$\hat \Gamma_\delta^{(e)}$. An edge $e$ connecting vertices $V$
and $V'$ of types $\delta$ and $\delta'$, respectively, is
removed. The vertices $V$ and $V'$ are glued together to form a
\emph{quasivertex} $\hat V$ of $\delta\to\delta'$ type. Any other
edge $e'$ of length $l'$ connecting $V$ and $V'$ in
$\Gamma_\delta$ gives rise to a \emph{quasiloop} of the same
length $l'$ attached to $\hat V$. Any loop $\mathcal O$ attached
to $V$ becomes a \emph{quasiloop} of the same length attached to
$\hat V$. Any edge $e'$ incident to $V$ and $V''\neq V'$ in
$\Gamma_\delta$ becomes a \emph{quasiedge} of the same length
connecting $\hat V$ and $V''$. The rest of $\Gamma_\delta$ remains
unchanged.
\end{defn}

We point out that we do not allow the procedure outlined above to
be applied to more than a single edge $e$ of a mixed type. In
other words, we do not apply Definition \ref{def_quasigraph}
consecutively.

Note that the difference between the procedures (i) and (ii)
defined above is essentially the following: one either eliminates
the vertex $V'$ by moving it to the vertex $V$ along the edge $e$,
which forces all other edges incident to $V'$ in $\Gamma_\delta$
to acquire the prefix ``quasi'', or eliminates $V$ by moving it to
$V'$ with an analogous effect. This makes the notation
$\delta'\to\delta$ and $\delta\to\delta'$ chosen by us
self-explanatory. It is obvious that Definition
\ref{def_quasigraph} gives rise to two different types of
quasiedges, depending on the type of quasivertex these are
incident to. Since $\Gamma_\delta$ by assumption only contains
edges of mixed type, a quasiedge $e$ incident to a quasivertex of
$\delta'\to\delta$ type must be also incident to a vertex of
$\delta$ type; a quasiedge $e$ incident to a quasivertex of
$\delta\to\delta'$ type must be also incident to a vertex of
$\delta'$ type. We will further need the following

\begin{defn}\label{def_quasiM}
An $M$-matrix $\hat M^{(e)}(\mu)$ of a quasigraph
$\hat\Gamma_\delta^{(e)}$ is defined as follows (cf.
\eqref{Eq_Weyl_Func_Delta} and \eqref{Eq_Weyl_Func_Delta'}):

(i) If $V_k$ is a vertex of $\delta$ type or a quasivertex of
$\delta'\to\delta$ type, $\hat m_{jk}(\mu)=$
\begin{equation}\label{Eq_Weyl_Func_Delta_quasi}
\begin{cases}\scriptsize
\mu\Bigl(\sum_{e_t\in
\hat E'_k}\tan\mu l_t+ 2\sum_{e_t\in L_k}\tan\frac{\mu l_t}{2}   \\
-2\sum_{e_t\in \hat L_k}\cot\left(\frac\pi 4+\frac{\mu
l_t}{2}\right)-2\sum_{e_t\in \hat L'_k}\cot\frac{\mu
l_t}{2}\Bigr),
            & j=k,\\
-\mu\sum_{e_t\in \hat C_{kj}}\frac{1}{\cos\mu l_t} -\sum_{e_t\in
C'_{kj}}\frac{1}{\cos\mu l_t},
            & j\not=k,\  V_j \mbox{ is  }\\
            & \mbox{ adjacent to}\ V_k,\\
            0,
            & j\not=k,\  V_j \mbox{ is  }\\
            & \mbox{ not adjacent to } V_k;\\
     \end{cases}
\end{equation}

(ii) If $V_k$ is a vertex of $\delta'$ type or a quasivertex of
$\delta\to\delta'$ type, $\hat m_{jk}(\mu)=$
\begin{equation}\label{Eq_Weyl_Func_Delta'_quasi}
\begin{cases}
\frac{1}{\mu}\Bigl(\sum_{e_t\in \hat E'_k}\tan\mu
l_t-2\sum_{e_t\in L_k}\cot\frac{\mu
 l_t}{2}\\
-2\sum_{e_t\in \hat L_k}\cot\left(\frac\pi 4+\frac{\mu
l_t}{2}\right)+2\sum_{e_t\in \hat L'_k}\tan\frac{\mu
l_t}{2}\Bigr),  & j=k,\\
-\sum_{e_t\in C'_{kj}}\frac{1}{\cos\mu
l_t}-\frac{1}{\mu}\sum_{e_t\in \hat C_{kj}}\frac{1}{\cos\mu l_t},
            & j\not=k,\  V_j \mbox{ is  }\\
            & \mbox{  adjacent to }\ V_k,\\
            0,
            & j\not=k,\  V_j \mbox{ is }\\
            & \mbox{not adjacent to } V_k.\\
     \end{cases}
\end{equation}
Here $L_k$ is the set of loops at the (quasi)vertex $V_k,$ $\hat
L_k$ is the set of quasiloops at $V_k$ coming from elimination of
multiple edges, $\hat L'_k$ is the set of quasiloops at $V_k$
coming from loops at the eliminated vertex, $\hat E'_k$ is the set
of graph (quasi)edges incident to the (quasi)vertex $V_k$, $\hat
C_{kj}$ is the set of quasiedges connecting (quasi)vertex $V_k$
with $V_j$, and finally, $C'_{kj}$ is the set of graph edges
connecting (quasi)vertex $V_k$ with $V_j$. Surely, for any pair of
adjacent $V_k$, $V_j\in\hat\Gamma_\delta^{(e)}$ one either has
$\hat C_{kj}=\emptyset$ or $C'_{kj}=\emptyset$ since all
connections between them are either edges or quasiedges by
construction.
\end{defn}

We remark that for all $k$ and $j$ (including $k=j$) such that
$V_k$ and $V_j$ are ``normal'' graph vertices this definition is
nothing but \eqref{Eq_Weyl_Func_Delta} and
\eqref{Eq_Weyl_Func_Delta'}, where we have used the assumption
that all  graph edges are of mixed type. As for the situation when
either $V_k$ or $V_j$ is in fact a quasivertex, one notes that
quasiedges have (in terms of powers of $\mu$) a similar effect to
regular edges and loops of $\delta-\delta$ and $\delta'-\delta'$
types.

From the analytic point of view however the object just defined is
drastically different to the Weyl-Titchmarsh $M$-matrix of the
graph $\Gamma_\delta$ since it is not meromorphic in the $\lambda$
plane. It is nonetheless meromorphic on the Riemann surface of
$\sqrt{\lambda}$ (or, in other words, in $\mu$ plane), which is
sufficient for our needs.

One has the following

\begin{lem}\label{Lemma_removal_of_difficult_edge}
Assume that $\Gamma_\delta$ is a graph of more than two vertices
with all edges (except loops) of ``mixed'' type. Assume that
$A_{\vec\alpha}$ and $A_{\vec{\tilde\alpha}}$ are two isospectral
graph Laplacians on $\Gamma_\delta$. Let $V$ and $V'$ be $\delta$
and $\delta'$ endpoints, respectively, of an edge $e$. Finally let
$\hat \Gamma_\delta^{(e)}$ be constructed by either of two ways
described in Definition \ref{def_quasigraph}. Then the identity
\begin{equation}\label{eq_removal1}
\Phi(\vec\alpha)\det(\hat M^{(e)}(\mu)-\hat
B_e)=\Phi(\vec{\tilde\alpha}) \det(\hat M^{(e)}(\mu)-\hat{\tilde{
B_e}})
\end{equation}
holds for all $\mu\in\mathbb C$ except for the countable set of
poles of $\hat M^{(e)}(\mu)$. Here $\hat M^{(e)}$ is the
$M$-matrix of the quasigraph $\hat\Gamma_\delta^{(e)}$, whereas
$\hat B_e$ and $\hat{\tilde {B_e}}$ are diagonal $(N-1)\times
(N-1)$ matrices of coupling constants $\vec\beta$ and
$\vec{\tilde\beta}$, respectively, associated with all
(quasi)vertices of $\hat\Gamma_\delta$ and defined as follows:

(i) \emph{(if one removes $e$ to form a quasivertex $\hat V$ of
$\delta'\to\delta$ type)} $\beta_V=\alpha_V$,
$\tilde{\beta_V}=\tilde{\alpha_V}$ for any vertex $V$ of
$\hat\Gamma_\delta^{(e)}$ and $\beta_{\hat V}=\alpha_V+\mu^2
\alpha_{V'}$, $\widetilde{{\beta_{\hat V}}}=\tilde\alpha_V+\mu^2
\tilde\alpha_{V'}$ for the quasivertex $\hat V$;

(ii) \emph{(if one removes $e$ to form a quasivertex $\hat V$ of
$\delta\to\delta'$ type)} $\beta_V=\alpha_V$,
$\tilde{\beta_V}=\tilde{\alpha_V}$ for any vertex $V$ of
$\hat\Gamma_\delta^{(e)}$ and $\beta_{\hat V}=\alpha_{V'}+\mu^{-2}
\alpha_{V}$, $\widetilde{\beta_{\hat
V}}=\tilde\alpha_{V'}+\mu^{-2} \tilde\alpha_{V}$ for the
quasivertex $\hat V$.
\end{lem}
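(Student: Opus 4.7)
The plan is to follow the same blueprint used in the proof of Theorem \ref{thm_trimming}, adapting the residue calculation to the mixed-type edge setting. Starting from the master identity of Lemma \ref{Lemma_ness_and_suff},
$$\Phi(\vec\alpha)\Pi(\lambda)\det(M(\lambda)-B)\equiv\Phi(\vec{\tilde\alpha})\Pi(\lambda)\det(M(\lambda)-\tilde B),$$
I would divide through by $\Pi(\lambda)$ and focus on the countable sequence of poles $\mu_e(m)=(2m-1)\pi/(2l)$, $m\in\mathbb N$, at which $\cos l\mu$ vanishes. By the rational independence of edge lengths these are the only poles of either side that can be associated with the edge $e$.

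The crucial observation is that at $\mu_e(m)$ the singular parts are confined to the $2\times 2$ block indexed by $V$ and $V'$ and take the form $\mu\tan l\mu$, $\mu^{-1}\tan l\mu$ on the diagonal and $-\sec l\mu$ off the diagonal. In particular the identity $\tan^2-\sec^2\equiv -1$ automatically cancels all second-order poles in the full determinant, leaving only simple poles. In analogy with part (iii) of the proof of Lemma \ref{Lemma_entire}, I would then combine the rows (and dually the columns) corresponding to $V$ and $V'$ by an elementary operation of the form ``add $\mu\cos l\mu$ times one row to the other'' --- the two possible choices of direction corresponding exactly to the two constructions of Definition \ref{def_quasigraph}. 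This transfers the remaining singular behaviour into a single row; multiplying that row by $(\mu-\mu_e(m))$ and passing to the limit extracts the residue. A careful bookkeeping should then show that the resulting $(N-1)\times(N-1)$ determinant is precisely $\det(\hat M^{(e)}(\mu)-\hat B_e)$ as described by Definition \ref{def_quasiM}, and moreover that the coupling constant at the new quasivertex $\hat V$ acquires one of the factors $\alpha_V+\mu^2\alpha_{V'}$ or $\alpha_{V'}+\mu^{-2}\alpha_V$: indeed the diagonal $\tan l\mu$ entries at $V$ and $V'$ enter the $M$-matrix with prefactors $\mu$ and $\mu^{-1}$ respectively, by \eqref{Eq_Weyl_Func_Delta} and \eqref{Eq_Weyl_Func_Delta'}, so after the merge one of the two survives with a relative factor of $\mu^{\pm 2}$.

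At this point \eqref{eq_removal1} is established at the discrete sequence $\{\mu_e(m)\}_{m\in\mathbb N}$, and to extend it to the whole punctured $\mu$-plane I would copy verbatim the Shulman--Hartogs machinery from the proof of Theorem \ref{thm_trimming}: multiply both sides by an entire factor $\hat\Pi_e(\mu)$ tailored to the quasigraph, expand the products into finite sums of monomials in $\mu^{\pm 1}$ and in trigonometric functions of the arguments $l_j\mu$, invoke Shulman's equidistribution of $(\{m\hat l_1\},\dots,\{m\hat l_{n-1}\})$ in the unit cube together with Hartogs' theorem on separate analyticity to conclude that the coefficient of every distinct trigonometric monomial must coincide on both sides, and finally restrict back to the ray $z_j=l_j\mu$. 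The main obstacle I expect is the explicit residue computation itself: one must verify that the cancellations genuinely reproduce every term of Definition \ref{def_quasiM}, including the awkward $\cot(\pi/4+\mu l_t/2)$ contributions from quasiloops born out of multiple edges between $V$ and $V'$, and the non-trivial interaction of loops originally attached at $V$ with loops originally attached at $V'$ once these all become quasiloops at $\hat V$; and one must confirm that the $\mu^{\pm 2}$ prefactors in $\hat B_e$ emerge in exactly the form stated. Once this bookkeeping is in place, the analytic continuation step is a direct transcription of the argument in Section 4.
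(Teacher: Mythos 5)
Your proposal is correct and takes essentially the same route the paper intends: the paper omits the proof of this lemma altogether, stating only that it is a slight modification of the proof of Theorem \ref{thm_trimming}, and your plan --- residues at the zeros of $\cos l\mu$, merging the $V$ and $V'$ rows and columns (the two directions of the merge yielding the two cases of Definition \ref{def_quasigraph} and the $\mu^{\pm2}$ prefactors in $\hat B_e$), followed by the Shulman--Hartogs continuation --- is precisely that modification. One small correction: to cancel the diagonal singularity $\mu\tan\mu l$ the row multiplier must be $\mu\sin\mu l$ rather than $\mu\cos\mu l$ (a slip inherited from case (iii) of the paper's proof of Lemma \ref{Lemma_entire}); with that fixed, the bookkeeping you flag does close, e.g.\ the multiple-edge quasiloop weight comes out as $2\mu(\sin\mu l'-1)/\cos\mu l'=-2\mu\cot\bigl(\tfrac{\pi}{4}+\tfrac{\mu l'}{2}\bigr)$, matching Definition \ref{def_quasiM}.
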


The \emph{proof} of this Lemma is nothing but a slight
modification of the proof of Theorem \ref{thm_trimming}. We
therefore take the liberty of omitting it.

Noting that under the additional assumption of either
$\alpha_{V'}=\tilde\alpha_{V'}=0$ or $\alpha_V=\tilde\alpha_V=0$
either the matrix $\hat B_e$ or the matrix $\hat{\tilde {B_e}}$
does not depend on $\mu^2$, one arrives at

\begin{thm}\label{Thm_00}
Assume that $\Gamma_\delta$ is a graph of more than 2 vertices
with all edges (except loops) of mixed type. Assume that
$A_{\vec\alpha}$ and $A_{\vec{\tilde\alpha}}$ are two isospectral
graph Laplacians on $\Gamma_\delta$. Let $V$ and $V'$ be $\delta$
and $\delta'$ endpoints, respectively, of an edge $e$. Then the
graph Laplacians $A_{\vec{\alpha_e}}$ and
$A_{\vec{\tilde{\alpha_e}}}$ on the graph $\Gamma_\delta^{(e)}$
are isospectral for $\vec\alpha_e$ and $\vec{\tilde{\alpha_e}}$.
 Here

(i) if $\alpha_{V'}=\tilde\alpha_{V'}=0$, the graph
$\Gamma_\delta^{(e)}$ is nothing but $\hat\Gamma_\delta^{(e)}$ of
Definition \ref{def_quasigraph}, part (i), where the quasivertex
$\hat V$ of $\delta'\to\delta$ type is converted to a regular one
by assigning the type $\delta$ to it with the coupling constant
taken from the vertex $V$. $\vec\alpha_e$ and
$\vec{\tilde{\alpha_e}}$ are constructed by dropping $\alpha_{V'}$
and $\tilde{\alpha_{V'}}$ from $\vec\alpha$ and
$\vec{\tilde\alpha}$, respectively;

(ii) if $\alpha_{V}=\tilde\alpha_{V}=0$, the graph
$\Gamma_\delta^{(e)}$ is nothing but $\hat\Gamma_\delta^{(e)}$ of
Definition \ref{def_quasigraph}, part (ii), where the quasivertex
$\hat V$ of $\delta\to\delta'$ type is converted to a regular one
by assigning the type $\delta'$ to it with the coupling constant
taken from the vertex $V'$. $\vec\alpha_e$ and
$\vec{\tilde{\alpha_e}}$ are constructed by dropping $\alpha_{V}$
and $\tilde{\alpha_{V}}$ from $\vec\alpha$ and
$\vec{\tilde\alpha}$, respectively.
\end{thm}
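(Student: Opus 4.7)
My strategy is to upgrade the identity \eqref{eq_removal1} supplied by Lemma \ref{Lemma_removal_of_difficult_edge} (which lives on the quasigraph $\hat\Gamma_\delta^{(e)}$) to the analogous identity for the standard Weyl--Titchmarsh matrix $M^{(e)}$ of the properly trimmed graph $\Gamma_\delta^{(e)}$; that upgraded identity will then yield isospectrality of $A_{\vec{\alpha_e}}$ and $A_{\vec{\tilde{\alpha_e}}}$ directly by Lemma \ref{Lemma_ness_and_suff}.

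The first step is to invoke Lemma \ref{Lemma_removal_of_difficult_edge} in case (i). Under the hypothesis $\alpha_{V'} = \tilde\alpha_{V'} = 0$, the quasivertex coupling $\beta_{\hat V} = \alpha_V + \mu^2\alpha_{V'}$ reduces to $\alpha_V$ and thus becomes $\mu$-independent (and likewise $\widetilde{\beta_{\hat V}} = \tilde\alpha_V$), so that $\hat B_e$ and $\hat{\tilde{B_e}}$ are precisely the genuine constant coupling matrices $\mathrm{diag}(\vec{\alpha_e})$ and $\mathrm{diag}(\vec{\tilde{\alpha_e}})$ from the statement. Moreover, since $V'$ has zero coupling in both configurations it does not contribute to the product $\Phi$, whence $\Phi(\vec\alpha) = \Phi(\vec{\alpha_e})$ and $\Phi(\vec{\tilde\alpha}) = \Phi(\vec{\tilde{\alpha_e}})$ (the right-hand sides being the analogous products over the $\delta'$-vertices of $\Gamma_\delta^{(e)}$). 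Consequently \eqref{eq_removal1} assumes the form
\begin{equation*}
\Phi(\vec{\alpha_e})\det\bigl(\hat M^{(e)}(\mu) - \hat B_e\bigr) = \Phi(\vec{\tilde{\alpha_e}})\det\bigl(\hat M^{(e)}(\mu) - \hat{\tilde{B_e}}\bigr) \quad \forall \mu\in\mathbb C.
\end{equation*}

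The second, decisive step is to compare this with the sought identity for $M^{(e)}$ in place of $\hat M^{(e)}$. I would expand both $\det(\hat M^{(e)} - \hat B_e)$ and $\det(M^{(e)} - B_e)$ by Lemma \ref{Lemma_determinant} as sums indexed by spanning subgraphs of the (common) underlying topology. The $\vec{\alpha_e}$-dependent factor $f_G$ of any such subgraph $G$ is the product of coupling constants taken over those vertices of $G$ carrying alpha-loops; it is therefore \emph{identical} in both expansions. Only the trigonometric weights differ, tangents and secants appearing in the quasi expansion at edges and loops incident to $\hat V$ where the standard expansion carries cotangents and cosecants. Since the cancellation prescription of Definition \ref{def_cancellation} is driven in both settings by the interchangeable identities $\tan^2 x + 1 = \sec^2 x$ and $\cot^2 x + 1 = \csc^2 x$, one obtains a natural bijection of equivalence classes of subgraphs under which the coefficients $f_\gamma$ match. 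By the rational independence of edge lengths together with the density/Hartogs-type argument already employed in the proof of Lemma \ref{Lemma_removal_of_difficult_edge} (cf.~\eqref{eq_def_theta} and the surrounding discussion), the quasi identity then forces, for each class $\gamma$, the termwise equality $\Phi(\vec{\alpha_e}) f_\gamma(\vec{\alpha_e}) = \Phi(\vec{\tilde{\alpha_e}}) f_\gamma(\vec{\tilde{\alpha_e}})$, which by Theorem \ref{Thm_n_and_s} is precisely the necessary and sufficient condition for isospectrality of $A_{\vec{\alpha_e}}$ and $A_{\vec{\tilde{\alpha_e}}}$ on $\Gamma_\delta^{(e)}$.

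Case (ii), with $\alpha_V = \tilde\alpha_V = 0$, is handled by the symmetric variant: the quasivertex is now of $\delta\to\delta'$ type and is converted to a regular $\delta'$ vertex; one uses the modified product $\hat\Phi$ (defined in the remark following Lemma \ref{Lemma_removal_of_difficult_edge}) in place of $\Phi$ to secure $\hat\Phi(\vec\alpha) = \hat\Phi(\vec{\alpha_e})$, since the $\delta$-vertex structure remains untouched. The main obstacle is the combinatorial verification in the second step: one must carefully check that the bijection of equivalence classes under $\sim$ is well defined and that the $\vec{\alpha_e}$-dependent coefficients $f_\gamma$ genuinely coincide between the quasi and the standard expansions. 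This bookkeeping is essentially mechanical, amounting to inspecting the loop, quasiloop, edge, and quasiedge contributions prescribed by Definition \ref{def_quasiM} and tracking the formal exchange $\tan\leftrightarrow-\cot$, $\sec\leftrightarrow\csc$ at the edges incident to $\hat V$, but has to be performed with some care for the quasiloops originating from multiple edges (whose weights involve the shifted cotangent $\cot(\pi/4 + \mu l/2)$).
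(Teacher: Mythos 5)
Your proposal is correct and follows essentially the same route as the paper's own (sketched) proof: invoke Lemma \ref{Lemma_removal_of_difficult_edge}, observe that the zero coupling at the eliminated vertex makes $\hat B_e$, $\hat{\tilde{B_e}}$ constant and keeps $\Phi$ (resp.\ $\hat\Phi$) unchanged, expand the determinants over spanning subgraphs, use linear independence of the trigonometric weights to extract termwise conditions, and note that the cancellation procedure of Definition \ref{def_cancellation} agrees between the quasigraph and the genuinely trimmed graph, so that the extracted conditions are exactly \eqref{eq_n_and_s} for $\Gamma_\delta^{(e)}$. Your explicit use of $\hat\Phi$ in case (ii) is, if anything, slightly more careful than the paper's one-line claim about $\Phi$ there.
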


\begin{proof}[A sketch of the proof]
It suffices to check the conditions of ``if'' implication of
Theorem \ref{Thm_n_and_s}, i.e., to ascertain \eqref{eq_n_and_s}.

On the other hand, Lemma \ref{Lemma_removal_of_difficult_edge}
yields \eqref{eq_removal1}, and moreover within our assumptions
one clearly has $\Phi(\vec\alpha)=\Phi_e(\vec{\alpha_e})$,
$\Phi(\vec{\tilde\alpha})=\Phi_e(\vec{\tilde{\alpha_e}})$.

Repeating the argument leading to Theorem \ref{Thm_det} using
Definition \ref{def_quasiM}, one by the linear independence
argument, exactly as in the proof of Theorem \ref{Thm_n_and_s} (in
the process one surely has to redefine the function $\Pi(\mu)$
appropriately, see Lemma \ref{Lemma_entire} for details),
immediately obtains \eqref{eq_n_and_s}. The only part of this
argument that deserves a comment is the fact that the cancellation
procedure described in Definition \ref{def_cancellation} is
exactly the same for $\hat \Gamma_\delta^{(e)}$ as it would be for
$\Gamma_\delta^{(e)}$. This follows immediately from Proposition
\ref{Prop_M} and Definition \ref{def_quasiM}.
\end{proof}

Unfortunately, a straightforward application of this Theorem is
not enough to complete the job due to the possible presence of
$0\bar0$ and $\bar00$ class vertices. This becomes apparent if one
considers a star-graph of exactly 3 rays, one of the boundary
vertices being of $00$ class. The argument presented then reduces
the problem to $A_3$ which leaves the question of isospectrality
open for the named star graph. In fact, a direct application of
Theorem \ref{Thm_n_and_s} yields uniqueness in this case, thus
necessitating further in-depth analysis of the general situation.

First we employ the results of \cite{Yorzh3} which together with
Theorem \ref{Thm_00} and Theorem \ref{thm_uniqueness_nonzero}
allow to give a complete solution of the problem of isospectrality
provided that there are no vertices of $0\bar0$ and $\bar00$
classes.

\begin{thm}\label{Thm_uniqueness_zero}
Assume that $\Gamma_\delta$ is an arbitrary marked graph of more
than two vertices. Assume that $A_{\vec\alpha}$ and
$A_{\vec{\tilde\alpha}}$ are two isospectral graph Laplacians on
$\Gamma_\delta$. Moreover, assume that there are no vertices $V$
of valence 2 satisfying the condition $\alpha_V=\tilde\alpha_V=0$.
Then the following implication holds: if at a vertex $V$ one has
$\alpha_V=\tilde\alpha_V=0$, then for every graph vertex $W$
adjacent to $V$ the identity $\alpha_W=\tilde\alpha_W$ holds.
\end{thm}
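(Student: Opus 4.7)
Fix a neighbor $W$ of $V$ and an edge $e=(V,W)$. The strategy is to use the reduction machinery of Section 4 to collapse the edge $e$, and then isolate the coupling constant at the resulting merged vertex via the criterion of Theorem \ref{Thm_n_and_s}.

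First, I apply either Theorem \ref{thm_trimming} with the admissible trimming (i) of Definition \ref{def_trimming} (when the endpoints of $e$ are of the same type) or Theorem \ref{Thm_00} (when the endpoints are of different types, using the hypothesis $\alpha_V=\tilde\alpha_V=0$ precisely in order to convert the quasigraph of Definition \ref{def_quasigraph} into an ordinary graph). In either case the output is an isospectral pair of Laplacians on the reduced graph $\Gamma_\delta^{(e)}$ with $N-1$ vertices, in which the merged vertex $U$ inherits the type of $W$ and carries coupling constants precisely $\alpha_W$ and $\tilde\alpha_W$ on the two sides. This reduces the problem to showing $\alpha_U=\tilde\alpha_U$.

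Second, on $\Gamma_\delta^{(e)}$ I apply Theorem \ref{Thm_n_and_s} to two carefully chosen equivalence classes $\gamma_1,\gamma_2 \in \mathfrak{G}$. Using the marking of Lemma \ref{Lemma_marking} applied to $\Gamma_\delta^{(e)}$ with $U$ chosen as root, I construct a spanning subgraph of the corresponding modified graph in which every vertex other than $U$ carries a uniquely determined regular loop. I then take $\gamma_1$ to be the class obtained by additionally placing an alpha-loop at $U$, and $\gamma_2$ the analogous class in which $U$ carries instead a regular loop coming from one of its incident edges in the modified graph. The function $f_{\gamma_1}$ then depends linearly on $\alpha_U$ with a nonzero coefficient, while $f_{\gamma_2}$ is $\alpha_U$-free; writing down the two identities \eqref{eq_n_and_s} and taking their ratio cancels the $\Phi$-factor and gives $\alpha_U=\tilde\alpha_U$ directly.

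The hard part is verifying that the construction in the second step actually succeeds, i.e.\ that both selected classes $\gamma_1$ and $\gamma_2$ are nonempty elements of $\mathfrak{G}$ surviving the cancellation procedure of Definition \ref{def_cancellation}, and that the coefficient of $\alpha_U$ in $f_{\gamma_1}$ is nonzero after collecting all representatives of the class. This is where the hypothesis excluding a valence-2 vertex $V$ with $\alpha_V=\tilde\alpha_V=0$ enters in an essential way: the degenerate configuration of Example \ref{A3} arises from precisely such a vertex after reduction, and in that setting $U$ fails to have enough incident structure in $\Gamma_\delta^{(e)}$ to produce a viable $\gamma_2$ or to prevent $f_{\gamma_1}$ from degenerating. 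I expect the technical heart of the proof to be a case analysis on the valence and edge-type pattern at $V$, verifying in each situation that the hypothesis suffices to preserve at $U$ the structural richness required for the isolation argument.
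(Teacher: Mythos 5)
Your first step (collapsing the edge $e$ via Theorem \ref{thm_trimming} or, for a mixed edge, via Theorem \ref{Thm_00} using $\alpha_V=\tilde\alpha_V=0$) is exactly what the paper does. But your second step is not the paper's argument and, as written, contains a genuine gap that you yourself flag but do not close. The claim that $f_{\gamma_1}$ is linear in $\alpha_U$ alone and that $f_{\gamma_2}$ is $\alpha_U$-free rests on the equivalence classes $\gamma_1,\gamma_2$ being essentially singletons, and that is false in general: the theorem is stated for an \emph{arbitrary} marked graph, so $\Gamma_\delta^{(e)}$ may contain edges whose two endpoints carry loops of \emph{identical} weight (e.g.\ $-\mu\cot\mu l$ at both ends of a $\delta$--$\delta$ edge). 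Distinct spanning subgraphs that place such a loop at either endpoint have equal weight $w(G_{\bar\alpha})$ but different alpha-loop complements, so a single class $\gamma$ collects contributions from many coupling constants, and $f_{\gamma_1}$, $f_{\gamma_2}$ become multilinear expressions whose structure you would have to control. Moreover, the ratio step needs $f_{\gamma_2}(\vec\alpha)=f_{\gamma_2}(\vec{\tilde\alpha})\neq 0$ and implicitly $\Phi(\vec\alpha)=\Phi(\vec{\tilde\alpha})$, which is precisely what Example \ref{A3} shows can fail; you cannot assume it here. Finally, your explanation of where the no-valence-2 hypothesis enters ("not enough incident structure at $U$") is not an argument.

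The paper closes the proof by a much cleaner route that you should be aware of: by the asymptotic results of \cite{Yorzh3}, isospectrality forces the \emph{multiset} equality $S(\vec\alpha)=S(\vec{\tilde\alpha})$ of the normalized constants $\sigma_i$ of \eqref{eq_sigma}, both on $\Gamma_\delta$ and on the trimmed graph. Removing $e$ merges $V$ into $W$ and changes $\deg W$ to $\deg V+\deg W-2$, which differs from $\deg W$ exactly because $\deg V\neq 2$; hence in passing from $\hat S(\vec\alpha)$ to $\hat S(\vec{\alpha_e})$ the single entries $\sigma_W$ and $\tilde\sigma_W$ get multiplied by the same constant $\kappa\neq 1$ while all other entries are untouched. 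Requiring that the perturbed multisets still coincide forces $\sigma_W=\tilde\sigma_W$, i.e.\ $\alpha_W=\tilde\alpha_W$. This is where the valence-2 exclusion does its work ($\kappa\neq 1$), and it avoids the combinatorics of $\mathfrak{G}$ entirely. To salvage your approach you would need to carry out the deferred case analysis and, in particular, re-derive $\Phi(\vec\alpha)=\Phi(\vec{\tilde\alpha})$ in this setting, which is a substantial amount of the work you have left undone.
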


\begin{proof}
Let $V$ be a vertex of $00$ class. Theorem \ref{thm_trimming} and
Theorem \ref{Thm_00} justify the procedure of removing any edge
$e$ incident to this vertex, be it of $\delta-\delta$,
$\delta'-\delta'$ or $\delta-\delta'$ type, with absolutely the
same implications on the graph itself: the vertex $V$ joins $W$,
the other endpoint of $e$, whereas the type of $W$ and the
coupling constants $\alpha_W$ and $\tilde\alpha_W$ remain
unchanged. The corresponding graph Laplacians $A_{\vec{\alpha_e}}$
and $A_{\vec{\tilde{\alpha_e}}}$ are isospectral.

On the other hand, the results of \cite{Yorzh3} tell us that under
the condition of isospectrality vectors $\vec\alpha$ and
$\vec{\tilde\alpha}$ have the same number of zero components;
what's more, equal numbers of zero coupling constants pertaining
to vertices of $\delta'$ type (and hence of $\delta$ type) are
also guaranteed. Moreover, as in the proof of Theorem
\ref{thm_uniqueness_nonzero}, one has the set equality
$S(\vec\alpha)=S(\vec{\tilde\alpha})$ (see \eqref{eq_sigma} for
the definition of the set $S$). In our setting, the same number of
sigmas, say, $k$ is equal to zero in both $S$-sets. Let $\hat S$
be the set $S$, where all zero elements have been removed. Then
under the condition of isospectrality $\hat S(\vec\alpha)$ is
nothing but a transposition of the set $\hat
S(\vec{\tilde\alpha})$.

It remains to be seen that $\sigma_W=\tilde\sigma_W$. Assume the
opposite. Note that the procedure of removing the edge $e$ changes
the valence of $W$, since $V$ is by assumption not of valence 2.
It turns out that $\hat S(\vec\alpha_e)$ and $\hat
S(\vec{\tilde{\alpha_e}})$ are the same sets as $\hat
S(\vec\alpha)$ and $\hat S(\vec{\tilde \alpha})$, respectively,
with the only exception: the elements $\sigma_W$ and
$\tilde\sigma_W$ are replaced by $\kappa\sigma_W$ and
$\kappa\tilde\sigma_W$ with $\kappa\neq 1$, respectively. The
constant $\kappa$ here depends on the valences of $V$ and $W$
only.

On the other hand, $\hat S(\vec\alpha_e)$ and $\hat
S(\vec{\tilde{\alpha_e}})$ have to be transpositions of each other
by isospectrality of $A_{\vec{\alpha_e}}$ and
$A_{\vec{\tilde{\alpha_e}}}$. One easily convinces oneself that
this immediately leads to a contradiction, unless
$\sigma_W=\tilde\sigma_W$.
\end{proof}

\begin{rem}\label{cleansing}
The condition of the latter Theorem that there are no $00$ class
vertices of valence 2 in $\Gamma_\delta$ can be assumed w.l.o.g.
Indeed, if one has a $00$ vertex in $\Gamma_\delta$, either of
$\delta$ or $\delta'$ type, it follows that a function in the
domain of both $A_{\vec\alpha}$ and $A_{\vec{\tilde\alpha}}$ is
continuous together with its first derivative through the given
vertex. This means that the vertex itself can be removed from the
graph (cf. \cite{Aharonov}, where this procedure is called
\emph{cleaning}).
\end{rem}

It is now left to complete our analysis of graphs $\Gamma_\delta$
with all edges of mixed type. We will consider the situation when
there exist vertices of $0\bar0$ and/or $\bar00$ class.

The analysis we are about to present is probably not the simplest
way of obtaining the result sought, but in our view the most
transparent one. It is based on Lemma
\ref{Lemma_removal_of_difficult_edge}. As above, we assume
w.l.o.g. that $\Gamma_\delta$ contains no vertices of valence 2
which are of $00$ class.

We start by ascertaining the following corollary of the named
Lemma. Assume that a vertex $V'$ is of $\delta'$ type and of
$0\bar0$ class. Then a direct application of Lemma
\ref{Lemma_removal_of_difficult_edge} yields: any other $\delta'$
vertex $W'$ of the graph $\Gamma_\delta$ which is
$\delta$-adjacent to $V'$ (i.e., is adjacent to a $\delta$ type
vertex which is in turn adjacent to $V'$ or, in other words, is
located within a single $\delta$ type vertex from $V'$) is of
either $\bar00$ or $00$ class. Indeed, let $V'-W-W'$ form a chain
of $\delta'-\delta-\delta'$ vertices. Removing an edge between
$V'$ and $W$ to form a quasivertex $\hat V$ of $\delta'\to\delta$
type, use the part (i) of Lemma
\ref{Lemma_removal_of_difficult_edge} and compare the terms on
both sides of \eqref{eq_removal1} which involve the highest power
of $\mu$. These terms have to be equal
 due to the linear independence argument. Noting that
$\hat B_e$ is independent of $\mu$, whereas $\hat{\tilde{B_e}}$
contains the term $\tilde{\alpha_{V'}} \mu^2$ in the coupling
constant pertaining to $\hat V$, decompose the determinant on the
right hand side in the following way:
\begin{equation}\label{eq_det_decomposition}
\det(\hat M^{(e)}(\mu)-\hat{\tilde{ B_e}})=\det(\hat
M^{(e)}(\mu)-B_0)+\det(\hat M^{(e)}_V(\mu)-B_{V}),
\end{equation}
where the matrices $B_0$ and $B_{V}$ differ only in the coupling
constant pertaining to the quasivertex $\hat V$, this element
being equal to $\tilde\alpha_W$ and $\tilde{\alpha_{V'}} \mu^2$,
respectively (cf. Lemma \ref{Lemma_removal_of_difficult_edge}),
and $\hat M^{(e)}_V(\mu)$ is $\hat M^{(e)}(\mu)$ with both the
$\hat V$th row and column zeroed out.

From Theorem \ref{Thm_n_and_s} one knows that there must be a
vertex of $\delta'$ type and of $\bar 00$ class in
$\Gamma_\delta$. It then becomes apparent (cf. Definition
\ref{def_quasiM}) that the terms with the highest power in $\mu$
come from the second term on the r.h.s. of
\eqref{eq_det_decomposition} \emph{only}.

On the other hand, by Theorem \ref{Thm_det} (cf. the proof of
Theorem \ref{Thm_00}), one can single out the following subgraphs
$G\in\mathcal G$ which are sure to yield the topmost power of
$\mu$ on the l.h.s. of \eqref{eq_removal1}: pick an alpha-loop at
every $\delta'$ type vertex with non-zero coupling constant and a
regular loop at any other vertex of the graph
$\Gamma_\delta^{(mod)}$ (one uses Lemma \ref{Lemma_marking} to
ensure that subgraphs selected are in fact in
$\hat{\mathfrak{G}}$). The second term on the r.h.s. of
\eqref{eq_det_decomposition} can in turn be treated as the sum of
subgraph weights over subgraphs of $\mathcal G$ such that the
alpha-loop with the weight $\tilde{\alpha_{V'}} \mu^2$ is taken at
the vertex $\hat V$. It is now obvious that if
$\tilde\alpha_{W'}\neq 0$, there exists no subgraph yielding the
highest power of $\mu$ \emph{and} containing $\tan l_{WW'}\mu$,
where $l_{WW'}$ is the length of the edge $WW'$, on the r.h.s. of
\eqref{eq_removal1}, whereas this term exists on the l.h.s. due to
Lemma \ref{Lemma_marking}, \emph{unless} $\Gamma_\delta$ is $A_3$.
In the $A_3$ case however Theorem \ref{Thm_n_and_s} already
guarantees that $W'$ must be of $\bar00$ class. We have therefore
arrived at a contradiction.

Consideration of a vertex $V$ of $\delta'$ type and of $0\bar0$
class is done in absolutely the same way, comparing the lowest
possible powers of $\mu$. One then obtains that any other $\delta$
vertex $W$ of the graph $\Gamma_\delta$ which is
$\delta'$-adjacent to $V$ is of either $\bar00$ or $00$ class.

One can in fact do even better than that. We will argue that in
the setup introduced above the vertex $W'$ cannot be of $00$ class
provided that $W'\in\partial \Gamma_\delta$. Indeed, having
assumed the opposite, one sees by means of the same argument that
the weight of \emph{any} subgraph of $\mathcal G$ giving rise to
the highest power in $\mu$ on the r.h.s. of \eqref{eq_removal1}
\emph{must} contain $\tan l_{WW'}\mu$, whereas there clearly
exists a subgraph on the l.h.s. such that it does not. This
subgraph is nothing but the 2-loop of vertices $W$ and $W'$, built
up to a spanning subgraph of $\mathcal G$ in a way such that its
weight contains the maximal power of $\mu$ (i.e., by taking
alpha-loops at all graph vertices of $\delta'$ type with non-zero
coupling constants). By linear independence argument this leads to
a contradiction. Surely the same holds for a boundary vertex $W$
of $\delta$ type and of $00$ class.

It is then possible, having started with a graph $\Gamma_\delta$
having every edge of mixed type, to proceed as follows.

If there exists a $\delta'$ vertex $V'$ of $0\bar0$ class, start
with it. If there are no such vertices, find a $\delta$ vertex $V$
of the same class instead. If this also fails, one uses Theorem
\ref{Thm_uniqueness_zero}.

Starting with $V'$, consider all $\delta$-adjacent vertices. Those
of them that are internal might be of $00$ class; in this case
they are removed using Theorem \ref{Thm_00}. After this first
removal, a certain number of $\delta-\delta$ edges appears in the
graph, which we remove using Theorem \ref{thm_trimming}. The
procedure eventually leads to a graph $\Gamma_\delta^{(0)}$ which
has all $\delta'$ vertices of $0\bar0$ or $\bar00$ classes and by
construction has all its edges of mixed type.

Now consider every vertex $V$ of $\delta$ type to be found in the
graph $\Gamma_\delta^{(0)}$. Our first claim is that in order to
avoid a contradiction, it might be connected to at most 2
different vertices (of $\delta'$ type). This follows from the fact
obtained above that $\delta$-adjacent $\delta'$ type vertices have
to be of \emph{different classes}, i.e., if one of them is
$0\bar0$, the other one must be $\bar00$, in conjunction with the
absence of $00$ class vertices of $\delta'$ type. Moreover, a
simple argument based again on Lemma
\ref{Lemma_removal_of_difficult_edge} shows, that the valence of
$V$ must be equal to 2 (thus, multiple edges incident to it are
disallowed).

Therefore, unless we have already arrived at a contradiction, we
have a graph $\Gamma_\delta^{(0)}$ with all $\delta'$ vertices of
either $0\bar0$ or $\bar00$ class, ``connected'' by $\delta$
vertices of valence 2.

This graph must contain at least a single internal $\delta'$
vertex (otherwise the graph $\Gamma_\delta^{(0)}$ reduces to the
chain $A_3$, but then clearly $\Gamma_\delta$ was a tree; then if
$\Gamma_\delta\neq A_3$, at the penultimate step of reduction one
can apply Theorem \ref{Thm_00} to show that
$\alpha_V=\tilde\alpha_V$ at the only internal vertex of $\delta$
type, which prevents isospectrality by Example \ref{A3}). There
are two distinct possibilities: one either has no $\delta$ type
vertices of $0\bar0$ class or there exists at least one.

In the former case, one can remove any of $00$ classed $\delta$
vertices. We note that these vertices cannot be located on the
graph boundary since otherwise Theorem \ref{Thm_uniqueness_zero}
is applicable, leading to a contradiction. Removing an internal
$00$ vertex, one gets at least one edge $e$ connecting two
vertices of $\delta'$ type and of $0\bar0$ and $\bar00$ classes,
respectively. Theorem \ref{thm_trimming} yields that removing this
edge one gets a $\bar0\bar0$ classed $\delta'$ vertex which leads
to a contradiction by the argument above, provided that
$\Gamma_\delta^{(0)}$ had more than 2 vertices of $\delta'$ type.
If on the other hand there were just 2 in $\Gamma_\delta^{(0)}$,
one has to resort to Lemma \ref{Lemma_removal_of_difficult_edge}.
A straightforward but rather lengthy argument then leads to a
contradiction again.

Thus, one may assume that the graph $\Gamma_\delta^{(0)}$ contains
no $00$ classed vertices of $\delta$ type. One then applies the
analysis developed in the proof of Theorem
\ref{thm_uniqueness_nonzero} verbatim for an \emph{internal}
vertex of $\delta'$ type. The corresponding balancing equation
immediately yields a contradiction.

If on the other hand the graph does contain a $\delta$ type vertex
of $0\bar0$ class (and then another one of $\bar00$ class), this
again yields that any $\delta'$-adjacent $\delta$ type vertex is
of either $\bar00$ or $00$ class (the latter possibility being
disallowed for boundary vertices). The possibility of internal
$00$ class $\delta$ type vertices also leads to a contradiction.
Assume the opposite: let $V$ be a $\delta$ type vertex of valence
2 and of $00$ class. One first removes it by Remark
\ref{cleansing} and then uses Theorem \ref{thm_trimming} to get
rid of the resulting $\delta'-\delta'$ edge, giving rise to a
$\delta'$ type vertex of $\bar0\bar0$ class. If one has at least
one more $\delta$-adjacent $\delta'$ vertex of $\bar00$ or
$0\bar0$ class, this turns out to be impossible. In the remaining
case of \emph{just} two $\delta'$ vertices in
$\Gamma_\delta^{(0)}$ linked together by a $00$ class $\delta$
vertex, one gets to a contradiction by applying Lemma
\ref{Lemma_removal_of_difficult_edge}.

The graph $\Gamma_\delta^{(00)}$ thus obtained is clearly either a
chain or a simple cycle. In both cases, it has (i) all edges of
mixed type, and (ii) all vertices of either $0\bar0$ or $\bar00$
class.

In the case when $\Gamma_\delta^{(00)}$ is a chain, a
straightforward check based on Lemma
\ref{Lemma_removal_of_difficult_edge} again ensures a
contradiction.

In the case of a simple cycle (surely, conditions (i) and (ii)
tell us that this has to be a cycle of $4k$ vertices for natural
$k$) one has to use Theorem \ref{Thm_n_and_s}. Since this graph is
not a tree, Corollary \ref{cor_n_and_s} is applicable. A rather
straightforward analysis now shows, that in this case non-trivial
isospectral configurations are disallowed.

Ultimately, we sum up results obtained above in the form of the
following
\begin{thm}\label{Thm_bloody}
Assume that $\Gamma_\delta$ is a graph which is not a chain $A_2$
of exactly 2 vertices with each edge (except loops) of mixed type.
Assume that $A_{\vec\alpha}$ and $A_{\vec{\tilde\alpha}}$ are two
isospectral graph Laplacians on $\Gamma_\delta$. If the graph
contains no vertices $V$ of valence 2 such that\footnote{This
assumption is w.l.o.g. by Remark \ref{cleansing}.}
$\alpha_V=\tilde\alpha_V=0$, then either $\Gamma_\delta=A_3$ of
Example \ref{A3} \emph{or} $\vec\alpha=\vec{\tilde\alpha}$.
\end{thm}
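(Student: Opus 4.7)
The plan is to leverage the entire machinery built in Sections~3 and~4 and reduce the problem to a sequence of structural dichotomies organised by the class of each vertex $V$, where we classify $V$ as $00$, $0\bar 0$, $\bar 00$, or $\bar 0\bar 0$ according to whether $\alpha_V$ and $\tilde\alpha_V$ vanish. The overall strategy is to show that the only way an isospectral pair on a mixed-edge graph $\Gamma_\delta$ (with no valence-two $00$ vertices by the cleansing remark) can avoid the conclusion $\vec\alpha=\vec{\tilde\alpha}$ is when enough cancellation occurs, and to prove via residue comparison (Lemma~\ref{Lemma_removal_of_difficult_edge}) together with the balancing conditions of Theorem~\ref{thm_uniqueness_nonzero} that this forces $\Gamma_\delta=A_3$.

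First, I would dispose of the easy regime: if no vertex of $\Gamma_\delta$ is of class $0\bar 0$ or $\bar 00$, then every vertex is $00$ or $\bar 0\bar 0$. Theorem~\ref{Thm_uniqueness_zero} immediately yields $\alpha_W=\tilde\alpha_W$ at every neighbour $W$ of any $00$ vertex. Iteratively applying the admissible trimmings of Theorem~\ref{thm_trimming} (and Theorem~\ref{Thm_00} for mixed edges adjacent to $00$ vertices) to excise all $00$ vertices leaves a reduced graph on which every vertex is $\bar 0\bar 0$, and Theorem~\ref{thm_uniqueness_nonzero} forces equality of all surviving coupling constants, hence of the original ones.

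Second, I would handle the hard regime, where at least one vertex is of class $0\bar 0$ or $\bar 00$. The key lemma to extract is that, in the mixed-edge setting, a $\delta'$ vertex $V'$ of class $0\bar 0$ can have no $\delta$-adjacent $\delta'$ vertex $W'$ of class $\bar 0\bar 0$ (and, if $W'$ is boundary, it cannot be $00$ either). This is the residue argument sketched in the excerpt: removing an incident mixed edge via Lemma~\ref{Lemma_removal_of_difficult_edge}, one compares the coefficient of the topmost power of $\mu$ on the two sides of \eqref{eq_removal1}, expanding as in \eqref{eq_det_decomposition}, and invokes Lemma~\ref{Lemma_marking} to produce a $\tan l_{WW'}\mu$ factor on one side which cannot be matched on the other unless $\Gamma_\delta=A_3$. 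Symmetrically for $0\bar 0$ class $\delta$ vertices via the lowest powers of $\mu$. Combining Theorems~\ref{Thm_00} and~\ref{thm_trimming} with this structural restriction, I would trim $\Gamma_\delta$ down to a graph $\Gamma_\delta^{(0)}$ in which every $\delta'$ vertex is $0\bar 0$ or $\bar 00$ and every edge is of mixed type.

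Finally, I would show that $\Gamma_\delta^{(0)}$ is forced to be either a chain or a simple cycle in which $\delta$ vertices all have valence two and alternate with $\delta'$ vertices of alternating $0\bar 0/\bar 00$ classes: valence-three-or-higher $\delta$ vertices are excluded by another application of Lemma~\ref{Lemma_removal_of_difficult_edge}, and any internal $00$ $\delta$ vertex can be removed by Remark~\ref{cleansing}, after which Theorem~\ref{thm_trimming} generates a $\delta'$ vertex of $\bar 0\bar 0$ class, contradicting the previous step (or, in the degenerate two-$\delta'$-vertex subcase, producing a direct contradiction from a more delicate highest/lowest power analysis in Lemma~\ref{Lemma_removal_of_difficult_edge}). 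For $\Gamma_\delta^{(0)}$ a chain, the residue computation again yields contradiction unless we are precisely in the $A_3$ configuration of Example~\ref{A3}; for a simple cycle (necessarily of length $4k$ by the class-alternation constraint), Corollary~\ref{cor_n_and_s} applies since the graph is not a tree and rules out any non-trivial isospectral pairing. The main obstacle I expect is the bookkeeping in the degenerate subcases where $\Gamma_\delta^{(0)}$ has very few $\delta'$ vertices, because there the crude topmost-power argument is insufficient and one must track several orders of the $\mu$-expansion in \eqref{eq_removal1} simultaneously; in particular, disentangling the genuine $A_3$ configuration from its near-neighbours requires a careful matching of specific trigonometric monomials rather than just powers of $\mu$.
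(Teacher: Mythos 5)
Your proposal follows the paper's own argument essentially step for step: the same class dichotomy ($00$, $0\bar0$, $\bar00$, $\bar0\bar0$), the same reduction via Theorems \ref{Thm_uniqueness_zero}, \ref{thm_trimming} and \ref{Thm_00}, the same residue/highest-power comparison through Lemma \ref{Lemma_removal_of_difficult_edge} and \eqref{eq_det_decomposition} to constrain $\delta$-adjacent $\delta'$ vertices, the same trimming to $\Gamma_\delta^{(0)}$ with valence-two $\delta$ vertices, and the same endgame (chain forces $A_3$; a $4k$-cycle is excluded via Corollary \ref{cor_n_and_s}). The approach is correct and is the one taken in the paper.
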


\subsection*{Acknowledgements} The authors express deep gratitude
to Prof. Sergey Naboko  for constant attention to their work. We
would also like to cordially thank our referees for making some
very helpful comments.

\end{document}